\documentclass[12pt, oneside, psamsfonts]{amsart}

\newif\ifPDF
\ifx\pdfoutput\undefined\PDFfalse
\else \ifnum \pdfoutput > 0 \PDFtrue
        \else \PDFfalse
        \fi
\fi

\usepackage[centertags]{amsmath}
\usepackage{amsfonts}
\usepackage{mathrsfs}
\usepackage{textcomp}
\usepackage{amssymb}
\usepackage{amsthm}
\usepackage{newlfont}
\usepackage[all]{xy}

\ifPDF
  \usepackage[pdftex]{color, graphicx}
  \usepackage[pdftex, bookmarks, colorlinks]{hyperref}
  \hypersetup{colorlinks=false}

\else
  \usepackage{color}
  \usepackage[dvips]{graphicx}
  \usepackage[dvips]{hyperref}
\fi

\usepackage[scale=0.8]{geometry}

\newtheorem{thm}{Theorem}[section]
\newtheorem{cor}[thm]{Corollary}
\newtheorem{lem}[thm]{Lemma}
\newtheorem{prop}[thm]{Proposition}
\theoremstyle{definition}
\newtheorem{defn}[thm]{Definition}
\theoremstyle{remark}
\newtheorem{rem}[thm]{Remark}

\numberwithin{equation}{section}

\newcommand{\norm}[1]{\left\Vert#1\right\Vert}
\newcommand{\abs}[1]{\left\vert#1\right\vert}

\newcommand{\Real}{\mathbb R}
\newcommand{\Int}{\mathbb Z}
\newcommand{\Comp}{\mathbb C}

\newcommand{\eps}{\varepsilon}

\newcommand{\Kzero}{\textrm{K}_0}

\newcommand{\tr}{\mathrm{T}}

\begin{document}

\title[Strict comparison in uniform Roe algebra]{Strict comparison holds in the uniform Roe algebra of a discrete amenable group}

\author{George A. Elliott}
\address{Department of Mathematics, University of Toronto, Toronto, Ontario, Canada~\ M5S 2E4}
\email{elliott@math.toronto.edu}

\author{Chun Guang Li}
\address{
School of Mathematics and Statistics, Northeast Normal University, Changchun, 130024, P. R.~China
}
\email{licg864@nenu.edu.cn}

\author{Zhuang Niu}
\address{Department of Mathematics and Statistics, University of Wyoming, Laramie, Wyoming, USA, 82071}
\email{zhuangniu@icloud.com}

\author{Jianguo Zhang}
\address{
School of Mathematics and Statistics, Shaanxi Normal University, Xi'an, Shaanxi, P. R.~China
}
\email{jgzhang@snnu.edu.cn}

\thanks{G.A.E.~ is supported by an NSERC Discovery Grant. Z.N.~is supported by a Simons Foundation Grant (MP-TSM-00002606). J.Z.~is supported by NSFC grants (Nos.~12271165, 12301154). Part of the work  was carried out when Z.N.~visited the Fields Institute during the spring break of 2026. He thanks the Fields Institute and FOCUS Program for the support.}
\keywords{Uniform Roe algebra, amenable group, universal minimal set, strict comparison}
\date{\today}
%\dedicatory{}
%\commby{}

%------------------------------------------abstract--------------------------------------
\begin{abstract}
Let $\Gamma$ be a countable discrete amenable group, and let $A=l^\infty(\Gamma) \rtimes \Gamma$. It is shown that if $a, b \in A \otimes \mathcal K$ are positive elements such that $$\mathrm{d}_\tau(a) < \mathrm{d}_\tau(b),\quad \tau \in \tr(A),$$ then $a$ is Cuntz subequivalent to $b$. 

Moreover, consider the universal minimal set $(M, \Gamma)$.
%
%or $A = \mathrm{C}(M) \rtimes \Gamma$, where $(M, \Gamma)$ is the universal minimal set of $\Gamma$
The simple C*-algebra $\mathrm{C}(M)\rtimes\Gamma$ is shown to be AH in the strong sense that there is an increasing net of unital sub-C*-algebras $A_\lambda \subseteq A$, $\lambda \in \Lambda$, such that each $A_\lambda$ is a simple (separable) $\mathcal Z$-absorbing approximately homogeneous C*-algebra with real rank zero and $A = \bigcup_{\lambda \in \Lambda} A_\lambda$. In particular, $\mathrm{C}(M)\rtimes\Gamma$ is approximately divisible.
 
\end{abstract}

\maketitle

\section{Introduction}

The uniform algebra of a discrete metric space was introduced by John Roe in \cite{Roe-I} and \cite{Roe-II} in connection with index theory. In the case the space is a discrete group $\Gamma$, the uniform Roe algebra was shown in \cite{Yu_1995} to be isomorphic to the reduced crossed product C*-algebra $l^\infty(\Gamma) \rtimes_{\mathrm r} \Gamma$, where the action of $\Gamma$ on $l^\infty(\Gamma)$ is induced by translation. 

The C*-regularity properties of the uniform Roe algebra have been found to be closely related to the asymptotic dimension of the underlying space and dynamics (see \cite{Yu_1998}, \cite{WZ-ndim}, \cite{Li_2018}, \cite{LW-Roe}, \cite{Wei-Roe}, \cite{Scarparo-Roe}, \cite{R_RDAM_2011}, \cite{ES-Roe}, \cite{GYW_2024}, etc.). In this paper, we shall focus on the group case, and so on  the crossed product  C*-algebra $l^\infty(\Gamma) \rtimes\Gamma$, where $\Gamma$ is a countable discrete  amenable group. We shall show that this C*-algebra (recall that the full and reduced crossed products are the same for an amenable group) has the strict comparison property: 
\theoremstyle{theorem}
\newtheorem{thmA}[thm]{Theorem}
\begin{thmA}[Theorem \ref{cp-gp}]
Let $\Gamma$ be a countable discrete amenable group, and let $A = l^\infty(\Gamma) \rtimes \Gamma$. If $a, b \in A \otimes \mathcal K$ are positive elements such that $$\mathrm{d}_\tau(a) < \mathrm{d}_\tau(b),\quad \tau \in \tr(A),$$ then $a$ is Cuntz subequivalent to $b$.
\end{thmA}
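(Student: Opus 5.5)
Throughout, write $A=l^\infty(\Gamma)\rtimes\Gamma$. The plan is to pass from $A$ to crossed products by minimal (in fact universal minimal) actions, where the abstract suggests AH-type structure and hence strict comparison, and then glue back using the structure of $l^\infty(\Gamma)=\mathrm C(\beta\Gamma)$.

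First, we identify $l^\infty(\Gamma)=\mathrm C(\beta\Gamma)$, so that $A=\mathrm C(\beta\Gamma)\rtimes\Gamma$. Every trace on $A$ restricts to a $\Gamma$-invariant probability measure on $\beta\Gamma$. Every invariant measure is supported on the closure of the union of the minimal subsets, and each minimal subset is a copy of the universal minimal flow $M$.

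Second, we establish strict comparison for the simple algebra $B=\mathrm C(M)\rtimes\Gamma$. For this we show that $B$ is an increasing union of simple $\mathcal Z$-absorbing AH algebras $A_\lambda$ of real rank zero. The construction uses the fact that $M$ is extremally disconnected, which supplies many clopen sets. Combined with amenability, this lets us build Rokhlin-type towers: clopen partitions of $M$ indexed by Følner sets, with arbitrarily small error measured uniformly in invariant measures. Each $A_\lambda$ is separable and $\mathcal Z$-stable, so it has strict comparison by R\o rdam. Strict comparison then passes to the inductive limit, because traces on $B$ restrict to traces on $A_\lambda$. A compactness argument upgrades $\mathrm d_\tau(a)<\mathrm d_\tau(b)$ on $\tr(B)$ to $\mathrm d_\tau((a-\eps)_+)<\mathrm d_\tau(b)$ on $\tr(A_\lambda)$, for some $\lambda$ with $a$ and $b$ approximately in $A_\lambda$.

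Third, we pass back to $A$. Given $a,b$ with $\mathrm d_\tau(a)<\mathrm d_\tau(b)$ for all $\tau\in\tr(A)$, fix $\eps>0$. By lower semicontinuity of $\tau\mapsto\mathrm d_\tau(b)$ and compactness of $\tr(A)$, we get a uniform gap: there are $\delta>0$ and $\eps'>0$ with
\begin{equation*}
\mathrm d_\tau((a-\eps)_+)+\delta<\tau(f_{\eps'}(b)),\qquad \tau\in\tr(A).
\end{equation*}
We then approximate $a$ and $b$ by elements of a subalgebra $\mathrm C(X)\rtimes\Gamma$ with $X$ a metrizable quotient flow of $\beta\Gamma$. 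Cutting $X$ down by clopen pieces of $\beta\Gamma$, we decompose the space into two parts. Near the minimal part, the comparison comes from the second step. Away from it, we use a \emph{paradoxical} or \emph{small-boundary} argument: there, orbits are far from any invariant measure, and so every positive element is dominated via Følner/Ornstein--Weiss quasitiling with clopen towers (again using extremal disconnectedness of $\beta\Gamma$). The pieces are then reassembled using orthogonality of the corresponding projections, giving $(a-\eps)_+\precsim b$. Since $\eps>0$ was arbitrary, $a\precsim b$.

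The main obstacle is this last assembly. Traces on $A$ see only the invariant part, yet $a$ may live substantially on the non-minimal part of $\beta\Gamma$. On that part we need a uniform ``almost divisibility / comparison by towers'' statement: a dynamical comparison for the action of $\Gamma$ on $\beta\Gamma$ relative to invariant measures. This should be obtained by quasitiling $\Gamma$ with Følner sets and using that subsets of $\Gamma$ of small upper Banach density can be equivariantly injected into sets of larger density. I would first try to prove this dynamical comparison directly on $\beta\Gamma$, namely for clopen sets $E,F$ with $\mu(E)<\mu(F)$ for all invariant $\mu$. I would then lift it to Cuntz comparison via the standard argument for crossed products with dynamical comparison and clopen bases.
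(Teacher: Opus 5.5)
\textbf{There is a genuine gap, and it starts in your first step.} You claim that every invariant measure on $\beta\Gamma$ is supported on the closure of the union of the minimal subsets. This is false, and your third step depends on it.

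Take $\Gamma=\Int$ and let $Q$ be the set of integers with squarefree absolute value. Averaging over $[-n,n]$ and taking a weak$^*$ limit gives an invariant mean $\mu$ with $\mu(\overline{Q})=6/\pi^2$. By the Chinese remainder theorem, for every $L$ there is a $P$ such that every interval of length $P+L$ contains $L$ consecutive integers outside $Q$. So $Q$ is not piecewise syndetic. Hence the clopen set $\overline{Q}\subseteq\beta\Int$ misses every minimal subset, and therefore misses the closure of their union.

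So the region ``away from the minimal part'' can carry invariant mass, and no paradoxical domination is available there. Even setting this aside, your second step does not feed into the third. Strict comparison in one quotient $\mathrm C(M)\rtimes\Gamma$ only sees traces supported on that copy of $M$, and $\beta\Gamma$ contains many disjoint copies. Cuntz subequivalence in a quotient also does not lift to $A$. In addition, you assert the $\mathcal Z$-stability of the $A_\lambda$ without proof. In the paper it is deduced \emph{from} strict comparison, so using it here would be circular.

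Your last paragraph is the right idea. Done globally on $\beta\Gamma$, it makes the detour through minimal sets unnecessary. Comparison holds on $\beta\Gamma$ against \emph{all} invariant measures. Exact F{\o}lner tilings (Downarowicz--Huczek--Zhang), not just Ornstein--Weiss quasitilings, give clopen Rokhlin towers whose levels partition $\beta\Gamma$. A weak$^*$-limit argument then turns $\mu(E)<\mu(F)$ for all $\mu$ into a pointwise density inequality on every sufficiently invariant tile.

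However, the lift you call ``standard'' is where the real work is, because $A$ is neither simple nor separable and $a,b$ are arbitrary positive elements. You need four ingredients.
\begin{enumerate}
\item The tower partition gives $\mathrm C(\beta\Gamma)\subseteq C\cong\bigoplus_s\mathrm M_{|\Gamma_s|}(\mathrm C(Z_s))$, with $\mathrm C(\beta\Gamma)$ as the diagonal. It also gives a cut-down $h\in\mathrm C(\beta\Gamma)$ that almost commutes with algebraic approximants $a',b'$ of $a,b$, compresses them into $C$, and has $\mathrm d_\tau(1-h)$ uniformly small.
\item Any $b$ with $\tau(b)>0$ for all $\tau$ dominates a projection of $\mathrm C(\beta\Gamma)$ with small but strictly positive trace. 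This absorbs $1-h$ via (COS). It also shows, again via (COS) and small full base projections, that such $b$ are full, which replaces simplicity.
\item Fullness gives R{\o}rdam's $a\otimes 1_{N+1}\precsim\tilde b\otimes 1_N$. You then push this relation into $C$, where zero-dimensionality of the $Z_s$ gives comparison by rank.
\item Matrices over the non-simple $A$ need a separate device. The paper uses $\mathrm M_n(l^\infty(\Gamma)\rtimes\Gamma)\cong l^\infty(\Gamma\times\Int/n\Int)\rtimes(\Gamma\times\Int/n\Int)$.
\end{enumerate}
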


%Note that in the case that $\Gamma$ is not amenable, it was shown in \cite{} (Corollary 5.6) that every projection of $l^\infty(\Gamma)$ which is full in $A:=l^\infty(\Gamma) \rtimes \Gamma$ is properly infinite in the sense that $a \oplus a \precsim a$. 

To show the strict comparison stated above, we consider the topological dynamical system $(\widehat{l^\infty(\Gamma)},  \Gamma)$, where $\widehat{l^\infty(\Gamma)}$ is the spectrum of the commutative C*-algebra $l^\infty(\Gamma)$ (which is homeomorphic to $\beta\Gamma$, the Stone-\v{C}ech compactification of $\Gamma$), and consider the uniform Rohklin property, URP,  and the relative comparison property, (COS), which were studied in \cite{Niu-MD-Z}. These two properties together imply in general that the crossed-product C*-algebra can be weakly tracially approximated by the sub-C*-algebras from the Rohklin towers, which are homogeneous C*-algebras (see \cite{Niu-MD-Z}). This pair of  properties is known to hold for arbitrary free and minimal $\Int^d$-actions and for extensions of a free action of a group with subexponential growth on the Cantor set (\cite{Niu-MD-Zd}, \cite{Niu-MD-Z}). %, and then the comparison property of the crossed-product C*-algebra can be reduced to the comparison property of 

We shall observe that the URP and property (COS) also hold for $(\beta\Gamma,  \Gamma)$. Since $\Gamma$ is amenable, by \cite{DHZ-tiling}, $\Gamma$ has (exact) tilings by F{\o}lner sets. This in fact implies that the dynamical system $(\beta\Gamma, \Gamma)$ has a strong version of the URP in the sense that the Rohklin towers form a partition of unity (Proposition \ref{S-URP}). It then follows that the crossed-product C*-algebra $l^\infty(\Gamma) \rtimes \Gamma$ can be weakly tracially approximated by unital homogeneous C*-algebras (with zero-dimensional base spaces) (Proposition \ref{urp}). This strong version of the URP also implies the property (COS) for the sub-C*-algebra $l^\infty(\Gamma)$ inside $l^\infty(\Gamma) \rtimes \Gamma$. Once the URP and property (COS) are established, strict comparison  for $l^\infty(\Gamma) \rtimes \Gamma$ then follows from an argument similar to that for Theorem 4.8 of \cite{Niu-MD-Z}.

The C*-algebra $l^\infty(\Gamma) \rtimes \Gamma$ is not simple if $\Gamma$ is an infinite amenable group. In fact, by \cite{Chou_1969}, the dynamical system $(\beta\Gamma, \Gamma)$ has at least $2^c$ minimal (non-empty) closed invariant subsets. Moreover, by \cite{Ellis_1960}, all of these minimal subsets are  universal (and hence are isomorphic). Note that since the action of $\Gamma$ on $\beta\Gamma$, and hence on these minimal subsets, is free, by \cite{Elliott_1980} (Theorem 3.2 and Remark 3.7), the reduced crossed products are simple. Since $\Gamma$ is amenable, so are the full crossed products, and hence these are just the crossed products arising as the quotients of $l^\infty(\Gamma) \rtimes \Gamma$. In particular, all of the (non-zero) simple quotients of the C*-algebra $l^\infty(\Gamma) \rtimes \Gamma$ are isomorphic to $\mathrm{C}(M) \rtimes \Gamma$, where $(M, \Gamma)$ is the universal minimal set of $\Gamma$. 

Note that  the URP of $(\beta\Gamma, \Gamma)$ carries over to the subsystem $(M, \Gamma)$, and hence $\mathrm{C}(M) \rtimes \Gamma$ also has strict comparison (a different proof of this was indicated by Suzuki in \cite{Suzuki-sr1}, and also of the properties of stable rank one and real rank zero of $\mathrm{C}(M) \rtimes \Gamma$; see Corollary 1.3 and Remark 4.3 there). 
In fact, using the classification theorem, one actually can show that this simple C*-algebra is an increasing union of separable simple $\mathcal Z$-absorbing AH algebras with real rank zero. In particular, it is approximately divisible:
\theoremstyle{theorem}
\newtheorem{thmB}[thm]{Theorem}
\begin{thmB}[Theorem \ref{min-alg} and Corollary \ref{AH-structure}]
Let $\Gamma$ be a countable discrete amenable group, and denote by $(M, \Gamma)$ the universal minimal set of $\Gamma$. Then there is an increasing net $A_\lambda$, $\lambda \in \Lambda$, of sub-C*-algebras of $A: = \mathrm{C}(M) \rtimes \Gamma$ such that 
each $A_\lambda$, $\lambda \in \Lambda$, is a separable simple $\mathcal Z$-absorbing AH algebra with real rank zero, and $$\bigcup_{\lambda \in \Lambda} A_\lambda  = A. $$
In particular, $A$ is approximately divisible.
\end{thmB}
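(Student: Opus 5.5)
Theorem B concerns $A=\mathrm{C}(M)\rtimes\Gamma$ with $M$ non-metrizable, so $A$ is not separable. The plan is to realize $A$ as an increasing union of separable crossed products $\mathrm{C}(X_\lambda)\rtimes\Gamma$, where $(X_\lambda,\Gamma)$ are metrizable factors of $(M,\Gamma)$, and to show that each such piece falls under the classification theorem. For a finite set of continuous functions on $M$ together with the countable group $\Gamma$, the $\Gamma$-invariant unital C*-subalgebra of $\mathrm{C}(M)$ they generate is separable. Its spectrum $X$ is a metrizable factor of $M$. Since $M$ is minimal, $(X,\Gamma)$ is minimal, and $\mathrm{C}(X)\rtimes\Gamma$ embeds unitally in $A$. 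Taking $\Lambda$ to be the directed set of separable $\Gamma$-invariant unital subalgebras of $\mathrm{C}(M)$, we get $\bigcup_\lambda \mathrm{C}(X_\lambda)\rtimes\Gamma$ dense in $A$, and in fact equal to $A$, since any element of $A$ is a norm limit involving only countably many coefficients.

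The key point is to enlarge each separable subalgebra so that the factor $(X_\lambda,\Gamma)$ is free, has zero-dimensional $X_\lambda$, and carries the strong URP with clopen towers of Proposition \ref{S-URP}. Since $M\subseteq\beta\Gamma$ is extremally disconnected, clopen sets are plentiful. The tilings of $\Gamma$ by F{\o}lner sets from \cite{DHZ-tiling}, restricted from $\beta\Gamma$ to $M$, give for each F{\o}lner scale $(F,\eps)$ a partition of $M$ into finitely many clopen Rokhlin towers with F{\o}lner shapes. Freeness can be forced at the same time: for each $g\neq e$, pick a clopen partition of $M$ into finitely many pieces $U$ with $gU\cap U=\emptyset$. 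Such a partition exists because the action on $M$ is free, $M$ is compact, and $M$ is zero-dimensional.

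I would then do a countable closing-off. Adjoin to the current algebra the indicator functions of all tower levels at a sequence of scales $(F_n,1/n)$, together with the freeness partitions for all $g\in\Gamma$. Close up under $\Gamma$ and repeat countably many times. The result is a separable $\Gamma$-invariant algebra generated by projections, so $X_\lambda$ is a Cantor space or at least zero-dimensional. The action on $X_\lambda$ is free and minimal, and it has the strong URP with towers partitioning $X_\lambda$. Such subalgebras are cofinal in $\Lambda$, so we restrict to them.

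For such $X_\lambda$, I would argue as in Proposition \ref{urp} and the comparison argument of Theorem \ref{cp-gp} applied to the minimal system. This shows that $A_\lambda=\mathrm{C}(X_\lambda)\rtimes\Gamma$ is tracially approximated by homogeneous algebras over zero-dimensional spaces, and that it has strict comparison. By Niu's results \cite{Niu-MD-Z}, the URP together with (COS) gives mean dimension zero and $\mathcal Z$-stability. Hence $A_\lambda$ is simple, separable, unital, nuclear, $\mathcal Z$-stable and in the UCT class, since $\Gamma$ is amenable and the algebra is a groupoid C*-algebra (Tu). By classification, $A_\lambda$ is an AH algebra, indeed ASH and hence AH given its invariant.

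Real rank zero requires the projections to separate traces, i.e. $\mathrm K_0$ to have dense image in $\mathrm{Aff}(\mathrm T(A_\lambda))$. This should follow from the clopen towers. The towers are clopen and partition $X_\lambda$, and their shapes are F{\o}lner with level sizes dividing appropriately. So the tower projections realize any affine function up to $\eps$, exactly as in the proof that weak tracial approximation by matrix algebras over zero-dimensional spaces yields real rank zero. Finally, each simple $\mathcal Z$-stable AH algebra of real rank zero is approximately divisible, and approximate divisibility passes to inductive limits of nets. I expect the main obstacle to be verifying (COS) and the UCT for the separable factors, and the trace-density needed for real rank zero; the closing-off construction is designed to make both inherit from $M$.
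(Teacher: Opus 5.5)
\textbf{There is a genuine gap: your closing-off step never makes $X_\lambda$ zero-dimensional.} Your overall architecture is the paper's: enlarge separable $\Gamma$-invariant subalgebras of $\mathrm{C}(M)$ until the factor has the strong URP, then use strict comparison, Niu's $\mathcal Z$-stability theorem, density of $\mathrm{K}_0$, R{\o}rdam's real rank criterion and classification. The problem is in how you enlarge. You start from an arbitrary separable $\Gamma$-invariant $D\subseteq \mathrm{C}(M)$ and adjoin tower levels and freeness partitions. That family does not depend on the current algebra, so repeating the step achieves nothing. The outcome is the $\Gamma$-invariant C*-algebra generated by $D$ and some projections, and this need not be generated by projections, so its spectrum need not be totally disconnected.

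A concrete failure: let $\Gamma=\mathbb Z$, and fix a factor map from $M$ onto the minimal system $\mathbb T\times Z$ (an irrational rotation times an odometer). Let $D$ be the pull-back of $\mathrm{C}(\mathbb T)$. Take all tower levels and freeness partitions pulled back from clopen subsets of $Z$; this is a legitimate choice, since odometer towers have F{\o}lner shapes and the odometer action is free. Your procedure then ends with $X_\lambda=\mathbb T\times Z'$ for a factor $Z'$ of $Z$, which is not zero-dimensional. Zero-dimensionality is used in every later step: Lemma \ref{matrix-C-0} inside Propositions \ref{cos} and \ref{main-thm}, zero mean dimension for Niu's theorem, and the construction of projections approximating affine functions for real rank zero.

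\textbf{The repair} is the paper's Lemma \ref{sep-subalgebra}, and this is what a countable iteration is actually needed for. At stage $n$, take a countable dense subset of the current separable algebra $C_n$. Approximate each of its elements to within $1/k$, $k\in\mathbb N$, by finite linear combinations of indicators of clopen subsets of $M$; this is possible because $M$ is zero-dimensional. Let $C_{n+1}$ be the $\Gamma$-invariant C*-algebra generated by $C_n$ and these indicators. Then each $C_n$ lies in the closed linear span of the projections of $C_{n+1}$. Hence $\overline{\bigcup_n C_n}$ is generated by projections and has totally disconnected spectrum. The tower bases and freeness partitions can be put in at stage $0$.

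\textbf{Comparison with the paper, once repaired.} With this fix your argument works, and it differs from the paper in two useful ways. First, forcing honest freeness lets you quote Niu's theorem for free minimal metrizable systems as stated. The paper only obtains topological freeness (Remark \ref{tp-free}) and has to argue that the freeness hypothesis of Theorem 4.8 of \cite{Niu-MD-Z-absorbing} is inessential given Proposition \ref{main-thm}. Second, indexing by the directed set of such subalgebras gives the increasing net directly, replacing the induction in Corollary \ref{AH-structure}.

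\textbf{Smaller points.}
(COS) is not an obstacle: it is Proposition \ref{cos} applied to $(X_\lambda,\Gamma)$.
Zero mean dimension comes from zero-dimensionality, not from URP plus (COS).
R{\o}rdam's criterion needs the image of $\mathrm{K}_0(A_\lambda)$ to be dense in $\mathrm{Aff}(\mathrm{T}(A_\lambda))$. This is strictly stronger than projections separating traces, though your tower argument, as in Lemma \ref{dense-range}, does give density.
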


%The strict comparison of $\mathrm{C}(M)\rtimes \Gamma$ actually was point out in \cite{}, as well as the stable rank and real rank.

%\subsection*{Acknowledgements} %The research of the first named author was supported by a Natural Sciences and Engineering Research Council of Canada (NSERC) Discovery Grant, the research of the third named author was supported by a Simons Foundation grant (MP-TSM-00002606), and the research of the forth named author was supported 
%
%Part of the research was carried out when the third named author visited the Fields Institute during the spring break of 2026. He thanks the Fields Institute for the hospitality.

% It worth pointing out that, in general, unlike the case of simple C*-algebras, there are positive elements $a, b \in l^\infty(\Gamma) \rtimes\Gamma$ such that $a$ is Cuntz subequivalent to $b$, $a$ is not Cuntz equivalent to $b$, but $\mathrm d_\tau(a) = \mathrm d_\tau(b)$ for all $\tau \in \mathrm{T}(A)$. So the strict order of the Cuntz semigroup of $l^\infty(\Gamma) \rtimes \Gamma$ is still not fully determined by the traces of the C*-algebra.

%since $l^\infty(\Gamma) \rtimes \Gamma$ is not simple, 

\section{Traces and comparison}

%Let us start with some preliminaries on the traces and comparison.
\subsection{Traces}
A tracial state of a C*-algebra $A$ is a positive linear function $\tau: A \to \Comp$ such that $\norm{\tau} = 1$ and $$\tau(ab) = \tau(ba), \quad a, b \in A.$$ The set of all tracial states of $A$ is denoted by $\tr(A)$. It is a Choquet simplex if $A$ is unital.

Let $\Gamma$ be a discrete group. Then $\Gamma$ acts on itself by  (right) multiplication, and hence acts on the C*-algebra $l^\infty(\Gamma)$ (from the left). A (right) invariant mean of $\Gamma$ is a positive linear functional $\rho: l^\infty(\Gamma) \to \Comp$ such that $\norm{\rho} = 1$ and $$ \rho(\gamma \phi) = \rho(\phi), \quad \phi \in l^\infty(\Gamma),\ \gamma \in \Gamma.$$ The group $\Gamma$ is said to be amenable if it has invariant means.

Note that the restriction of a tracial state of the reduced crossed product C*-algebra $l^\infty(\Gamma)\rtimes_\mathrm{r}\Gamma$ to $l^\infty(\Gamma)$ is an invariant mean of $\Gamma$. Also note that any invariant mean of $\Gamma$ can be extended to a trace on $l^\infty(\Gamma) \rtimes_\mathrm{r} \Gamma$. Hence, the C*-algebra $l^\infty(\Gamma) \rtimes_\mathrm{r} \Gamma$ has tracial states if, and only if, $\Gamma$ is amenable.

\subsection{Cuntz subequivalence and comparison of positive elements by traces}

Let $A$ be a C*-algebra and let $a, b \in A$ be positive elements. Then $a$ is said to be Cuntz subequivalent to $b$, denote it by $a \precsim b$, if there is a sequence $(x_n) \subseteq A$ such that
$$\lim_{n \to\infty} x_n^* b x_n = a.$$

For any $\eps>0$, consider the element $(a - \eps)_+ = f(a)$, where $f(t) = \max\{t - \eps, 0\}$, $t \in \Real$. Note that $\norm{a - (a-\eps)_+} \leq \eps$. 
The following lemma will be used freely.
\begin{lem}[\cite{RorUHF2}]\label{sub-lem-Cu}
Let $A$ be a C*-algebra, and let $a, b \in A$ be positive elements. Then $a \precsim b$ if and only if $(a -\eps)_+ \precsim b$ for all $\eps>0$. 
\end{lem}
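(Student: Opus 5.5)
The statement to prove is Lemma \ref{sub-lem-Cu}: $a \precsim b$ if and only if $(a-\eps)_+ \precsim b$ for every $\eps>0$. The plan is to treat the two directions separately. The forward direction rests on the standard perturbation fact: if $\norm{a - c} < \eps$ for positive $a,c$, then $(a-\eps)_+ \precsim c$. The converse is a diagonal argument using $\norm{a - (a-\eps)_+}\le\eps$.

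For ($\Leftarrow$), assume $(a-\eps)_+ \precsim b$ for all $\eps>0$. Given $n$, set $\eps = 1/(2n)$ and choose $x_n \in A$ with $\norm{x_n^* b x_n - (a-1/(2n))_+} < 1/(2n)$, which the definition of $\precsim$ allows. Then
$$\norm{x_n^* b x_n - a} \le \frac{1}{2n} + \norm{(a-1/(2n))_+ - a} \le \frac{1}{n}.$$
So $x_n^* b x_n \to a$, that is, $a \precsim b$.

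For ($\Rightarrow$), first note that $(a-\eps)_+ \precsim a$. Indeed, $(a-\eps)_+ = g(a)\,a\,g(a)$ with $g(t) = \sqrt{f(t)/t}$ for $t>0$ and $g(0)=0$, where $f(t)=\max\{t-\eps,0\}$. The function $g$ is continuous on the spectrum because it vanishes on $[0,\eps]$. By transitivity of $\precsim$ (a routine norm estimate on composed sequences), it therefore suffices to show $(a-\eps)_+ \precsim b$ directly.

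Choose $x$ with $\norm{x^*bx - a} < \eps$ and put $c = x^*bx$. The key step is R{\o}rdam's perturbation lemma: if $\norm{a-c}<\eps$, then there is a contraction $d$ with $(a-\eps)_+ = d^* c\, d$. To get it, write $a \le c + \eps' 1$ in the unitization, where $\eps' = \norm{a-c} < \eps$. Then $(a-\eps)_+ \le h(a)^{1/2}(c + \eps')h(a)^{1/2}$ for a suitable function $h$ of $a$ supported where $a \ge \eps$, on which $a - \eps' \ge \eps - \eps' > 0$. This yields $(a-\eps)_+ \le r^* c\, r$ for some $r$, and one then uses the standard fact that $p \le q$ implies $p = z^* q z$ for some $z$ when $p$ is of the form $(\cdot-\delta)_+$, obtained by a polar-decomposition-type limit. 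Hence $(a-\eps)_+ = (x r z)^* b\,(x r z)$, so $(a-\eps)_+ \precsim b$.

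I expect the main obstacle to be this last functional-calculus step: producing the exact factorisation $(a-\eps)_+ = d^*cd$ from the operator inequality $a \le c+\eps'$. I would follow R{\o}rdam's argument, using $y_n = c^{1/2}\bigl(c + 1/n\bigr)^{-1/2} \cdots$ style approximate factors and showing they converge in norm on the range of $(a-\eps)_+$.
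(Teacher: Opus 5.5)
Your proof is correct, and shorter than you make it. The direction ($\Leftarrow$) is fine as written. For ($\Rightarrow$) you are finished as soon as you have $(a-\eps)_+=g(a)\,a\,g(a)$. If $x_n^*bx_n\to a$, then $(x_ng(a))^*b\,(x_ng(a))=g(a)\,x_n^*bx_n\,g(a)\to g(a)\,a\,g(a)=(a-\eps)_+$, which is exactly $(a-\eps)_+\precsim b$.

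So the sentence ``it therefore suffices to show $(a-\eps)_+\precsim b$ directly'' is a non sequitur: that is the conclusion itself, which you have just obtained. Everything after it, including what you call the main obstacle, should be deleted. The paper gives no proof of its own and only cites R{\o}rdam. For the equivalence as stated, with the sequential definition of $\precsim$ the paper uses, your two elementary arguments are all that is needed. The exact factorisation $(a-\eps)_+=d^*cd$ from $\norm{a-c}<\eps$ (Kirchberg--R{\o}rdam) matters only for sharper forms, such as $(a-\eps)_+\precsim(b-\delta)_+$ for some $\delta>0$, which this lemma does not assert.

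For the record, the deleted paragraph is not sound as sketched. From $a\le c+\eps'$, the usable inequality is $(a-\eps)_+=g(a)(a-\eps')g(a)\le g(a)\,c\,g(a)$ with $g(t)^2(t-\eps')=(t-\eps)_+$. An inequality with $c+\eps'$ on the right does not give what you need. More seriously, the ``standard fact'' that $0\le p\le q$ with $p$ of the form $(s-\delta)_+$ forces $p=z^*qz$ is false. In $\mathrm{C}([0,1])$ take $q(t)=t$ and $p(t)=t\,(1+\sin(1/t))/2$ (with $p(0)=0$); this equals $((p+\delta)-\delta)_+$ and satisfies $0\le p\le q$. Then $\abs{z}^2=p/q$ on $(0,1]$ has no limit at $0$, so no continuous $z$ exists. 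Exact factorisations of this kind need an extra cut-down, using the slack between $\eps'$ and $\eps$, and a more careful argument. None of that is needed for this lemma.
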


Fix $\tau \in \tr(A)$, and $a \in (A \otimes \mathcal K)^+$, and define the rank of $a$ with respect to $\tau$ as 
$$\mathrm{d}_\tau(a) = \lim_{n \to\infty} \tau(a^{\frac{1}{n}}),$$
where $\tau$ is canonically extended to a lower semicontinuous trace of $A \otimes\mathcal K$. 
For any positive elements $a, b \in A \otimes \mathcal K$, if $a \precsim b$, then $\mathrm{d}_\tau(a) \leq \mathrm{d}_\tau(b)$ for all $\tau \in \tr(A)$. % if $a \precsim b$. % $$a \precsim b \quad \Longrightarrow \quad \mathrm{d}_\tau(a) \leq \mathrm{d}_\tau(b),\quad \tau \in \tr(A).$$

The reverse implication does not hold in general, even if $\mathrm{d}_\tau(a) < \mathrm{d}_\tau(b)$ is assumed. However, for a large class of C*-algebras, such as simple exact $\mathcal Z$-absorbing C*-algebras, Cuntz subequivalence is determined by trace strict inequalities (a property referred to as strict comparison) (see \cite{Ror-Z-stable}). In this paper, we shall show that strict comparison also holds for the C*-algebra $l^\infty(\Gamma) \rtimes \Gamma$, where $\Gamma$ is a countable discrete amenable group (Corollary \ref{cp-gp}).

We shall use the following results on the comparison properties of a homogeneous C*-algebra $A=\mathrm{M}_n(\mathrm{C}(\Omega))$, where $\Omega$ is a compact Hausdorff space. In general, it  does not have strict comparison. But its diagonal subalgebra always has a certain relative comparison property, regardless of the topological dimension of $\Omega$, and it has strict comparison if the topological dimension of $\Omega$ is zero.  These statements are known for separable C*-algebras, and the general statements can be reduced as follows to the separable case. 
\begin{lem}[cf.~Theorem 7.8 of \cite{Niu-MD-Z}]\label{matrix-C}
Consider the C*-algebra $A=\mathrm{M}_n(\mathrm{C}(\Omega))$, where $n \in \mathbb{N}$ and $\Omega$ is a compact Hausdorff space. Then for any positive contractions $f = \mathrm{diag}\{f_1, ..., f_n\}$ and $g = \mathrm{diag}\{g_1, ..., g_n\}$ such that
$$\mathrm{rank}(f(x)) < \frac{1}{4} \mathrm{rank}(g(x)) \quad\mathrm{and} \quad 4 < \mathrm{rank}(g(x)), \quad x \in \Omega, $$ it follows that $f \precsim g$ in $A$. 
\end{lem}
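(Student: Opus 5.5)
The plan is to reduce to the separable (metrizable) case, as the paragraph before the statement suggests, and then apply Theorem 7.8 of \cite{Niu-MD-Z}. The functions $f_1,\dots,f_n,g_1,\dots,g_n$ are finitely many elements of $\mathrm{C}(\Omega)$. Let $B \subseteq \mathrm{C}(\Omega)$ be the unital sub-C*-algebra they generate. Then $B$ is separable, so $B \cong \mathrm{C}(Y)$ for a compact metrizable space $Y$. Concretely, $Y$ is the image of $\Omega$ under the continuous map
$$\pi: \Omega \ni x \mapsto (f_1(x),\dots,f_n(x),g_1(x),\dots,g_n(x)) \in [0,1]^{2n},$$
and $f_i = \tilde f_i\circ\pi$, $g_i=\tilde g_i\circ\pi$, where $\tilde f_i,\tilde g_i$ are the coordinate functions on $Y$. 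Set $\tilde f=\mathrm{diag}\{\tilde f_1,\dots,\tilde f_n\}$ and $\tilde g=\mathrm{diag}\{\tilde g_1,\dots,\tilde g_n\}$ in $\mathrm{M}_n(\mathrm{C}(Y))$.

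The rank conditions transfer to $Y$. Every $y\in Y$ equals $\pi(x)$ for some $x\in\Omega$. Since $\tilde f(y)=f(x)$ and $\tilde g(y)=g(x)$ as diagonal matrices, we get
$$\mathrm{rank}(\tilde f(y)) < \tfrac14\,\mathrm{rank}(\tilde g(y)) \quad\mathrm{and}\quad 4<\mathrm{rank}(\tilde g(y)), \qquad y \in Y.$$
Theorem 7.8 of \cite{Niu-MD-Z} applies to $\mathrm{M}_n(\mathrm{C}(Y))$ with $Y$ compact metrizable, and it yields $\tilde f\precsim \tilde g$ in $\mathrm{M}_n(\mathrm{C}(Y))$. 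No dimension assumption on $Y$ is needed, which matters because $Y\subseteq[0,1]^{2n}$ can have positive dimension even when $\Omega$ does not.

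Pulling back finishes the proof. The map $\pi^*:\mathrm{C}(Y)\to\mathrm{C}(\Omega)$ is an injective unital $*$-homomorphism, and it induces $\mathrm{id}_{\mathrm{M}_n}\otimes\pi^*:\mathrm{M}_n(\mathrm{C}(Y))\to\mathrm{M}_n(\mathrm{C}(\Omega))$, which sends $\tilde f\mapsto f$ and $\tilde g\mapsto g$. Cuntz subequivalence is preserved by $*$-homomorphisms: if $x_k^*\tilde g x_k\to\tilde f$, then applying the map gives $y_k^* g y_k\to f$ with $y_k$ the image of $x_k$. Hence $f\precsim g$ in $A$.

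The only step that needs care is checking that the cited Theorem 7.8 matches these hypotheses exactly: diagonal positive contractions, the pointwise rank inequality with constant $\frac14$, and the bound $4<\mathrm{rank}$, over an arbitrary compact metric space. If the cited form is stated with $(\cdot-\eps)_+$ or an open-support rank, I would first use Lemma \ref{sub-lem-Cu} to reduce to proving $(f-\eps)_+\precsim g$. I would then apply the theorem to $(\tilde f-\eps)_+$, whose rank is pointwise at most that of $\tilde f$, so the hypotheses are preserved.
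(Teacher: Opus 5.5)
Your proposal is correct and follows essentially the same route as the paper: pass to the separable unital sub-C*-algebra of $\mathrm{C}(\Omega)$ generated by the $f_i, g_i$, and identify it with $\mathrm{C}(Y)$ for a compact metrizable quotient $Y$ of $\Omega$. The rank conditions descend to $Y$, Theorem 7.8 of \cite{Niu-MD-Z} gives the subequivalence in $\mathrm{M}_n(\mathrm{C}(Y))$, and this subequivalence persists in $\mathrm{M}_n(\mathrm{C}(\Omega))$.
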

\begin{proof}
By Theorem 7.8 of \cite{Niu-MD-Z}, the statement holds if $A$ is separable.  The general case can be reduced to the separable case: 
Pick a separable sub-C*-algebra $C \subseteq \mathrm{C}(\Omega)$ which contains $f_1,..., f_n, g_1, ..., g_n$, and consider the separable sub-C*-algebra $\mathrm{M}_n(C) \subseteq \mathrm{M}_n(\mathrm{C}(\Omega))$. Write $C = \mathrm{C}(\Omega')$, and note that on the quotient $\Omega'$ of $\Omega$, 
$$\mathrm{rank}(f(x')) < \frac{1}{4} \mathrm{rank}(g(x')), \quad x' \in \Omega'.$$ Then, by Theorem 7.8 of \cite{Niu-MD-Z}, $f \precsim g$ in $\mathrm{M}_n(C) \subseteq \mathrm{M}_n(\mathrm{C}(\Omega))$, as desired.
\end{proof}

%Note that the C*-algebra $l^\infty(\Gamma)$ has real rank zero, which implies that the topological covering dimension of $\widehat{l^\infty(\Gamma)}$ is zero.

In the case that the topological covering dimension of $\Omega$ is zero (in particular, this includes the case $\Omega = \widehat{l^\infty(\Gamma)}$, since the C*-algebra $l^\infty(\Gamma)$ has real rank zero), one actually has strict comparison:
\begin{lem}\label{matrix-C-0}
Consider the C*-algebra $A=\mathrm{M}_n(\mathrm{C}(\Omega))$, where $n \in \mathbb{N}$ and $\Omega$ is a compact Hausdorff space with zero covering dimension. Let $f, g \in A$ be positive elements such that
$$\mathrm{rank}(f(x)) < \mathrm{rank}(g(x)), \quad x\in \Omega.$$ Then $f \precsim g$ in $A$.

As a trivial special case, if $f$ and $g$ are also diagonal projections, then there is a unitary $u \in A$ such that $u(x)$ is a permutation unitary for all $x \in \Omega$, $u^*f u$ (and $ugu^*$) is a diagonal projection, and $ u^*f u \leq g. $
\end{lem}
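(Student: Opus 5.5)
Because $\Omega$ is zero-dimensional, I will reduce to projections over a finite clopen partition and argue pointwise. As in the proof of Lemma \ref{matrix-C}, I first reduce to separable $\Omega$. Choose a separable sub-C*-algebra $C\subseteq \mathrm{C}(\Omega)$ that contains the matrix entries of $f$ and $g$. Also put into $C$ enough characteristic functions of clopen sets that $C=\mathrm{C}(\Omega')$ with $\Omega'$ metrizable and zero-dimensional. This is possible since $\mathrm{C}(\Omega)$ has real rank zero: the separable subalgebra can be enlarged, countably many times, to a separable subalgebra of real rank zero. The rank inequality passes to $\Omega'$, because $f$ and $g$ factor through $\Omega'$. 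So we may assume $\Omega$ is a compact metrizable zero-dimensional space.

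By Lemma \ref{sub-lem-Cu}, it suffices to show $(f-\eps)_+\precsim g$ for each $\eps>0$. The map $x\mapsto \mathrm{rank}((f-\eps)_+(x))$ is upper semicontinuous, and we have
$$\mathrm{rank}((f-\eps)_+(x))\le \mathrm{rank}(f(x))<\mathrm{rank}(g(x)).$$
The map $x\mapsto \mathrm{rank}(g(x))$ is lower semicontinuous, and $\mathrm{rank}(g(x))=\lim_{\delta\to0}\mathrm{rank}(\chi_{(\delta,\infty)}(g(x)))$. Compactness then yields a $\delta>0$ and a finite cover by open sets $U_i$ on which $\mathrm{rank}((f-\eps)_+)\le k_i<\mathrm{rank}(\chi_{(\delta,\infty)}(g))$, for constants $k_i$. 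Refine this to a clopen partition $\Omega=\bigsqcup_i V_i$, which zero-dimensionality allows. It is enough to work on each $V_i$ separately, since $\mathrm{M}_n(\mathrm{C}(\Omega))=\bigoplus_i \mathrm{M}_n(\mathrm{C}(V_i))$ and Cuntz subequivalence can be checked summand by summand.

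On a single $V=V_i$, the key point is that over a zero-dimensional compact metrizable space one can find projections $p\ge (f-\eps)_+$-support and $q$ under $g$ with controlled ranks. More precisely, I want to find:
\begin{itemize}
\item a projection $q\in \mathrm{M}_n(\mathrm{C}(V))$ with $q\precsim g$ and constant rank $\ge k+1$ (for example, $q$ in the hereditary subalgebra generated by $(g-\delta/2)_+$);
\item a projection $p$ of constant rank $\le k$ with $(f-2\eps)_+\precsim p$.
\end{itemize}
For this I use the standard fact that for a zero-dimensional base, projections in $\mathrm{M}_n(\mathrm{C}(V))$ can be produced by functional calculus and perturbation in small clopen pieces. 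Concretely, on a fine enough clopen partition of $V$, the spectral projection of $(f-\eps)_+(x_0)$ at a point $x_0$ extends to a locally constant projection that nearly dominates $(f-2\eps)_+$. Refining the partition again, all vector bundles over a zero-dimensional space are trivial on the pieces, so two projections of ranks $k<k+1$ satisfy $p\precsim q$ via a partial isometry, assembled piece by piece. This gives $(f-2\eps)_+\precsim p\precsim q\precsim g$, and since $\eps$ was arbitrary, Lemma \ref{sub-lem-Cu} finishes the argument.

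For the diagonal-projection special case, write $f=\sum_{j\in S(x)}e_{jj}$ and $g=\sum_{j\in T(x)}e_{jj}$. The index sets $S(x)$ and $T(x)$ are locally constant, so they are constant on the pieces of a clopen partition, and on each piece $|S|<|T|$ (or $|S|\le|T|$ suffices). Choose a permutation $\sigma$ mapping $T$ onto a set containing $S$. This gives a permutation unitary $u$ that is locally constant, hence continuous, with $u^*fu$ diagonal and $u^*fu\le g$. The main obstacle is the middle step, producing $p$ and $q$ with uniform rank control near points where the ranks jump. Upper and lower semicontinuity together with the clopen refinement is what I would rely on to get past it.
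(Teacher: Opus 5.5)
Your argument works and follows a different route from the paper's, but one stated fact is false and needs replacing.

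\textbf{The semicontinuity claim.} The map $x\mapsto\mathrm{rank}((f-\eps)_+(x))$ is lower, not upper, semicontinuous. For example, take $\Omega=\{0\}\cup\{1/m: m\ge 1\}$, $n=1$ and $f(x)=\eps+x$: the rank is $0$ at $0$ and $1$ at every $1/m$. What you actually need is only a local bound: $\mathrm{rank}((f-\eps)_+(x))\le\mathrm{rank}(f(x_0))$ whenever $\norm{f(x)-f(x_0)}<\eps$. This follows from Weyl's inequality for the ordered eigenvalues, so you can take $k_i=\mathrm{rank}(f(x_i))$.

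\textbf{Making the middle step precise.} Use the standard fact that $\norm{a-b}<\eta$ implies $(a-\eta)_+\precsim b$. Let $\eta$ be below the least non-zero eigenvalue of $g(x_0)$, and let $W\ni x_0$ be clopen with $\norm{f(x)-f(x_0)}<\eps/2$ and $\norm{g(x)-g(x_0)}<\eta/2$ on $W$. Let $P$ and $Q$ be the constant support projections of $f(x_0)$ and $g(x_0)$. Then in $\mathrm{M}_n(\mathrm{C}(W))$ you get $(f-\eps)_+\precsim f(x_0)1_W\sim P1_W$ and $Q1_W\sim(g(x_0)-\eta)_+1_W\precsim g$. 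Since $\mathrm{rank}(P)<\mathrm{rank}(Q)$, a constant unitary gives $P1_W\precsim Q1_W$. A finite clopen partition subordinate to such sets $W$ then yields $(f-\eps)_+\precsim g$.

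Written this way, three parts of your outline become unnecessary: the uniform $\delta$, the semicontinuity discussion, and the reduction to metrizable $\Omega$. Covering dimension zero already supplies the clopen partitions. The argument also shows that the non-strict pointwise inequality $\mathrm{rank}(f(x))\le\mathrm{rank}(g(x))$ suffices.

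\textbf{Comparison with the paper.} The paper argues globally in the Cuntz semigroup.
\begin{enumerate}
\item Because $A$ has real rank zero, $[f]$ and $[g]$ are suprema of increasing sequences of projection classes.
\item The pointwise rank hypothesis gives $\mathrm{d}_\tau(f)<\mathrm{d}_\tau(g)$ on $\mathrm{T}(A)$.
\item A Dini-type compactness argument over $\mathrm{T}(A)$ places each projection below $f$ strictly under, on all traces, some projection below $g$.
\item Comparison of projections (the easy locally constant case), followed by passing to suprema, gives $[f]\le[g]$.
\end{enumerate}
The paper's route is short given the $\mathrm{Cu}$ machinery. It is also a general template: real rank zero, a compact trace space and comparison of projections together give strict comparison of positive elements. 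Your route is elementary and local in $\Omega$, needs neither traces nor suprema in $\mathrm{Cu}(A)$, and proves a slightly stronger statement.

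Your treatment of the diagonal-projection case is the same as the paper's.
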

\begin{proof}
Note that $A$ is a locally AF algebra. 
Since $A$ has real rank zero, the Cuntz classes of both $f$ and $g$ are sups of increasing sequences of classes of projections. By compactness of $\mathrm{T}(A)$, each of the projection classes of $f$, since it is strictly less on $\mathrm{T}(A)$ than the sup of the projection classes for $g$, is eventually strictly less than on $\mathrm{T}(A)$ than the individual projection classes for $g$. Since strict comparability for projections clearly holds in $A$, taking sups in $\mathrm{Cu}(A)$ we find that $f$ is Cuntz majorized by $g$.

%Taking sups, just for the projection classes of $g$, 

%For the general case, write $f = (f_{ij})$ and $g = (g_{ij})$. Since $\mathrm{C}(\Omega)$ has real rank zero, there is a separable sub-C*-algebra $C \subseteq \mathrm{C}(\Omega)$ which contains $f_{ij}$, $g_{ij}$, $i, j = 1, ..., n$, and such that $C$ also has real rank zero. Then $f, g \in \mathrm{M}_n(C)$, and the lemma follows from the separable case.

If $f$ and $g$ are diagonal projections, then, since $\Omega$ is totally disconnected and compact, there is a partition $\Omega = \Omega_1 \sqcup \Omega_2 \sqcup \cdots \sqcup \Omega_n$ such that $\Omega_i$, $i=1, ..., n$, are clopen sets, and the restrictions of $f$ and $g$ to each $\Omega_i$ are constant functions. Then, on each $\Omega_i$, since $\mathrm{rank}(f(x)) < \mathrm{rank}(g(x))$, there is a permutation unitary $u_i$ such that $u^*_i f u_i \leq g$ on $\Omega_i$, and the desired unitary $u$ can be obtained as $u_i$ on $\Omega_i$.
\end{proof}

\section{Tilings, (URP), and (COS)}

%\subsection{Rokhlin Towers}

\subsection{F{\o}lner towers and tilings}\label{folner-tiling}
Let $(\Omega, \Gamma)$ be a topological dynamical system, where $\Omega$ is a compact Hausdorff space and $\Gamma$ is a countable discrete group. A tower is a pair $(Z, \Gamma_0)$, where $Z \subseteq \Omega$ is a subset and $\Gamma_0 \subseteq \Gamma$ is finite, such that the subsets $$Z \gamma, \quad \gamma \in \Gamma_0,$$ are mutually disjoint. 

A tower is said to be open (closed) if $Z$ is open (closed). Clopen towers $(Z, \Gamma_0)$ correspond one-to-one to pairs $(p, \Gamma_0)$ where $p \in \mathrm{C}(\Omega)$ is a projection such that the projections $$\gamma(p), \quad \gamma \in \Gamma_0,$$ are mutually orthogonal.

Using (exact) tilings of amenable groups, we are able to construct a partition of unity using towers for the (C*-)dynamical system $(l^\infty(\Gamma), \Gamma)$:
\begin{defn}
A tiling of a countable discrete group $\Gamma$ consists of finite sets (shapes) $$\Gamma_1, ..., \Gamma_n \subseteq \Gamma$$ and centres $$\underbrace{c^{(1)}_1, c^{(1)}_2, ... }_{\textrm{for $\Gamma_1$}},\  \underbrace{c^{(2)}_1, c^{(2)}_2, ... }_{\textrm{for $\Gamma_2$}},\ ..., \underbrace{c^{(n)}_1, c^{(n)}_2, ... }_{\textrm{for $\Gamma_n$}} \in \Gamma, $$ such that the subsets 
$$ \Gamma_ic^{(i)}_j, \quad i=1, ..., n,\ j=1, 2, ... , $$
are mutually disjoint, and
$$\bigcup_{i=1}^n \bigcup_{j=1}^\infty\Gamma_ic^{(i)}_j = \Gamma. $$
The finite sets $\Gamma_i$, $i=1, ..., n$, are called the shapes, and the elements  $c_j^{(i)}$, $i=1, ..., n$, $j=1, 2, ...$, are called the centres (of $\Gamma_i c_j^{(i)}$).
\end{defn}

Let $K \subseteq \Gamma$ be a finite set and let  $\eps>0$.
A (non-empty) finite set $E \subseteq \Gamma$ is said to be $(K, \eps)$-invariant if
$$\frac{\abs{EK \Delta E}}{\abs{E}} < \eps.$$ Recall that the group $\Gamma$ is amenable if, and only if, there is a sequence $(\Gamma_n)$ of (non-empty) finite subsets of $\Gamma$ such that for any $(K, \eps)$, $\Gamma_n$ is $(K, \eps)$-invariant if $n$ is sufficiently large. The sequence $(\Gamma_n)$ is called a
F{\o}lner sequence, and the finite sets $\Gamma_n$, $n=1, 2, ...$, are called F{\o}lner sets.

Tilings of a discrete amenable group by F{\o}lner sets always exist:
\begin{thm}[Theorem 4.3 of \cite{DHZ-tiling}]\label{tiling-exists}
Let $\Gamma$ be a countable discrete amenable group. For any $(K, \eps)$, where $ K \subseteq \Gamma$ is a finite set and $\eps>0$, there exist $(K, \eps)$-invariant finite sets $\Gamma_1, ..., \Gamma_S \subseteq \Gamma$ which tile the group $\Gamma$.
\end{thm}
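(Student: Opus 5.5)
Theorem \ref{tiling-exists} is a result we quote from \cite{DHZ-tiling}. The plan below follows the general strategy there: start from an Ornstein--Weiss quasitiling, make it disjoint, and then absorb the uncovered remainder through a multi-scale construction. Throughout, fix $(K,\eps)$. We may assume $K$ is symmetric and contains the identity.

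\textbf{Step 1 (quasitiling).} Using amenability, choose Følner sets $F_1 \subseteq F_2 \subseteq \cdots \subseteq F_N$ with $N \approx \log(1/\delta)/\delta$. Choose them so that each $F_{k+1}$ is $(F_k F_k^{-1}, \delta)$-invariant, and $F_1$ is $(K,\delta)$-invariant, for some $\delta \ll \eps$. The Ornstein--Weiss argument then produces centre sets $C_1,\dots,C_N$ with the following properties. The translates $F_k c$, for $c \in C_k$, are $\delta$-disjoint: each contains a subset $F_k^{\circ} c$ of proportion at least $1-\delta$, and these subsets are pairwise disjoint. Their union has lower Banach density at least $1-\delta$. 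I would build the $C_k$ greedily from the largest shape down to the smallest, at each stage packing translates into the still-uncovered region as long as a positive proportion of it remains.

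\textbf{Step 2 (disjointification).} Replace each tile $F_k c$ by the trimmed set $F_k^{\circ} c$. Removing a $\delta$-fraction of a $(K,\delta)$-invariant set leaves a $(K, C\delta)$-invariant set, so the trimmed tiles are still $(K,\eps/2)$-invariant once $\delta$ is small. Only finitely many shapes arise, because each trimmed shape is a subset of one of the finitely many $F_k$. We then have a disjoint family of $(K,\eps/2)$-invariant tiles, with finitely many shapes, covering all of $\Gamma$ except a set $R$ of upper Banach density at most $\delta$.

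\textbf{Step 3 (absorbing the remainder): the main obstacle.} A small density for $R$ is not enough: \emph{every} element of $\Gamma$ must be covered, and the number of shapes must stay finite. My first attempt is to distribute each point of $R$ to a nearby tile. Since $R$ has small upper Banach density, every translate of a large Følner set $F$ meets $R$ in a proportion below $\delta$. A Hall-type matching argument, using $F$-neighbourhoods, should then assign each $r \in R$ to a tile within a bounded distance. Each tile would receive at most $\sqrt{\delta}\,|\text{tile}|$ points, so enlarging the tiles by their assigned points keeps them $(K,\eps)$-invariant. The bounded distance also keeps the number of enlarged shapes finite, since each enlarged shape is a subset of a fixed finite set.

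The delicate part is getting a \emph{uniform} bound on how many points each tile must absorb. If Hall's condition fails locally, I would instead do what \cite{DHZ-tiling} does. Iterate Steps 1--2 at an increasing sequence of scales, obtaining nested, mutually congruent quasitilings. Use a higher-level tile to absorb the remainder left inside it by the lower level, and handle the remainder near its boundary with the next level up. Then pass to a limit via compactness of the space of tilings with finitely many shapes, viewed as a closed subset of a product space over $\Gamma$. The limit is an exact tiling by the finitely many $(K,\eps)$-invariant shapes $\Gamma_1,\dots,\Gamma_S$.
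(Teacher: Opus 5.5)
The paper gives no proof of this statement; it imports it from \cite{DHZ-tiling}, as you also say, so the question is whether your sketch stands on its own. Steps 1--2 are a reasonable outline of the Ornstein--Weiss part. One small fix: you should require every $F_k$, not only $F_1$, to be $(K,\delta)$-invariant, since Step 2 trims all of them.

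The gap is Step 3, which is the whole content of the theorem beyond Ornstein--Weiss. You only say that Hall's condition ``should'' hold, and offer a multi-scale scheme in case it fails. That fallback is not an argument. Handing boundary remainders to ever higher levels gives no a priori bound on the shapes, so there is no fixed finite family of shapes to which compactness of the space of tilings could be applied. It is also not the route of \cite{DHZ-tiling}, whose key step is exactly a marriage-theorem absorption of the remainder, which is what you left unverified.

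What is missing is a double count that turns small upper Banach density into Hall's condition. Fix a finite $F\ni e$ with $\abs{Fg\cap R}\le\delta\abs{F}$ for all $g\in\Gamma$; small upper Banach density of $R$ provides this. For finite $R'\subseteq R$, put $G=F^{-1}R'=\{g:Fg\cap R'\neq\emptyset\}$. Counting pairs $(x,g)$ with $g\in G$ and $x\in Fg$ gives
\[
\abs{R'}\,\abs{F}=\sum_{g\in G}\abs{Fg\cap R'}\le\delta\abs{F}\abs{G},\qquad (1-\delta)\abs{F}\abs{G}\le\sum_{g\in G}\abs{Fg\setminus R}\le\abs{F}\,\abs{FG\setminus R},
\]
so $\abs{FF^{-1}R'\setminus R}\ge(1-\delta)\abs{R'}/\delta$. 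Note that it is $FG$, not $G$, whose tiled part is controlled, so the right neighbourhood of $r$ is $FF^{-1}r$ rather than an $F$-neighbourhood.

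Now join each $r\in R$ to every tile meeting $FF^{-1}r$, and give each tile $T$ capacity $\lceil\delta\abs{T}/(1-\delta)\rceil$. The tiles joined to $R'$ cover $FF^{-1}R'\setminus R$, so their total capacity is at least $\abs{R'}$; this is Hall's condition. Each $r$ has only finitely many neighbours. So M.~Hall's infinite marriage theorem, applied after duplicating each tile according to its capacity, assigns every $r$ to a neighbouring tile without exceeding capacities.

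An enlarged tile $Sc\cup\phi^{-1}(Sc)$ has shape contained in $FF^{-1}S$, so only finitely many shapes occur. If $\delta\le 1/2$ and every tile has at least $1/\delta$ elements, at most $3\delta\abs{S}$ points are added to each tile. Hence the invariance worsens by at most $3\delta(\abs{K}+1)$. With this inequality your first attempt goes through, and the multi-scale construction is unnecessary.
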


A tiling of $\Gamma$ naturally induces a partition of unity of $l^\infty(\Gamma)$ by clopen towers: Let $\{\Gamma_i, c_j^{(i)}: i=1, ..., n,\ j=1, 2, ... \}$ be a tiling of $\Gamma$. Without loss of generality, one may assume that $e \in \Gamma_i$, $i=1, ..., n$. Define $$p_i = \chi_{\{c^{(i)}_1,\   c^{(i)}_2,\ ... \} } \in l^\infty(\Gamma),\quad i=1, ..., n.$$ Then $p_i$ is a projection and 
$$(p_i, \Gamma_i)$$
is a tower. Note that
$$ \sum_{\gamma \in \Gamma_i} \gamma(p_i) = \chi_{ \bigcup_{j=1}^\infty \Gamma_i c_j^{(i)} }. $$ Since $\{\Gamma_i, c_j^{(i)}: i=1, ..., n,\ j=1, 2, ... \}$ is a tiling of $\Gamma$, one has
$$ \sum_{i=1}^n \sum_{\gamma \in \Gamma_i} \gamma(p_i) = \mathbf \chi_\Gamma = \mathbf 1_{l^\infty(\Gamma)}. $$

\subsection{Rokhlin Partitions}

Recall (\cite{Niu-MD-Z}) that a dynamical system $(X, \Gamma)$ has the Uniform Rokhlin Property (abbreviated URP) if for any finite set $K \subseteq \Gamma$ and any $\eps>0$, there are mutually disjoint open towers $$(Z_s, \Gamma_s),\quad s=1, ..., S,$$
such that
\begin{enumerate}
\item each $\Gamma_s$, $s=1, ..., S$, is $(K, \eps)$-invariant, and
\item $\mu(X \setminus \bigcup_{s=1}^S \bigcup_{\gamma \in \Gamma_s} Z_s\gamma) < \eps$ for every invariant probability Borel measure $\mu$.
\end{enumerate}

Consider the dynamical system $(l^\infty(\Gamma), \Gamma)$. Then, by Theorem \ref{tiling-exists} and the discussion at the beginning of this section (Subsection \ref{folner-tiling}), it actually has the following stronger version of the URP, namely, the Rokhlin towers form a partition of unity:
\begin{prop}\label{S-URP}
Consider the dynamical system $(l^\infty(\Gamma), \Gamma)$. For any finite set $K \subseteq \Gamma$ and any $\eps>0$, there are mutually disjoint (clopen) towers $$(p_s, \Gamma_s),\quad s=1, ..., S,$$
such that
\begin{enumerate}
\item each $\Gamma_s$ is $(K, \eps)$-invariant, and
\item $\sum_{s=1}^S \sum_{\gamma \in \Gamma_s} \gamma(p_s) = \mathbf 1_{l^\infty(\Gamma)}$.
\end{enumerate}
\end{prop}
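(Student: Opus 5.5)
The plan is to combine Theorem \ref{tiling-exists} with the construction described in Subsection \ref{folner-tiling}; the only care needed is with the normalization $e \in \Gamma_i$ and with the convention for the action. Given a finite set $K \subseteq \Gamma$ and $\eps>0$, Theorem \ref{tiling-exists} provides $(K,\eps)$-invariant finite sets $\Gamma_1, \dots, \Gamma_S$ that tile $\Gamma$, with centres $c^{(s)}_j$. I first arrange $e \in \Gamma_s$ for every $s$. If $e \notin \Gamma_s$, pick $h_s \in \Gamma_s$ and replace $\Gamma_s$ by $\Gamma_s h_s^{-1}$ and each centre $c^{(s)}_j$ by $h_s c^{(s)}_j$. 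The tiles $\Gamma_s c^{(s)}_j$ are unchanged, so this is still an exact tiling. I also have to check that right translation keeps $(K,\eps)$-invariance. Since $EK$ uses right multiplication, a right translate $Eh$ satisfies $EhK \,\Delta\, Eh$, which in general is not a translate of $EK \,\Delta\, E$. So either the invariance is taken in the form matching the tiling convention of \cite{DHZ-tiling} (left translates $h^{-1}E$ then satisfy $h^{-1}EK \,\Delta\, h^{-1}E = h^{-1}(EK \,\Delta\, E)$), or one simply notes that the construction below does not need $e\in\Gamma_s$. I expect this bookkeeping between left and right conventions to be the only real obstacle, and I would handle it by choosing the side of the tiling consistently with the invariance definition.

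Next, set $p_s = \chi_{\{c^{(s)}_1, c^{(s)}_2, \dots\}} \in l^\infty(\Gamma)$. This is a projection, so it corresponds to a clopen set in $\beta\Gamma$. Writing $\gamma(p_s)$ for the translate, one identifies $\gamma(p_s) = \chi_{\{\gamma c^{(s)}_j : j\}}$. Because the tiles $\Gamma_s c^{(s)}_j$ are pairwise disjoint, the sets $\gamma c^{(s)}_j$, for $\gamma\in\Gamma_s$ and $j \ge 1$, are all distinct. Hence the projections $\gamma(p_s)$, $\gamma \in \Gamma_s$, are mutually orthogonal, which shows that $(p_s,\Gamma_s)$ is a tower.

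Disjointness across different $s$ holds for the same reason: the supports $\bigcup_j \Gamma_s c^{(s)}_j$ are pairwise disjoint. Summing gives
$$\sum_{s=1}^S\sum_{\gamma\in\Gamma_s}\gamma(p_s)=\sum_{s=1}^S\chi_{\bigcup_j\Gamma_s c^{(s)}_j}=\chi_\Gamma=\mathbf 1_{l^\infty(\Gamma)},$$
using that the tiles cover $\Gamma$. This proves (2), and (1) holds by the choice of the $\Gamma_s$. The only point to verify carefully is that the action $\gamma(\phi)$, induced by right multiplication, really sends $\chi_{\{c\}}$ to $\chi_{\{\gamma c\}}$. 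If it instead sends it to $\chi_{\{c\gamma^{-1}\}}$, I would apply the tiling theorem to the opposite group, which is amenable, and use the corresponding mirrored tiling.
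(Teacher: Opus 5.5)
Your proposal is correct and follows the paper's own argument. The paper likewise takes the tiling from Theorem \ref{tiling-exists}, sets $p_s$ to be the indicator of the centres of the tiles of shape $\Gamma_s$, and reads off orthogonality and $\sum_{s}\sum_{\gamma\in\Gamma_s}\gamma(p_s)=\mathbf 1_{l^\infty(\Gamma)}$ from the disjointness and covering of the tiles (Subsection \ref{folner-tiling}). Your side remarks are sound but not needed: the normalization $e\in\Gamma_s$ plays no role, and the left/right bookkeeping only decides which mirrored form of the tiling theorem you invoke.
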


\begin{defn}\label{defn-S-URP}
A dynamical system $(X, \Gamma)$ is said to have the strong URP if it satisfies the conclusion of the proposition above, i.e., there are Rokhlin towers which are arbitrarily close to being invariant 
and form an exact partition of unity. 
\end{defn}

\begin{rem}\label{tp-free}
Any minimal dynamical system $(X, \Gamma)$ with the URP is topologically free, and so by \cite{Elliott_1980} (Theorem 3.2 and Remark 3.7), the reduced crossed product is simple. To see this,  let $(Z_{s, n}, \Gamma_{s, n})$, $s=1, ..., S_n$, $n=1, 2, ...$, be a sequence of Rokhlin decompositions which is arbitrarily close to being invariant and such that the leftover is also arbitrarily small as $n \to \infty$. Then, for each $\gamma \in \Gamma$, the open subset
$$ \bigcup_{n=1}^\infty  \{ x g : x \in Z_{s, n},\ g \in \Gamma_{s, n} \cap \Gamma_{s, n} \gamma^{-1},\ s=1, ..., S_n \} $$ is co-null for all invariant measures. By the minimality condition, this set must be dense. Since $\Gamma$ is countable, by the Baire category theorem, the intersection $$ \bigcap_{\gamma \in \Gamma} \bigcup_{n=1}^\infty  \{ x g : x \in Z_{s, n},\ g \in \Gamma_{s, n} \cap \Gamma_{s, n} \gamma^{-1},\ s=1, ..., S_n \} $$ is dense in $X$. But every point in this last subset has trivial stabilizer. This implies that $(X, \Gamma)$ is topologically free.

%Let $(X, \Gamma)$ and $(Y, \Gamma)$ be topological dynamical systems. If $(X, \Gamma)$ is a quotient of $(Y, \Gamma)$, and if $(X, \Gamma)$ has the strong URP above, then $(Y, \Gamma)$ also has the  strong URP.
\end{rem}

By \cite{Ellis_1960}, any minimal (non-empty) closed subset $M \subseteq \beta\Gamma$ is universal in the sense that $(M, \Gamma)$ is minimal and if $(X, \Gamma)$ is a minimal dynamical system where $X$ is compact, then it is a quotient of $(M, \Gamma)$. Since the strong URP above passes to subsystems, the universal minimal system of $\Gamma$ also has the strong URP. 
\begin{cor}\label{S-URP-M}
Let $\Gamma$ be a discrete countable amenable group. Then, its universal minimal set $(M, \Gamma)$ has the strong URP.
\end{cor}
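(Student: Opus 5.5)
The plan is to restrict the partition of unity of Proposition \ref{S-URP} to $M$. First identify the setting. By \cite{Ellis_1960}, $M$ may be taken as a minimal closed invariant subset of $\beta\Gamma = \widehat{l^\infty(\Gamma)}$. Then $\mathrm{C}(M)$ is the quotient of $l^\infty(\Gamma)$ by the invariant ideal of functions vanishing on $M$. Let $\pi: l^\infty(\Gamma) \to \mathrm{C}(M)$ denote restriction to $M$. It is a surjective unital $\Gamma$-equivariant $*$-homomorphism, because $M$ is invariant and the action on $\beta\Gamma$ is the continuous extension of the translation action.

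Next, fix a finite set $K \subseteq \Gamma$ and $\eps>0$. Proposition \ref{S-URP} provides clopen towers $(p_s, \Gamma_s)$, $s=1, \dots, S$, with each $\Gamma_s$ being $(K,\eps)$-invariant and $\sum_s\sum_{\gamma\in\Gamma_s}\gamma(p_s) = \mathbf 1$. Put $q_s = \pi(p_s)$. Each $q_s$ is a projection in $\mathrm{C}(M)$, namely the characteristic function of the clopen set $Z_s \cap M$, where $Z_s$ is the clopen set in $\beta\Gamma$ corresponding to $p_s$. By equivariance, $\pi(\gamma(p_s)) = \gamma(q_s)$. Orthogonality of the family $\{\gamma(p_s)\}$ is preserved by the homomorphism $\pi$, so the sets $(Z_s\cap M)\gamma$, $\gamma\in\Gamma_s$, $s=1,\dots,S$, are mutually disjoint. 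Hence the $(q_s,\Gamma_s)$ are mutually disjoint clopen towers in $M$. Applying $\pi$ to the partition of unity gives $\sum_s\sum_{\gamma\in\Gamma_s}\gamma(q_s) = \mathbf 1_{\mathrm{C}(M)}$. The shapes $\Gamma_s$ are unchanged and therefore remain $(K,\eps)$-invariant. This is exactly the strong URP of Definition \ref{defn-S-URP} for $(M,\Gamma)$.

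There is no real obstacle. One point needs care: some $q_s$ may vanish, since $Z_s$ could miss $M$. Such towers are simply discarded, because they contribute zero to the sum. (If the definition is read as allowing empty towers, nothing needs to be done.) One should also check that the left/right conventions for the action agree under $\pi$. This holds because $\pi$ is induced by the inclusion $M \hookrightarrow \beta\Gamma$ of $\Gamma$-spaces. As a remark, the same argument shows that the strong URP passes to any closed invariant subsystem.
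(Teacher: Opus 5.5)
Your proposal is correct and is essentially the paper's proof. The paper also realizes $M$ as a minimal closed invariant subset of $\beta\Gamma$ via Ellis and restricts the clopen Rokhlin partition of Proposition \ref{S-URP} to get the towers $(Z_s\cap M,\Gamma_s)$; your equivariant restriction map $\pi\colon l^\infty(\Gamma)\to\mathrm{C}(M)$ is just the C*-algebraic phrasing of that same step.
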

\begin{proof}
Let $M \subseteq \beta\Gamma$ be a minimal compact invariant subset. By \cite{Ellis_1960}, $(M, \Gamma)$ provides a realization of the universal minimal system of $\Gamma$. Let $K \subseteq \Gamma$ be a finite set, and let $\eps>0$. By Proposition \ref{S-URP}, there are mutually disjoint (clopen) towers $(Z_s, \Gamma_s)$, $s=1, ..., S,$
such that
\begin{enumerate}
\item each $\Gamma_s$ is $(K, \eps)$-invariant, and
\item $\bigcup_{s=1}^S \bigcup_{\gamma \in \Gamma_s} Z_s\gamma = \beta\Gamma$.
\end{enumerate}
Then $$(Z_s \cap M, \Gamma_s), \quad s=1, ..., S, $$ are the desired towers for the universal minimal system $(M, \Gamma)$.
\end{proof}

\begin{rem}
This  strong URP should be compared to the Bratteli-Vershik models studied in \cite{Put-PJM} and \cite{HPS-Cantor}. It also should be compared to the dynamical quasitiling considered in Definition 6.1 of \cite{DZ-comparison}. %in the case that $\Omega$ is totally disconnected. 
Lower semicontinuous analogues were considered in Section 8 of \cite{Niu-MD-Z}.  
\end{rem}

As in \cite{Niu-MD-Z}, this strong version of the URP implies  that the C*-algebra $l^\infty(\Gamma) \rtimes \Gamma$ can be tracially approximated by unital sub-C*-algebras from the Rokhlin towers, which are homogeneous C*-algebras (but the cutting element in general is still not a projection). The following proposition also can be compared to \cite{Li_2018} in which the C*-algebra  $l^\infty(\Gamma) \rtimes \Gamma$ is shown to be (non-separable) AF if $\Gamma$ is locally finite.

\begin{prop}\label{urp}
Let $(\Omega, \Gamma)$ be a topological dynamical system which has the strong URP of Definition \ref{defn-S-URP} (i.e., Rokhlin towers form a partition of unity). 
%Let $\Gamma$ be a discrete amenable group. 
Then the C*-algebra  $A=\mathrm{C}(\Omega) \rtimes \Gamma$ has the following property:

For any finite subset $\{f_1, ..., f_n\} \subseteq A$ and any $\eps>0$, there exist $f'_1, ..., f'_n \in A$, a unital sub-C*-algebra $C \subseteq A$ (so $1_C = 1_A$), and a positive contraction $h \in C \cap \mathrm{C}(\Omega)$ such that (with a slight abuse of notation)  
\begin{enumerate}
\item 
$C = \bigoplus_{s = 1}^S \mathrm{M}_{n_s}(\mathrm{C}(Z_s))$, where $Z_s \subseteq \Omega$, $s=1, ..., S$, are mutually disjoint clopen subsets,
\item $\mathrm{C}(\Omega) \subseteq C$, and $\mathrm{C}(\Omega)$ is equal to the diagonal sub-C*-algebra of $\bigoplus_{s = 1}^S \mathrm{M}_{n_s}(\mathrm{C}(Z_s))$,
\item $\norm{f_i - f'_i} < \eps$, $i=1, ..., n$,
\item $\norm{[h, f'_i]} < \eps$, $ i =1, ..., n$, 
\item $hf'_ih \in C$, $i=1, ..., n$, and 
\item $\mathrm{d}_\tau(1-h) < \eps$, $\tau \in \tr(A)$.
\end{enumerate}
\end{prop}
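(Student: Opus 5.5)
First reduce to the case where each $f_i$ lies in the algebraic crossed product. Choose $f'_i$ of the form $f'_i=\sum_{\gamma\in F} a_{i,\gamma}u_\gamma$, where $F\subseteq\Gamma$ is finite and symmetric, $a_{i,\gamma}\in\mathrm{C}(\Omega)$, and $\norm{f_i-f'_i}<\eps$. This gives (3). Set $K=F$ and pick a small $\delta>0$. Apply the strong URP (Definition \ref{defn-S-URP}) with $(K,\delta)$. This yields clopen towers $(p_s,\Gamma_s)$, $s=1,\dots,S$, with each $\Gamma_s$ being $(K,\delta)$-invariant and
$$\sum_{s=1}^S\sum_{\gamma\in\Gamma_s}\gamma(p_s)=1.$$
Let $Z_s$ be the clopen base of $p_s$ and $n_s=\abs{\Gamma_s}$. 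For $\gamma,\gamma'\in\Gamma_s$, define matrix units
$$e^{(s)}_{\gamma,\gamma'}=u_{\gamma}^*\,p_s\,u_{\gamma'}$$
(up to the paper's left/right conventions). Together with the functions $\mathrm{C}(Z_s)\,p_s$ transported along the tower, they generate a copy of $\mathrm{M}_{n_s}(\mathrm{C}(Z_s))$. The levels are mutually orthogonal and sum to $1$, so
$$C=\bigoplus_{s=1}^S\mathrm{M}_{n_s}(\mathrm{C}(Z_s))$$
is a unital subalgebra. Its diagonal consists of the functions that are arbitrary continuous functions on each level $Z_s\gamma$. Since the levels form a clopen partition of $\Omega$, this diagonal is exactly $\mathrm{C}(\Omega)$, which gives (1) and (2). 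A clean way to justify the isomorphism is to note that for $x\in Z_s$ and $\gamma\in\Gamma_s$ the point $x\gamma$ determines $(s,x,\gamma)$ uniquely. The identification with a matrix algebra is then routine.

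Next construct $h$ as a tower function. For each $s$, the set $\Gamma_s^{\circ}=\{g\in\Gamma_s: gK\subseteq\Gamma_s\}$ has cardinality at least $(1-\abs{K}\delta)n_s$. Rather than a sharp cutoff (which would fail commutation), take a function $\phi_s:\Gamma_s\to[0,1]$ that is Lipschitz with respect to $K$-steps, meaning
$$\abs{\phi_s(g)-\phi_s(gk)}<\eps\quad\text{whenever } g,gk\in\Gamma_s,\ k\in K,$$
that vanishes outside $\Gamma_s^\circ$, and that equals $1$ on most of $\Gamma_s$. Set
$$h=\sum_s\sum_{\gamma\in\Gamma_s}\phi_s(\gamma)\,\gamma(p_s)\in\mathrm{C}(\Omega)\cap C.$$

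Then (4) follows because $[h,a u_k]=a\,(h-k(h))\,u_k$ in the appropriate order, and $h-k(h)$ is supported on levels where it is bounded by $\eps$ (after rescaling by $\abs{F}\max\norm{a_{i,\gamma}}$). For (5), note that $h\,a\,u_k\,h$ involves only pairs of levels $(g,gk)$ inside a single tower, since $\phi_s$ vanishes near the boundary. Hence it is a combination of matrix units with coefficients in $\mathrm{C}(Z_s)$, so it lies in $C$. For (6), observe that $1-h$ is a projection-supported function, so $\mathrm{d}_\tau(1-h)\le\tau(\text{support projection of }1-h)$. Every trace restricts to an invariant measure $\mu$, and invariance gives $\mu(Z_s\gamma)=\mu(Z_s)$. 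Summing, $\sum_s n_s\mu(Z_s)=1$, so the measure of the support of $1-h$ is at most
$$\sum_s\mu(Z_s)\,\abs{\{\phi_s<1\}}\le\max_s\frac{\abs{\{\phi_s<1\}}}{n_s}.$$

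The main obstacle is constructing $\phi_s$. It must change slowly along $K$-steps, yet equal $1$ on a proportion $\ge1-\eps$ of the tile. I would use iterated boundaries. Take $m>1/\eps$ and $L=K\cup K^{-1}\cup\{e\}$. Set $\phi_s(g)=\min\{1,\,j(g)/m\}$, where $j(g)$ is the largest $j$ with $gL^j\subseteq\Gamma_s$. This $\phi_s$ is $1/m$-Lipschitz along $K$-steps. The set where $\phi_s<1$ is contained in $\{g: gL^m\not\subseteq\Gamma_s\}$, which has size at most $\abs{L^m}\,\abs{\Gamma_s L^m\Delta\Gamma_s}$. This is small relative to $n_s$ if we apply the strong URP with the larger finite set $L^m$ and a tolerance $\delta\ll\eps/\abs{L^m}^2$. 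Choosing $m$ first, then $K'=L^m$ and $\delta$, makes all estimates close.
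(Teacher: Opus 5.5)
Your proposal is correct and follows essentially the same route as the paper. Both arguments approximate by finitely supported sums, take an exact Rokhlin partition with sufficiently invariant shapes, realize $C\cong\bigoplus_s \mathrm{M}_{n_s}(\mathrm{C}(Z_s))$ via the tower matrix units with diagonal $\mathrm{C}(\Omega)$, and take $h$ to be a normalized ``depth'' function built from the iterated interiors $\{g\in\Gamma_s: gL^j\subseteq\Gamma_s\}$; this is the paper's layered function $\frac{l-1}{L}$ on $\Gamma_{s,l}$.

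Your explicit count $\abs{\{g\in\Gamma_s: gL^m\not\subseteq\Gamma_s\}}\le\abs{L^m}\,\abs{\Gamma_sL^m\,\Delta\,\Gamma_s}$ just makes precise the paper's choice of $(K,\delta)$. The one point you leave implicit in (4) is what happens when a $k$-step leaves a tower: a point $x\in Z_s g$ with $gk\notin\Gamma_s$ is sent into a level $Z_{s'}g'$ with $g'\notin\mathrm{int}_L(\Gamma_{s'})$. There $h$ is also $0$, which your choice of $\phi_s$ vanishing off $\mathrm{int}_L(\Gamma_s)$ guarantees.
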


\begin{proof}

The proof is similar to the proof of Theorem 3.8 of \cite{Niu-MD-Z} (or Lemma 3.1 of \cite{Niu-MD-Z-absorbing}), but without dealing with mean dimension.

Without loss of generality, one may assume 
$$f_i=\sum_{\gamma \in\mathcal N} f_{i, \gamma}u_\gamma$$
for a finite set $\mathcal N \subseteq \Gamma$ with $e\in \mathcal N = \mathcal N^{-1}$, and $f_{i, \gamma} \in \mathrm{C}(\Omega)$, $\gamma \in \mathcal N$. (Then the elements $f'_i$, $i=1, ..., n$ in the statement of the proposition will be $f_i$, $i=1, ..., n$.)

Set
$$\max\{1, \norm{f_{i, \gamma}}: i=1, ..., n, \gamma\in\mathcal N\} = M.$$
Pick a natural number $$L > \frac{M\abs{\mathcal N}}{\eps},$$ and choose  a sufficiently large finite set $K\subseteq\Gamma$ and a sufficiently small  positive number $\delta$ that if a finite set $\Gamma_0\subseteq\Gamma$ is $(K, \delta)$-invariant, then
\begin{equation}\label{very-small-bd}
\frac{\abs{\Gamma_0 \setminus \mathrm{int}_{\mathcal N^{L+1}} (\Gamma_0)} }{\abs{\Gamma_0}} < \frac{\eps}{2},
\end{equation}
where $\mathrm{int}_F(E) := \{\gamma\in E, \gamma F \subseteq E\}$ for any finite sets $E, F \subseteq \Gamma$.

Since $(\Omega, \Gamma)$ has the stronger URP of Proposition \ref{S-URP}, there exist clopen sets $Z_1, Z_2, ..., Z_S \subseteq \Omega$ and $(K, \delta)$-invariant sets $\Gamma_1, \Gamma_2, ..., \Gamma_S \subseteq \Gamma$ such that the subsets 
$$Z_s\gamma,\quad \gamma\in \Gamma_s,\ s=1, ..., S, $$
are mutually disjoint and
\begin{equation}\label{partition-equ}
\Omega = \bigsqcup_{s=1}^S\bigsqcup_{\gamma\in \Gamma_s} Z_s\gamma.
\end{equation}

Consider the sub-C*-algebra
\begin{equation}\label{defn-C}
C:=\mathrm{C}^*\{ u^*_\gamma f: f\in\mathrm{C}(Z_s),\ \gamma\in \Gamma_s,\ s=1, 2, ..., S\}\subseteq \mathrm{C}(\Omega) \rtimes \Gamma,
\end{equation} 
which, by Lemma 3.11 of \cite{Niu-MD-Z}, is isomorphic to $$\bigoplus_{s=1}^S\mathrm{M}_{\abs{\Gamma_s}}(\mathrm{C}(Z_s)).$$
Since the base sets $Z_s$, $s=1, ..., S$, are clopen and the towers form a partition of unity (\eqref{partition-equ}), one has 
\begin{equation}\label{all-in-C}
\mathrm{C}(\Omega) \subseteq C.
\end{equation} 
Moreover, the isomorphism constructed in the proof of Lemma 3.11 of \cite{Niu-MD-Z}  sends $\mathrm{C}(\Omega)$ onto the diagonal subalgebra of $\bigoplus_{s=1}^S\mathrm{M}_{\abs{\Gamma_s}}(\mathrm{C}(Z_s))$. This shows (1) and (2).

For each $\Gamma_s$, $s=1, 2, ..., S$, define the subsets 
$$
\left\{
\begin{array}{lll}
\Gamma_{s, L+1} & = & \mathrm{int}_{\mathcal N^{L+1}} (\Gamma_s), \\
\Gamma_{s, L} & = & \mathrm{int}_{\mathcal N^{L}} (\Gamma_s) \setminus \mathrm{int}_{\mathcal N^{L+1}} (\Gamma_s), \\
\Gamma_{s, L-1} & = & \mathrm{int}_{\mathcal N^{L-1}} (\Gamma_s) \setminus \mathrm{int}_{\mathcal N^{L}} (\Gamma_s), \\
\vdots & \vdots & \vdots \\
\Gamma_{s, 0} & = & \Gamma_s \setminus \mathrm{int}_{\mathcal N} (\Gamma_s).
\end{array}
\right.
$$

Then, for any $\gamma \in\mathcal N$, one has 
\begin{equation}\label{action-nbhd}
\Gamma_{s, l}\gamma \subseteq  \Gamma_{s, l-1} \cup \Gamma_{s, l} \cup \Gamma_{s, l+1},\quad 1\leq l \leq L.  \end{equation}

Indeed,  pick an arbitrary $\gamma'\in \Gamma_{s, l}$. By construction, one has 
\begin{equation}\label{eq-contain} 
\gamma' \mathcal N^{l} \subseteq \Gamma_s\quad\mathrm{but}\quad \gamma' \mathcal N^{l+1} \nsubseteq \Gamma_s.
\end{equation} 
Therefore, $$ \gamma'\gamma \mathcal N^{l-1}  \subseteq \gamma' \mathcal N^l \subseteq \Gamma_s, $$ and hence $\gamma'\gamma \in \mathrm{int}_{\mathcal N^{l-1}}\Gamma_s$ (since $e\in\mathcal N^{l-1}$). 

Thus, to show \eqref{action-nbhd}, one only has to show that $\gamma'\gamma \notin \mathrm{int}_{\mathcal N^{l+2}}\Gamma_s$. Suppose $\gamma'\gamma\mathcal N^{l+2} \subseteq \Gamma_s$. Since $\mathcal N$ is symmetric, one has $\gamma^{-1}\in \mathcal N$; hence $\mathcal N^{l+1} \subseteq \gamma \mathcal N^{l+2}$ and 
$$\gamma' \mathcal N^{l+1} \subseteq \gamma'\gamma \mathcal N^{l+2} \subseteq \Gamma_s,$$ which contradicts \eqref{eq-contain}.

Also note that
\begin{equation}\label{action-nbhd-L-end}
\Gamma_{s, L+1}\gamma \subseteq  \Gamma_{s, L+1} \cup \Gamma_{s, L}.
\end{equation}

For each $\gamma \in \Gamma_s$, define
$$\ell(\gamma) = l,\quad\textrm{if $\gamma \in \Gamma_{s, l}$}.$$ By \eqref{action-nbhd} and \eqref{action-nbhd-L-end}, the function $\ell$ satisfies 
\begin{equation}\label{action-nbhd-fn}
\abs{\ell(\gamma'\gamma) - \ell(\gamma)} \leq 1,\quad \gamma'\in \mathcal N,\ \gamma\in \Gamma_{s, 1}\cup\cdots\cup\Gamma_{s, L+1}.
\end{equation}

Consider the function 
$$h := \sum_{s=1}^S\sum_{l=1}^{L+1} \sum_{\gamma\in \Gamma_{s, l}} \frac{l-1}{L}(\chi_{Z_s}\circ \gamma^{-1}) = \sum_{s=1}^S \sum_{l=1}^{L+1} \sum_{\gamma\in \Gamma_{s, l}} \frac{l-1}{L} u^*_\gamma\chi_{Z_s} u_\gamma \in\mathrm{C}(\Omega)\cap C.$$

By \eqref{very-small-bd}, for all $\mu \in \mathcal M_1(\Omega, \Gamma)$,
$$
 \mu(X \setminus h^{-1}(1))  \leq  \max\{\frac{\abs{\Gamma_s \setminus \mathrm{int}_{\mathcal N^{L+1}} (\Gamma_s)} }{\abs{\Gamma_s}} : s=1, ..., S \}  < \eps,
$$
and therefore
$$\mathrm{d}_{\tau}(1-h) < \eps,\quad \tau \in \mathrm{T}(A).$$
This shows $(6)$.

Note that, by the construction of $C$ (see \eqref{defn-C}), $$\chi_{Z_s} u_\gamma \in C,\quad \gamma \in\Gamma_s.$$
Hence, for each $\gamma'\in \mathcal N$, since $\gamma\gamma' \in \Gamma_s$, $\gamma\in \Gamma_{s, l}$, $l=1, 2, ..., L+1$, one has
$$ hu_{\gamma'} = \sum_{s=1}^S \sum_{l=1}^{L+1} \sum_{\gamma\in \Gamma_{s, l}} \frac{l-1}{L} u^*_\gamma\chi_{Z_s} u_{\gamma\gamma'}  = \sum_{s=1}^S \sum_{l=1}^{L+1} \sum_{\gamma\in \Gamma_{s, l}} \frac{l-1}{L} (u^*_\gamma\chi_{Z_s}) (\chi_{Z_s} u_{\gamma\gamma'})  \in C,$$
and therefore, 
$$hu_\gamma h \in C,\quad \gamma \in \mathcal N.$$ 
Hence, by \eqref{all-in-C},  
$$hf_i h\in C,\quad 1 \leq i\leq n.$$
This shows (5).

Note that, for each $\gamma' \in\mathcal N$, by \eqref{action-nbhd-fn}, 
\begin{eqnarray*}
&& \norm{u^*_{\gamma'}hu_{\gamma'} - h}  \\
& = & \norm{ \sum_{s=1}^S\sum_{l=1}^{L+1} \sum_{\gamma\in \Gamma_{s, l}} \frac{l-1}{L}\chi_{Z_s}\circ (\gamma'\gamma)^{-1}  -  \sum_{s=1}^S\sum_{l=1}^{L+1} \sum_{\gamma\in \Gamma_{s, l}} \frac{l-1}{L} \chi_{Z_s}\circ \gamma^{-1}} \\
& = & \max\{ \abs{ \frac{\ell(\gamma'\gamma)-1}{L} - \frac{\ell(\gamma)-1}{L}}: \gamma\in\Gamma_s\setminus\Gamma_{s, 0},\ s=1, 2, ..., S \}\\
& < & \frac{1}{L}  <  \frac{\eps}{M\abs{\mathcal{N}}},
\end{eqnarray*}
and hence
\begin{equation}\label{pre-comm-p}
\norm{hf_i - f_i h} < {\eps},\quad i=1, 2, ..., n.
\end{equation}
This shows (4).
\end{proof}

\subsection{Property (COS)}
Recall (\cite{Niu-MD-Z}) that a topological  dynamical system $(X, \Gamma)$ is said to have $(\lambda, m)$-Cuntz comparison on open sets, where $\lambda \in (0, +\infty)$ and $m \in \mathbb N$, if for all $f, g \in \mathrm{C}(X)^+$ such that
$$\mathrm{d}_\tau(f) < \lambda \mathrm{d}_\tau(g), \quad \tau \in \mathcal M_1(X, \Gamma),$$
one has $f \precsim 1_m \otimes g $ in $\mathrm{C}(X) \rtimes \Gamma$, %, where $\varphi_E$ (or $\varphi_F$) is a positive continuous function on $X$ with open support $E$ (or $F$), and 
where $\mathcal M_1(X, \Gamma)$ is the set of all invariant probability Borel measures on $X$. (The property (COS) in \cite{Niu-MD-Z} was formulated using open sets.)

%Recall (\cite{Niu-MD-Z}) that a topological  dynamical system $(X, \Gamma)$ is said to have $(\lambda, m)$-Cuntz comparison on open sets, where $\lambda \in (0, +\infty)$ and $m \in \mathbb N$, if for all open sets $E, F \subseteq X$ such that
%$$\mu(E) < \lambda \mu(F), \quad \mu \in \mathcal M_1(X, \Gamma),$$
%one has $\varphi_E \precsim 1_m \otimes \varphi_F $ in $\mathrm{C}(X) \rtimes \Gamma$, where $\varphi_E$ (or $\varphi_F$) is a positive continuous function on $X$ with open support $E$ (or $F$), and $\mathcal M_1(X, \Gamma)$ is the set of all invariant probability Borel measures on $X$.

The property (COS) can be considered for general pairs of C*-algebras: Let $A$ be a unital C*-algebra and let $D \subseteq A$ be a unital sub-C*-algebra. Then $(D, A)$ is said to have the property $(\lambda, m)$-(COS) if for all positive contractions $a, b \in D$ such that
$$\mathrm{d}_\tau(a) < \lambda \mathrm{d}_\tau(b), \quad \tau \in \tr(A), $$
one has $a \precsim 1_m \otimes b $ in $A$. 

One should compare the property (COS) to the dynamical comparison property considered in \cite{KS-comparison} and \cite{DZ-comparison}. It is straightforward to check that the dynamical comparison property implies the property (COS), but it is not clear if the converse holds. 

%Since the approximating sub-C*-algebras of Proposition \ref{urp} are unital, using Lemma \ref{matrix-C}, one obtains the following (COS) property:

With the Rokhlin partition property of Proposition \ref{S-URP}, the property (COS)  follows directly from Corollary 8.8 of \cite{Niu-MD-Z} or Theorem 6.3 of \cite{DZ-comparison} for the case that $\Omega$ is totally disconnected. For the reader's convenience, we provide a proof below.

\begin{prop}[Corollary 8.8 of \cite{Niu-MD-Z}, Theorem 6.3 of \cite{DZ-comparison}]\label{cos}
Let $(\Omega, \Gamma)$ be a dynamical system which has the strong URP of Definition \ref{defn-S-URP} (i.e., Rokhlin towers form a partition of unity). Then the pair of C*-algebras $(\mathrm{C}(\Omega), \mathrm{C}(\Omega) \rtimes \Gamma)$  has the property $(\frac{1}{4}, 1)$-(COS).

Moreover, if $\Omega$ is totally disconnected (as in the case that $\Omega = \widehat{l^\infty(\Gamma)} = \beta\Gamma$ or $\Omega$ is the universal minimal set of $\Gamma$), $(\mathrm{C}(\Omega), \mathrm{C}(\Omega) \rtimes \Gamma)$  has the property $(1, 1)$-(COS) (indeed, it has the dynamical comparison property).

%More generally, for any $n \in \mathbb N$, and for any positive diagonal elements $a, b \in \mathrm{M}_n(\mathrm{C}(\Omega))$, 
\end{prop}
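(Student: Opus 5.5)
Given positive contractions $a,b\in\mathrm{C}(\Omega)$ with $\mathrm{d}_\tau(a)<\frac14\mathrm{d}_\tau(b)$ for all $\tau\in\tr(A)$, the plan is to show $(a-\eps)_+\precsim b$ for every $\eps>0$ and then invoke Lemma \ref{sub-lem-Cu}. The idea is to move the comparison into one of the homogeneous subalgebras $C=\bigoplus_s \mathrm{M}_{|\Gamma_s|}(\mathrm{C}(Z_s))$ of Proposition \ref{urp}. There $\mathrm{C}(\Omega)$ sits as the diagonal, so $a$ and $b$ are diagonal elements, and Lemma \ref{matrix-C} (or Lemma \ref{matrix-C-0} when $\Omega$ is totally disconnected) applies fiberwise.

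\textbf{Step 1: uniform gap and open sets.} Traces of $A$ restrict to invariant measures, and every invariant measure extends to a trace. So the hypothesis reads $\mu(U_a)<\frac14\mu(U_b)$ for all $\mu\in\mathcal M_1(\Omega,\Gamma)$, where $U_a,U_b$ are the open supports. Replace $a$ by $(a-\eps)_+$, whose closed support $F_a\subseteq U_a$ is compact. The map $\mu\mapsto \mu(U_b)-4\mu(F_a)$ is lower semicontinuous on the weak$^*$ compact set $\mathcal M_1$, so there is $\eta>0$ with $4\mu(F_a)+\eta<\mu(U_b)$. Using inner regularity together with compactness of $\mathcal M_1$ (a Dini-type argument over an increasing net of compacts), we can also shrink $U_b$ to $b'=(b-\delta)_+$ with closed support $F_b$ so that $4\mu(F_a)+\eta/2<\mu(\{b'>0\})$ uniformly in $\mu$.

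\textbf{Step 2: pass to towers by an ergodic-averaging estimate.} Take a strong URP partition with $(K,\delta')$-invariant shapes $\Gamma_s$, with $K,\delta'$ chosen so large and small that for every $x\in Z_s$,
$$\frac{1}{|\Gamma_s|}\#\{\gamma\in\Gamma_s: x\gamma\in F_a\}<\tfrac14\cdot\frac{1}{|\Gamma_s|}\#\{\gamma\in\Gamma_s: x\gamma\in\{b'>0\}\},$$
and also so that the right-hand count exceeds $4$ (we may take $|\Gamma_s|$ huge since $\mu(\{b'>0\})\ge\eta/2$). This is the main obstacle. The approach is the standard contradiction argument: if the inequality failed for a sequence of increasingly invariant towers at points $x_n$, then the empirical measures $\frac1{|\Gamma_{s_n}|}\sum_{\gamma}\delta_{x_n\gamma}$ would accumulate at an invariant measure $\mu$. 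Upper semicontinuity for the closed set $F_a$ and lower semicontinuity for the open set $\{b'>0\}$ would then give $4\mu(F_a)\ge\mu(\{b'>0\})$, contradicting Step 1. The $\eta$-margin absorbs the boundary discrepancies.

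\textbf{Step 3: conclude in $C$.} In the coordinates of $C$, the fiber of $(a-\eps)_+$ over $x\in Z_s$ is $\mathrm{diag}((a-\eps)_+(x\gamma))_{\gamma\in\Gamma_s}$, whose rank is the count of $\gamma$ with $x\gamma\in$ its open support. This count is at most the $F_a$-count, and similarly for $b'$. Lemma \ref{matrix-C}, applied summandwise, gives $(a-\eps)_+\precsim b'\le b$ in $C\subseteq A$, which yields $(\frac14,1)$-(COS).

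In the totally disconnected case the same argument goes through with constant $1$, using Lemma \ref{matrix-C-0}. Now only the fiberwise strict inequality of ranks is needed, and no lower bound $>4$. The alternative is to note that Step 2 directly yields the dynamical comparison of \cite{DZ-comparison}: clopen pieces $Z_s\gamma$ are permuted inside towers.
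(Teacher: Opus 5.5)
Your proposal is correct and follows essentially the same route as the paper. You reduce to $(a-\eps)_+$ with compact support, obtain the uniform local-density inequality over sufficiently invariant shapes by the contradiction argument with empirical measures (upper semicontinuity on the closed set, lower semicontinuity on the open set), and then read off fiberwise ranks in the diagonal of $C=\bigoplus_s \mathrm{M}_{\abs{\Gamma_s}}(\mathrm{C}(Z_s))$ to apply Lemma \ref{matrix-C} or Lemma \ref{matrix-C-0}. The uniform gap $\eta$ and the shrinking of $b$ to $(b-\delta)_+$ are harmless but unnecessary: the limiting invariant measure already gives $\mu(U_b)\le 4\mu(F_a)\le 4\mu(U_a)$, which contradicts the strict hypothesis directly. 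The paper needs only a separate uniform lower bound on $\mu(U_b)$ to get ranks $>4$.
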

\begin{proof}
Let $f, g \in \mathrm{C}(\Omega)^+$ be positive elements such that $$\mathrm{d}_\mu(f) < \frac{1}{4} \mathrm{d}_\mu(g), \quad \mu \in \mathcal M_1(\Omega, \Gamma).$$
Denote by $E$ and $F$ the open supports of $f$ and $g$ respectively. Then 
%Let $E, F \subseteq \Omega$ be open sets such that
\begin{equation}\label{mean-condition}
\mu(E) < \frac{1}{4} \mu(F), \quad  \mu \in \mathcal M_1(\Omega, \Gamma). 
\end{equation}
To prove the assertion, it is enough to show that for any $\eps>0$, one has
$$(f - \eps)_+ \precsim g.$$ %(Recall that $\varphi_E$ (or $\varphi_F$) is a positive continuous function on $X$ with $E$ (or $F$) being its open support.)

First note that $\mu(F) > 0$ for all $\mu$. Since the set of invariant measures is compact, there is $\delta>0$ such that 
\begin{equation}\label{lbd-dense-pre}
\mu(F)>\delta,\quad  \mu \in \mathcal M_1(\Omega, \Gamma). 
\end{equation}

For the given $\eps$, pick a compact set $E' \subseteq E$ such that 
$$(f - \eps)_+(x) = 0 ,\quad x \notin E'. $$ Then, we assert that there is $(K, \delta)$ such that if a finite set $\Gamma' \subseteq \Gamma$ is $(K, \delta)$-invariant, then, for all $x \in \Omega$, 
\begin{equation}\label{local-density}
\frac{1}{\abs{\Gamma'}}\abs{\{\gamma \in \Gamma': x \gamma \in E'\}} < \frac{1}{4}  \frac{1}{\abs{\Gamma'}}\abs{\{\gamma \in \Gamma': x \gamma \in F\}},
\end{equation}
and
\begin{equation}\label{lbd-dense}
\delta <  \frac{1}{\abs{\Gamma'}}\abs{\{\gamma \in \Gamma': x\gamma \in F\}}.
\end{equation}

Let us only show \eqref{local-density} (for suitable $(K, \delta)$), using \eqref{mean-condition};  \eqref{lbd-dense} can be proved in a similar way (by using \eqref{lbd-dense-pre}). 
If \eqref{local-density} did not hold (for any $(K, \delta)$), there would exist a F{\o}lner sequence $\Gamma_n$, $n=1, 2, ...$, such that for each $\Gamma_n$, there is $x_n \in \Omega$ such that 
\begin{equation}\label{eq-02-003}
\frac{1}{\abs{\Gamma_n}}\abs{\{\gamma \in \Gamma_n: x_n\gamma \in E'\}} \geq \frac{1}{4}  \frac{1}{\abs{\Gamma_n}}\abs{\{\gamma \in \Gamma_n: x_n\gamma \in F\}}.
\end{equation}

Consider the discrete measures
$$
\mu_n:=\frac{1}{\abs{\Gamma_{n}}} \sum_{\gamma \in \Gamma_{n}} \delta_{x_n \gamma},\quad n=1, 2, ...,
$$
and pick an accumulation point, say $\mu_\infty$. Since $(\Gamma_n)$ is a F{\o}lner sequence, $\mu_\infty$ is invariant under the action of $\Gamma$.  
By \eqref{eq-02-003},
$$\mu_n(F) \leq 4 \mu_n(E'),\quad n=1, 2, ...,$$
and hence (note that $F$ is open and $E'$ is closed), 
\begin{eqnarray*}
\mu_\infty(F) & \leq & \liminf_{n \to \infty} \mu_n(F) \\
& \leq & 4 \liminf_{n \to \infty} \mu_n(E') \\
& \leq & 4 \limsup_{n \to \infty} \mu_n(E') \\
& \leq & 4 \mu_\infty(E'),
\end{eqnarray*}
which contradicts \eqref{mean-condition}, and so \eqref{local-density}  holds. 

Now, with $(K, \delta)$ as assured above, by (1) and (2) of Proposition \ref{urp}, there is a unital sub-C*-algebra $\mathrm{C}(\Omega) \subseteq C \subseteq \mathrm{C}(\Omega) \rtimes \Gamma$ such that
$$C = \bigoplus_{s = 1}^S \mathrm{M}_{\abs{\Gamma_s}}(\mathrm{C}(Z_s)),$$
with $\mathrm{C}(\Omega)$ the diagonal  subalgebra, for $(K, \delta)$-invariant subsets $\Gamma_s$, $s=1, ..., S$, with $\abs{\Gamma_s} > 4/\delta$. 
By \eqref{local-density}, one has, for all $x \in \bigsqcup_{s = 1}^S Z_s$, 
\begin{eqnarray*}
\mathrm{rank}(f - \eps)_+(x) & = &\abs{\{\gamma \in \Gamma_s: (f - \eps)(x\gamma) > 0 } \\
& \leq &\abs{\{\gamma \in \Gamma_s: x\gamma \in E'  \} } \\
& < & \frac{1}{4}\abs{\{\gamma \in \Gamma_s: x\gamma \in F  \} } \\
& = & \frac{1}{4} \mathrm{rank}(g(x)).
\end{eqnarray*}  
By \eqref{lbd-dense}, 
$$ 4 <  \delta \abs{\Gamma_s} <  \abs{\{\gamma \in \Gamma_s: x\gamma \in F  \} } = \mathrm{rank}(g(x)). $$
Then, by Lemma \ref{matrix-C}, 
$$ (f - \eps)_+ \precsim g,$$
as desired.

If $\Omega$ is totally disconnected, the base sets $Z_1, ..., Z_S$ are also totally disconnected. Then, on using Lemma \ref{matrix-C-0} instead of Lemma \ref{matrix-C}, the argument above shows that the dynamical system $(\mathrm{C}(\Omega), \mathrm{C}(\Omega) \rtimes \Gamma)$ actually has the property $(1, 1)$-(COS).

If, moreover, $f$ and $g$ are projections (hence $E$ and $F$ are clopen sets), the same argument above and the second part of Lemma \ref{matrix-C-0} provide a unitary $u \in C$ such that $u$ is locally a permutation unitary, $u^*fu$ is a diagonal projection, and $$u^*fu \leq g.$$ 

Since permutations are induced by group actions and diagonal projections correspond to clopen subsets of $\Omega$, the subequivalence above implies that there are a partition $E=E_1\sqcup \cdots \sqcup E_n$ and group elements $\gamma_1, ..., \gamma_n$ such that 
$$E_i\gamma_i \subseteq F \quad \mathrm{and} \quad E_i\gamma_i \cap E_j\gamma_j = \O, \quad i, j=1, ..., n,\ i\neq j. $$ So, $(\Omega, \Gamma)$ has dynamical comparison in this case.
\end{proof}

\section{Strict comparison}
Consider a dynamical system $(\Omega, \Gamma)$ with the URP and (COS). Assume that $\Omega$ is totally disconnected (in particular, $(\Omega, \Gamma)$ has zero mean dimension). If $(\Omega, \Gamma)$ is free and minimal, and $\Omega$ is metrizable, then, by Theorem 4.8 of  \cite{Niu-MD-Z}, the C*-algebra $\mathrm{C}(\Omega) \rtimes \Gamma$ has strict comparison and is $\mathcal Z$-absorbing.

%Hence $\mathrm{C}(M)\rtimes\Gamma$ has strict comparison, where $(M, \Gamma)$ is the universal minimal set of a discrete amenable group $\Gamma$.

In this section, we shall show that strict comparison holds in a more general setting. In particular, the C*-algebras $l^\infty(\Gamma) \rtimes \Gamma$ and $\mathrm{C}(M) \rtimes \Gamma$ have strict comparison. Moreover, the simple C*-algebra $\mathrm{C}(M) \rtimes \Gamma$ is an increasing union of separable $\mathcal Z$-absorbing AH algebras with real rank zero. So, it has stable rank one and real rank zero, and is approximately divisible. (As mentioned, see also \cite{Suzuki-sr1} for the results for $M$, except approximate divisibility.)

\subsection{Strict comparison}
\begin{lem}\label{sub-dom}
Let $(\Omega, \Gamma)$ be a dynamical system which has the strong URP of Definition \ref{defn-S-URP} (i.e., Rokhlin towers form a partition of unity). Assume that $\Omega$ is totally disconnected, and $\abs{\Gamma} = \infty$.

Let $a \in A = \mathrm{C}(\Omega)\rtimes\Gamma$ be a positive contraction  such that $$\tau(a) > 0,\quad \tau \in \tr(A). $$ Then there is a positive element $h \in \mathrm{C}(\Omega)$ such that $$ h \precsim a \quad \textrm{and} \quad \tau(h) > 0, \quad \tau \in \tr(A). $$

Moreover, $h$ can be chosen such that $0$ is an isolated point of $\mathrm{sp}(h)$, and for any given $\eps>0$, $h$ can be chosen such that also $$\mathrm{d}_\tau(h) < \eps,\quad \tau \in \tr(A). $$
\end{lem}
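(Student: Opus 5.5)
The plan is to push $a$ into one of the homogeneous subalgebras $C$ of Proposition~\ref{urp}, and then extract from the image a diagonal projection, i.e.\ a characteristic function of a clopen set in $\mathrm{C}(\Omega)$. First, $\tr(A)$ is weak$^*$ compact and $\tau \mapsto \tau(a)$ is continuous, so there is $c>0$ with $\tau(a) > c$ for all $\tau \in \tr(A)$. Apply Proposition~\ref{urp} to the single element $a$ with a small tolerance $\eta$. In the proof of that proposition one may take $f_1' = a$ after first replacing $a$ by a nearby finite sum; a perturbation of norm less than $\eta$ is absorbed at the end using Lemma~\ref{sub-lem-Cu}. This gives $C = \bigoplus_s \mathrm{M}_{n_s}(\mathrm{C}(Z_s))$ with diagonal $\mathrm{C}(\Omega)$, and a positive contraction $h_0 \in \mathrm{C}(\Omega)\cap C$ with $b := h_0 a h_0 \in C$ and $\mathrm{d}_\tau(1-h_0) < \eta$.

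Next I bound $\tau(b)$ from below. Since $0\le 1-h_0^2 \le$ the support projection of $1-h_0$ in the bidual, and $a$ is a contraction,
\[
\tau(a) - \tau(b) = \tau\bigl(a^{1/2}(1-h_0^2)a^{1/2}\bigr) \le \mathrm{d}_\tau(1-h_0) < \eta .
\]
Hence, for $\eta$ small, $\tau(b) > c/2$ for all $\tau$. Also $b \precsim a$, so $(b-\eta)_+ \precsim a$ as well, and $\tau((b-\eta)_+) > c/2 - \eta$.

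Now I work fibrewise in $C$. Put $W = \{x \in \bigsqcup_s Z_s : \norm{b(x)} > 2\eta\}$. This set is open, and since $\Omega$ is totally disconnected I can choose a clopen $W' \subseteq W$ large enough that the part of $\tau(b)$ carried off $W'$ is small; this uses compactness of $\tr(A)$ once more. On $W'$, the space is zero-dimensional, so I partition $W'$ into finitely many clopen pieces on each of which $(b-\eta)_+$ has a spectral gap near some level in $(\eta, 2\eta)$. On each piece the corresponding spectral projection $P$ is continuous, $P \precsim (b-\eta)_+$, and $\mathrm{rank}\,P \ge 1$. Over a compact zero-dimensional space, projections in $\mathrm{M}_n(\mathrm{C}(Z))$ of constant equal rank are Murray--von Neumann equivalent (after refining the partition so the rank is constant), so the diagonal rank-one projection $e_{11}\chi_{W'\cap Z_s}$ is $\precsim P$. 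Via the identification of the diagonal with $\mathrm{C}(\Omega)$, this rank-one projection is a characteristic function $h := \chi_V \in \mathrm{C}(\Omega)$ of a clopen set $V$ (the first level of the towers over $W'$), and $h \precsim (b-\eta)_+ \precsim a$. Being a projection, $h$ has $0$ isolated in $\mathrm{sp}(h)$.

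The main obstacle is positivity of $\tau(h)$, uniformly in $\tau$. For an invariant measure $\mu$ associated with $\tau$,
\[
\tau(b) \le \sum_s \int_{Z_s} \mathrm{Tr}(b(x))\, d\mu \le \sum_s n_s\, \mu(Z_s \cap W) + \text{(small)} .
\]
Thus $\tau(h) = \sum_s \mu(Z_s\cap W') \ge (c/4)/\max_s n_s > 0$, a bound independent of $\tau$. For the final requirement,
\[
\mathrm{d}_\tau(h) = \tau(h) \le \sum_s \mu(Z_s) \le \frac{1}{\min_s \abs{\Gamma_s}} .
\]
Since $\abs{\Gamma}=\infty$, $(K,\delta)$-invariant sets can be forced to be arbitrarily large by enlarging $K$, so we may arrange $\min_s\abs{\Gamma_s} > 1/\eps$ when invoking Proposition~\ref{urp}. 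This gives $\mathrm{d}_\tau(h) < \eps$.
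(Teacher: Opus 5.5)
Your argument is correct, but it takes a different route from the paper. The paper does not use Proposition~\ref{urp} at all. It works with the conditional expectation $\mathbb E$ onto $\mathrm{C}(\Omega)$, so that $\tau(\mathbb E(a))=\tau(a)$, and approximates $a$ by a finite sum $a'=\sum_{\gamma\in\mathcal G}f_\gamma u_\gamma$. It then chooses a Rokhlin partition invariant enough that, along every tower, the orbit average of $(\mathbb E(a')-\delta')_+$ exceeds $\delta/2$. Zero-dimensionality is used only to split the towers into thinner ones, each with a single level $Z_{s,n}\gamma_{s,n}$ in the $\mathcal G$-interior on which $(\mathbb E(a')-\delta')_+>\delta/4$. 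For the projection $p$ onto these levels one has $pu_\gamma p=0$ for $\gamma\in\mathcal G\setminus\{e\}$. Hence $pa'p=p\mathbb E(a')p$, and $h=(pa'p-\delta')_+=p(\mathbb E(a')-\delta')_+p\in\mathrm{C}(\Omega)$ is automatically $\precsim pap\precsim a$.

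You instead cut $a$ into the homogeneous algebra $C$ using the element $h_0$ of Proposition~\ref{urp}, and bound the trace loss by $\mathrm{d}_\tau(1-h_0)$. You then do fibrewise spectral theory in $C$, using zero-dimensionality twice: for continuity and locally constant rank of $\chi_{(t,\infty)}(b)$, and for equivalence of equal-rank projections. You also need the formula $\tau|_C=\sum_s\int_{Z_s}\mathrm{Tr}(\cdot)\,d\mu$. This is heavier than the paper's one-stroke removal of the off-diagonal part. On the other hand, it produces a genuine projection $h=\chi_V$ from the outset. That is exactly the form invoked later in Lemma~\ref{full-element} and Proposition~\ref{main-thm}; from the paper's $h$, which only has $0$ isolated in its spectrum, one must pass to the support projection.

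A few small points to tidy:
\begin{enumerate}
\item The gap level $t\in(\eta,2\eta)$ should be placed in the spectrum of $b$, not of $(b-\eta)_+$. Then $(b-\eta)_+\ge(t-\eta)P$ and $\mathrm{rank}\,P\ge1$ on $W'$.
\item The displayed trace bound should be stated with $W'$ rather than $W$. The simplest choice is a clopen $W'$ with $\{x:\norm{b(x)}\ge 3\eta\}\subseteq W'\subseteq W$. This bounds the contribution off $W'$ by $3\eta\sum_s n_s\mu(Z_s)=3\eta$, with no further compactness argument on $\tr(A)$.
\item The passage from $a$ to its finite-sum approximant is handled by $\norm{x-y}<\eta\Rightarrow(x-\eta)_+\precsim y$, giving $(b-\eta)_+\precsim h_0ah_0\precsim a$. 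It is not handled by Lemma~\ref{sub-lem-Cu}.
\end{enumerate}
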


\begin{proof}
%If $\abs{\Gamma} < +\infty$, then, by letting the shape of the Rokhlin tower to be $\Gamma$, the dynamical system $(\Omega, \Gamma)$ is isomorphic to $(Z \times \Gamma, \Gamma)$, where $Z$ is a compact Hausdorff space, and $\Gamma$ actions on $Z \times \Gamma$ by translation. Then $A \cong \mathrm{M}_{\abs{\Gamma}}(\mathrm{C}(Z))$.
%
Let $\eps>0$ be given. Since $\tr(A)$ is compact, there is $\delta>0$ such that $$\tau(\mathbb E(a)) = \tau(a) > \delta,\quad \tau \in \tr(A). $$ 
Choose $\delta' < \frac{\delta}{4}$, and choose a positive contraction $a' = \sum_{\gamma \in \mathcal G} f_\gamma u_\gamma$, where $\mathcal G \subseteq \Gamma$ is a finite set such that $\mathcal G = \mathcal G^{-1}$, such that 
\begin{equation}\label{alg-approx}
\norm{a - a'} < \delta' . 
\end{equation}
Then 
$$ \tau((\mathbb E(a') - \delta')_+) \approx_{\delta'} \tau(\mathbb E(a')) \approx_{\delta'} \tau(\mathbb E(a)) > \delta,$$
and hence
$$ \tau( (\mathbb E(a') - \delta')_+ ) > \delta - 2\delta' > \frac{\delta}{2}, \quad \tau \in \tr(A).$$

Since $(\Omega, \Gamma)$ has the strong version of the Rokhlin property,  a compactness argument (applied to $\tr(A)$) shows that there is a Rokhlin partition $(Z_s, \Gamma_s)$, $s=1, ..., S$, such that each $\Gamma_s$ is sufficiently invariant that 
\begin{equation}\label{avg-1}
\frac{1}{\abs{\Gamma_s}} \sum_{\gamma\in \Gamma_s} (\mathbb E(a') - \delta')_+(x \gamma) > \frac{\delta}{2}, \quad x\in Z_s,
\end{equation}
\begin{equation}\label{avg-2}
\frac{\abs{\partial_{\mathcal G} \Gamma_s}}{\abs{\Gamma_s}} < \frac{\delta}{4},\quad s=1, ..., S,
\end{equation}
and
\begin{equation}\label{large-G}
\abs{\Gamma_s} > \frac{1}{\eps},\quad s=1, ..., S.
\end{equation}

Since each $Z_s$ is totally disconnected and the function $(\mathbb E(a') - \delta')_+$ is continuous, by \eqref{avg-1} and \eqref{avg-2}, for each $x \in Z_s$, there are $\gamma_x \in \mathrm{int}_{\mathcal G}(\Gamma_s)$ and a clopen neighbourhood $Z_{s, x} \subseteq Z_s$ such that 
$$(\mathbb E(a') - \delta')_+(y\gamma_{x}) > \frac{\delta}{2} - \frac{\delta}{4} =\frac{\delta}{4},\quad y \in Z_{s, x}.$$
Then another compactness argument (applied to $Z_s$) shows that there exist a partition $Z_s = Z_{s, 1} \sqcup \cdots \sqcup Z_{s, N_s}$ and $\gamma_{s, 1}, ..., \gamma_{s, N_s} \in \mathrm{int}_{\mathcal G}(\Gamma_s)$ such that 
$$(\mathbb E(a') - \delta')_+(x\gamma_{s, n}) > \frac{\delta}{4},\quad x \in Z_{s, n},\ n=1, ..., N_s.$$
Therefore, the tower $(Z_s, \Gamma_s)$ can be split into thinner towers, 
$$(Z_{s, 1}, \Gamma_s), ..., (Z_{s, N_s}, \Gamma_s), $$ 
such that for each $(Z_{s, n}, \Gamma_s)$, there is $\gamma_{s, n} \in \mathrm{int}_{\mathcal G}(\Gamma_s)$ such that 
\begin{equation}\label{compact-cond}
(\mathbb E(a') - \delta')_+(x) > \frac{\delta}{4}, \quad x\in Z_{s, n}\gamma_{s, n}. \end{equation} 

Consider the projection 
$$ p := \sum_{s=1}^S\sum_{n=1}^N \chi_{Z_{s, n}\gamma_{s, n}}$$ 
corresponding to the level sets $$Z_{s, n} \gamma_{s, n}, \quad n=1, ..., N_s, \ s = 1, ..., S.$$
It follows from \eqref{large-G} that
\begin{equation}\label{small-ubd}
\tau(p) < \max\{\frac{1}{\abs{\Gamma_s}}: s=1, ..., S \} <\eps,
\end{equation}
and it follows from \eqref{compact-cond} that 
\begin{equation}\label{trace-positive} 
\tau(p(\mathbb E(a') - \delta')_+p) > \frac{\delta}{4\abs{\Gamma_s}} > 0 ,\quad \tau \in \tr(A).
\end{equation}
Note that, since $\gamma_{s, n} \in \mathrm{int}_{\mathcal G}(\Gamma_s)$ one has
$$pu_\gamma p = 0,\quad \gamma \in \mathcal G\setminus\{e\},$$
and hence 
$$pa'p = p\mathbb E(a')p.$$
Then, since $p$ is a projection and $p$ commutes with $\mathbb E(a')$, 
\begin{equation}\label{move-p-inside}
 (pa'p - \delta')_+  = ( p\mathbb E(a') p - \delta')_+ = p( \mathbb E(a') - \delta')_+p \in l^\infty(\Gamma). 
\end{equation} 

By \eqref{alg-approx}, 
$$pa'p \approx_{\delta'} pap,$$
and hence 
$$(pa'p - {\delta'})_+ \precsim pap \precsim a.$$
By \eqref{move-p-inside} and \eqref{trace-positive}, 
$$ \tau((pa'p - \delta')_+)  =  \tau( p(\mathbb E(a') - \delta')_+p) >0, \quad \tau \in \tr(A).$$
By \eqref{compact-cond}, $0$ is an isolated point of the spectrum of $(pa'p - {\delta'})_+$. By \eqref{small-ubd}, $$\tau((pa'p - \delta')_+) \leq \tau(p) < \eps,\quad \tau \in \tr(A).$$
Thus, $h:= (pa'p - {\delta'})_+$ is the desired element.
\end{proof}

\begin{lem}\label{full-element}
Let $(\Omega, \Gamma)$ be a dynamical system which has the strong URP of Definition \ref{defn-S-URP} (i.e., Rokhlin towers form a partition of unity). Assume that $\Omega$ is totally disconnected, and $\abs{\Gamma} = \infty$.

Let $A = \mathrm{C}(\Omega) \rtimes \Gamma$, and let $a \in A$ be a positive element such that $$\tau(a) > 0,\quad \tau \in \tr(A).$$ Then $a$ is full.
\end{lem}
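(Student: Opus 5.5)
Fullness means the closed two-sided ideal generated by $a$ is all of $A$; equivalently, $1_A \precsim 1_m \otimes a$ for some $m \in \mathbb N$, i.e.\ $1_A$ is Cuntz dominated by a finite direct sum of copies of $a$. The plan is to replace $a$ by a commutative element using Lemma \ref{sub-dom}, and then use the $(1,1)$-(COS) of Proposition \ref{cos} to dominate the unit by finitely many copies of it.

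\emph{Step 1: pass to $\mathrm{C}(\Omega)$.} Rescale so that $a$ is a positive contraction; this does not change the ideal it generates. By Lemma \ref{sub-dom} there is a positive $h \in \mathrm{C}(\Omega)$ with $h \precsim a$ and $\tau(h) > 0$ for all $\tau \in \tr(A)$, and with $0$ an isolated point of $\mathrm{sp}(h)$. Then $q := \chi_{(0,\infty)}(h)$ is a projection in $\mathrm{C}(\Omega)$, Cuntz equivalent to $h$, and it satisfies $\mathrm{d}_\tau(q) = \tau(q) > 0$ for all $\tau$. Since $h \in \overline{AaA}$, it suffices to show that $q$ is full.

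\emph{Step 2: a uniform lower bound.} The map $\tau \mapsto \tau(q)$ is continuous on the compact set $\tr(A)$ (equivalently on $\mathcal M_1(\Omega, \Gamma)$), so $\tau(q) > \delta > 0$ for some $\delta$. Choose $m \in \mathbb N$ with $m\delta > 1$.

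\emph{Step 3: dominate the unit.} The goal is $1 \precsim 1_m \otimes q$. The cleanest route is to apply the tower argument of Proposition \ref{cos} directly. The clopen sets $Q = \mathrm{supp}(q)$ and $\Omega$ give, through the local density estimate \eqref{local-density}, a Rokhlin partition with $(K,\delta)$-invariant shapes such that for every $x$ in every base $Z_s$,
\[
\abs{\{\gamma \in \Gamma_s : x\gamma \in Q\}} > \delta\abs{\Gamma_s}.
\]
Inside $C = \bigoplus_s \mathrm{M}_{\abs{\Gamma_s}}(\mathrm{C}(Z_s))$, at each point the rank of $1$ is $\abs{\Gamma_s}$, while the rank of $1_m \otimes q$ in $\mathrm{M}_m(C)$ is greater than $m\delta\abs{\Gamma_s} \ge \abs{\Gamma_s}$. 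Here $\mathrm{M}_m(C)$ is again a finite direct sum of homogeneous algebras over totally disconnected bases, and $1_m \otimes q$ and $1$ are diagonal projections there. Lemma \ref{matrix-C-0}, applied after enlarging matrix sizes, therefore gives $1 \precsim 1_m \otimes q$ in $\mathrm{M}_m(C) \subseteq \mathrm{M}_m(A)$.

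Alternatively one could split $1$ into $m$ clopen pieces of trace less than $\tau(q)$, but such pieces need not exist uniformly, so the tower argument above is preferred.

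\emph{Conclusion.} From $1 \precsim 1_m \otimes q \precsim 1_m \otimes a$, the unit lies in the ideal generated by $a$, so $a$ is full.

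The main obstacle is Step 3: making the rank comparison strict and uniform over every point of every base, which relies on the compactness and Følner argument already carried out in Proposition \ref{cos}. The case $\abs{\Gamma} = \infty$ enters only through Lemma \ref{sub-dom}.
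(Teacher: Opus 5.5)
Your argument is correct, but it is organized differently from the paper's. The paper never compares the unit with several copies of $q$. It observes three things:
\begin{enumerate}
\item the union $p=p_1+\cdots+p_S$ of the bases of a Rokhlin partition is automatically full, because $\sum_s\sum_{\gamma\in\Gamma_s}\gamma(p_s)=1$;
\item $\tau(p)\le\max_s 1/\abs{\Gamma_s}$, which can be made smaller than $\delta<\min_\tau\tau(h)$ since $\abs{\Gamma}=\infty$;
\item the $(1,1)$-(COS) of Proposition \ref{cos}, quoted as a black box, then gives $p\precsim h\precsim a$.
\end{enumerate}
You instead re-run the tower argument behind Proposition \ref{cos} with multiplicity. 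You compare $1$ with $1_m\otimes q$ summand by summand in $\mathrm{M}_m(C)\cong\bigoplus_s\mathrm{M}_{m\abs{\Gamma_s}}(\mathrm{C}(Z_s))$ via Lemma \ref{matrix-C-0}. This is valid. The estimate you actually need is \eqref{lbd-dense} with $F=Q$, i.e.\ the analogue of \eqref{local-density} with $E'=\Omega$, $F=Q$ and the constant $\frac14$ replaced by $m$. Your route gives the slightly more quantitative conclusion $1\precsim 1_m\otimes a$ with $m$ of order $1/\min_\tau\tau(q)$. It also uses $\abs{\Gamma}=\infty$ only through the hypotheses of Lemma \ref{sub-dom}. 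The price is that Proposition \ref{cos} cannot be quoted verbatim, since it only compares two elements of $\mathrm{C}(\Omega)$. Incidentally, the alternative you set aside does work here. Group the levels of each tower $(Z_s,\Gamma_s)$ into $m$ blocks of at most $\lceil\abs{\Gamma_s}/m\rceil$ levels. This splits $1$ into clopen projections of trace at most $\frac1m+\max_s\frac{1}{\abs{\Gamma_s}}$, uniformly in $\tau$, and it is close in spirit to the paper's small full base projection.
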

\begin{proof}
First, observe that the projection of the base sets $p:=p_1 + \cdots +p_S$ in Proposition \ref{S-URP} is a full element of $\mathrm{C}(\Omega) \rtimes \Gamma$. Since $\abs{\Gamma} = \infty$, the trace of $p$ can be chosen to be arbitrarily small.

Let $I \subseteq A$ be a two-sided closed ideal which contains $a$. By Lemma \ref{sub-dom}, there is a projection $h \in \mathrm{C}(\Omega)$ such that 
$$ h \precsim a \quad \mathrm{and} \quad \tau(h) > 0, \quad \tau \in \tr(A). $$
In particular, $h \in I$. 

Since $\tr(A)$ is compact, there is $\delta >0$ such that $ \tau(h) > \delta$, $\tau \in \tr(A)$. By the observation made at the beginning of the proof, there is a full  projection $p \in \mathrm{C}(\Omega)$ such that $$ \tau(p) < \delta, \quad \tau \in \tr(A).$$
By the property (COS) (Proposition \ref{cos}), one has $$p \precsim  h,$$ which implies $p \in I$. Since $p$ is full, one has $I=A$.
\end{proof}

%\begin{rem}
%The dynamical system $(l^\infty(\Gamma), \Gamma)$ indeed satisfies the Kishimoto's condition, and hence the conditional expectation $l^\infty(\Gamma) \rtimes \Gamma \to l^\infty(\Gamma)$ preserves ideals, i.e., $\mathbb E(I) \subseteq I$ for all closed two-sided ideals of $l^\infty(\Gamma) \rtimes \Gamma$.
%\end{rem}

The following lemma certainly is well known:
\begin{lem}\label{cut-out-proj}
Let $A$ be a C*-algebra. Let $a \in A$ be a positive element, and let $p\in \overline{aAa}$ be a projection. Then, there are positive elements $a', q \in \overline{aAa}$ such that $q$ is a projection, $p \sim q$, $a' \perp q$, and $a$ is Cuntz equivalent to $a'+q$.
\end{lem}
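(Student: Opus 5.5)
The idea is to cut the projection $p$ out of $a$ with continuous functional calculus. Set $q := p$ and build $a'$ as a compression of $a$ by $1-p$. Everything happens inside the hereditary subalgebra $B := \overline{aAa}$, which contains $p$; if $A$ is non-unital we work in its unitization $\tilde A$. Since $p \in B$, we have $pap \in B$, $(1-p)a(1-p) \in \overline{aAa}$ (because $a - pa - ap + pap$ lies there), and $(1-p)a(1-p) \perp p$. The natural candidate is therefore
$$a' := (1-p)a(1-p), \qquad q := p.$$
Then $q$ is a projection, $q \sim p$ trivially, $a' \perp q$, and both lie in $\overline{aAa}$.

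It remains to show $a \sim a' + q$ in the Cuntz sense.

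First, $a' + q \precsim a$. Both $a'$ and $q$ lie in $\overline{aAa}$, so $a'+q$ is a positive element of $\overline{aAa}$. Any positive element $x$ of $\overline{aAa}$ satisfies $x \precsim a$: it is approximable by elements $a y a$, and one gets $(x-\eps)_+ \precsim a$ for every $\eps>0$, after which Lemma \ref{sub-lem-Cu} gives $x \precsim a$.

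Second, $a \precsim a'+q$. Writing $a = (p + (1-p))\,a\,(p+(1-p))$ in $\tilde A$, the standard estimate for a positive element under a $2\times2$ decomposition gives
$$a \le 2\bigl(pap + (1-p)a(1-p)\bigr).$$
One way to see this is that $(p-(1-p))\,a\,(p-(1-p)) \ge 0$, and adding it to $a$ gives $2(pap + a')$. Moreover $pap \le \norm{a}\, p$, so
$$a \le 2\norm{a}\, q + 2a'.$$
Since $a' \perp q$, we have $2\norm{a}q + 2a' \sim q + a'$ in the Cuntz sense: on orthogonal summands, scaling does not change Cuntz class, and $q + a'$ and $\norm{a}q + a'$ generate the same hereditary subalgebra. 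Finally, $x \le y$ implies $x \precsim y$. Hence $a \precsim a' + q$, and together with the first step, $a \sim a'+q$.

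The main obstacle is the bookkeeping with the unit. The elements $1-p$ and $(p-(1-p))$ live in $\tilde A$, so we must check that $a'$ still lies in $\overline{aAa}$ and that the Cuntz comparisons hold in $A$ rather than in $\tilde A$. Both are fine: the final elements $a'$ and $q$ are in $A$, and Cuntz subequivalence between elements of $A$ can be witnessed in $A$, since $x_n^* b x_n$ with $x_n \in \tilde A$ can be replaced by $x_n b^{1/2}$-type elements. If a version with $a'$ obtained by functional calculus were preferred, an alternative is to use $p$ as a unit for the hereditary subalgebra, but the direct compression above seems simplest.
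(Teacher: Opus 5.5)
Your proof is correct, and it takes a different route from the paper's.

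\textbf{The paper's route.} It passes to a separable subalgebra and chooses an approximate unit $(e_n)$ of $\overline{aAa}$ with $e_ne_{n+1}=e_n$. Using that $p$ is a compact element, it replaces $p$ by an equivalent projection $q\in\overline{e_{n-1}Ae_{n-1}}$, so that $e_nq=q$. It then builds $a'=(e_{n+1}-q)+\sum_{i\ge1}2^{-i}(e_{n+1+i}-e_{n+i})$ out of the approximate unit. This makes $a'+q$ a strictly positive element of $\overline{aAa}$, hence Cuntz equivalent to $a$.

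\textbf{Your route.} You keep $q=p$ and take the actual compression $a'=(1-p)a(1-p)$. The pinching inequality $a\le 2\bigl(pap+(1-p)a(1-p)\bigr)\le 2\norm{a}p+2a'$ then does all the work.

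\textbf{Comparison.} Your argument is more elementary: there is no separability reduction, no approximate unit, and no perturbation of $p$. It also gives the sharper conclusion $q=p$. Moreover it shows $a\le c\,(a'+q)$ with $c=\max\{2\norm{a},2\}$, so $a'+q$ generates exactly $\overline{aAa}$. That is the same feature the paper obtains through strict positivity. The lemma is used only in Proposition~\ref{main-thm}, where just $\tilde{b}\perp q$, $q\sim p$ and $\tilde{b}+q\sim b$ are needed, so the two versions are interchangeable there.

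\textbf{A minor point.} Your closing remark about replacing witnesses $x_n\in\tilde A$ by ``$x_nb^{1/2}$-type elements'' is loosely phrased. The clean fix is right multiplication by an approximate unit of $A$. In any case the remark is not needed: both facts you rely on hold entirely inside $A$, namely that $0\le x\le y$ in $A$ implies $x\precsim y$ in $A$, and that positive elements of $\overline{aAa}$ are $\precsim a$.
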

\begin{proof}
Passing to a separable sub-C*-algebra of $A$ which contains $a$ and $p$, one may assume that $A$ is separable. Choose an approximate unit $(e_n)_{n=1}^\infty$ for $\overline{aAa}$ such that $e_n e_{n+1} = e_{n+1} e_n = e_n$, $n=1, 2, ...$. Since $p$ is a projection (so it is a compact element), there is a projection $q \in \overline{e_{n-1}Ae_{n-1}}$ for a sufficiently large $n$ such that $p \sim q$. Note that $e_n q = q$. Then $$a ' : = (e_{n+1} - q) + \sum_{i=1}^\infty\frac{1}{2^i}(e_{n+1+i} - e_{n+i})$$
and $q$ has the desired property.
\end{proof}

\begin{prop}\label{main-thm}
Let $(\Omega, \Gamma)$ be a dynamical system which has the strong URP of Definition \ref{defn-S-URP} (i.e., Rokhlin towers form a partition of unity), and assume that $\Omega$ is totally disconnected. 

If $a, b  \in A:=\mathrm{C}(\Omega) \rtimes \Gamma$ are positive elements such that 
\begin{equation}\label{comp-equ}
\mathrm{d}_\tau(a) < \mathrm{d}_\tau(b),\quad \tau \in \tr(A),
\end{equation} 
then $a \precsim b$.

Moreover, if $(\Omega, \Gamma)$ is minimal, then the statement above holds for all positive elements $a, b \in A \otimes \mathcal K$. 
\end{prop}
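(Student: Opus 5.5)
The plan is to follow the scheme of Theorem 4.8 of \cite{Niu-MD-Z}: cut down by the element $h$ of Proposition \ref{urp}, compare inside the homogeneous algebra $C$ using Lemma \ref{matrix-C-0}, and absorb the small remainder $1-h$ into a small piece of $b$ by (COS), Proposition \ref{cos}. By Lemma \ref{sub-lem-Cu}, it is enough to show $(a-\eps)_+\precsim b$ for each $\eps>0$. First we dispose of the degenerate cases. If $\abs{\Gamma}<\infty$, then (taking a single tower with shape $\Gamma$) $A\cong\mathrm{M}_{\abs{\Gamma}}(\mathrm{C}(Z))$ with $Z$ totally disconnected, and Lemma \ref{matrix-C-0} applies directly. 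So assume $\abs{\Gamma}=\infty$. The hypothesis gives $\mathrm{d}_\tau(b)>0$ for all $\tau$. By compactness of $\tr(A)$ and lower semicontinuity of $\tau\mapsto\mathrm{d}_\tau(b)$, together with the continuity of $\tau\mapsto\tau(g((a-\eps)_+))$ for continuous $g$, there are $\delta>0$ and $\eta>0$ such that
\begin{equation*}
\mathrm{d}_\tau((a-\eps/2)_+)+2\delta<\tau(g_\eta(b))\le \mathrm{d}_\tau((b-\eta)_+),\quad \tau\in\tr(A),
\end{equation*}
where $g_\eta$ is a continuous function with $0\le g_\eta\le 1$ that vanishes on $[0,\eta]$.

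Next we split $b$ so as to reserve room for the error term. Apply Lemma \ref{sub-dom} to $g_{\eta/2}(b)$, or to a suitable cut-down of it, to obtain a projection $e\in\mathrm{C}(\Omega)$ with $e\precsim b$ and $0<\tau(e)<\delta$ for all $\tau$. Then, following Lemma \ref{cut-out-proj}, arrange that $b$ dominates, up to Cuntz equivalence, an orthogonal sum $b'\oplus e$ with $\mathrm{d}_\tau(b')>\mathrm{d}_\tau(b)-\delta$. Because $e$ may not lie in $\overline{bAb}$ exactly, this step needs some care; an alternative is to apply Lemma \ref{sub-dom} inside the hereditary subalgebra generated by $(b-\eta)_+$, which produces $e$ directly in $\overline{bAb}$. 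Now apply Proposition \ref{urp} to the finite set $\{a,b'\}$ with a tolerance $\eps'\ll\min\{\eps,\delta,\min_\tau\tau(e)\}$. This yields $C=\bigoplus_s\mathrm{M}_{n_s}(\mathrm{C}(Z_s))$ and $h\in \mathrm{C}(\Omega)\cap C$ with $\mathrm{d}_\tau(1-h)<\eps'$. We then have
\begin{equation*}
(a-\eps)_+\precsim (hah-\eps/4)_+\oplus (1-h)
\end{equation*}
by almost-commutation, and $hah, hb'h\in C$, with $hb'h\precsim b'$.

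The third step is to compare inside $C$. The trace inequality is transferred to pointwise ranks on $Z_s$. Every point $x\in Z_s$ gives a normalized trace on $C$, but it is not a trace on $A$; the standard remedy is that, as in the proof of Proposition \ref{cos}, sufficiently invariant Rokhlin shapes make the orbit averages of each point close to invariant measures. Carrying this out, we would obtain from a compactness and contradiction argument that for all $x\in Z_s$,
\begin{equation*}
\tfrac{1}{n_s}\mathrm{rank}\,(hah-\eps/4)_+(x)<\tfrac{1}{n_s}\mathrm{rank}\,(hb'h-\eta')_+(x).
\end{equation*}
This is the main obstacle. The issue is that rank functions on $\mathrm{M}_{n_s}(\mathrm{C}(Z_s))$ of non-diagonal elements are not controlled by traces on $A$ directly. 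What I would try first is to use the Følner-average limit argument of Proposition \ref{cos} on the functions $x\mapsto \mathrm{tr}(g(hah)(x))$, together with a continuous approximation of ranks that exploits zero dimensionality (spectral projections cut at gaps). Given the pointwise inequality, Lemma \ref{matrix-C-0} gives $(hah-\eps/4)_+\precsim hb'h\precsim b'$ in $C\subseteq A$. Meanwhile $\mathrm{d}_\tau(1-h)<\eps'<\tau(e)$, and $1-h$ together with $e$ lie in $\mathrm{C}(\Omega)$, so the $(1,1)$-(COS) of Proposition \ref{cos} gives $1-h\precsim e$. Altogether $(a-\eps)_+\precsim b'\oplus e\precsim b$.

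For the minimal case in $A\otimes\mathcal K$, Remark \ref{tp-free} shows that $A$ is simple. We reduce to $\mathrm{M}_k(A)=\mathrm{C}(\Omega)\otimes \mathrm{M}_k\rtimes\Gamma$, or more precisely to the crossed product of $\Omega\times\{1,\dots,k\}$, which is again a totally disconnected system with strong URP by amplifying the towers. We then rerun the argument above, noting that $\tr(\mathrm{M}_k(A))$ is just $\tr(A)$ rescaled. Finally, elements of $A\otimes\mathcal K$ are handled by cutting down with $(a-\eps)_+\in \mathrm{M}_k(A)$ up to small perturbation, while simplicity guarantees $\mathrm{d}_\tau(b)>0$ and allows $b$ to be approximated by $(b-\eta)_+$ in some $\mathrm{M}_k(A)$ with the strict inequality preserved.
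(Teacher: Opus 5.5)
Your proposal has a genuine gap, and it sits exactly where you say it does. The strict fibrewise inequality between $\mathrm{rank}\,(hah-\eps/4)_+(x)$ and $\mathrm{rank}\,(hb'h-\eta')_+(x)$ for all $x\in Z_s$ is only announced. The rest of your plan is the same bookkeeping as in the paper: cutting a small projection $e$ out of $b$ via Lemma \ref{cut-out-proj}, the cut-down by $h$, Lemma \ref{matrix-C-0} inside $C$, and $1-h\precsim e$ by Proposition \ref{cos}. So this inequality is the whole content of the proposition.

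It is not a formal consequence of the hypothesis, for three reasons. Evaluation at $x\in Z_s$ is a trace on $C$ but not on $A$. The elements $hah$ are not diagonal, so the counting argument of Proposition \ref{cos} does not apply as it stands. And $x$ need not even lie in the support of an invariant measure (e.g.\ the isolated points $\Gamma\subseteq\beta\Gamma$).

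The paper never reads fibre ranks off traces. It uses Lemma \ref{full-element} to know that $\tilde b$ is full, so the argument of Proposition 3.2 of \cite{RorUHF2} gives an honest subequivalence $a\otimes 1_{N+1}\precsim\tilde b\otimes 1_N$ in $A$. The approximate witnesses $(x_{i,j})$ are chosen almost commuting with $h$, and Lemma 4.6 of \cite{Niu-MD-Z} converts the relation into $(hah-102\eps)_+\otimes 1_{N+1}\precsim c\otimes 1_N$ inside $C$, with $c$ Cuntz below a cut-down of $\tilde b$. A subequivalence in $C$ forces $(N+1)\,\mathrm{rank}\le N\,\mathrm{rank}$ in every fibre, which is exactly the pointwise input Lemma \ref{matrix-C-0} needs.

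Your own suggestion can still be completed, and would give a more elementary route that bypasses fullness, the $N$-fold comparison and Lemma 4.6 of \cite{Niu-MD-Z}. But the missing estimate has to be proved. Take self-adjoint $a'=\sum_{\gamma\in\mathcal N}f_\gamma u_\gamma$ and $x\in Z_s$. The fibre $ha'h(x)$ is the compression $DP\pi_x(a')PD$, where:
\begin{itemize}
\item $\pi_x$ is the representation of $A$ on $\ell^2(\Gamma)$ induced from evaluation at $x$;
\item $P$ is the projection onto $\ell^2(\Gamma_s)$;
\item $D=\mathrm{diag}(h(x\gamma))$, which equals $1$ off $B_s:=\Gamma_s\setminus\mathrm{int}_{\mathcal N^{L+1}}(\Gamma_s)$.
\end{itemize}
Telescoping $(DPYPD)^j-(PYP)^j$ and $(PYP)^j-PY^jP$ gives terms of rank at most $2\abs{B_s}$. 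Hence for every polynomial $p$
\begin{equation*}
\Bigl|\mathrm{Tr}\bigl(p(ha'h)(x)\bigr)-\sum_{\gamma\in\Gamma_s}\mathbb E\bigl(p(a')\bigr)(x\gamma)\Bigr|\le C_p\abs{B_s},\quad x\in Z_s,
\end{equation*}
and the same holds for continuous $g$ up to an extra error $2\abs{\Gamma_s}\norm{g-p}_\infty$. Combine this with three further ingredients:
\begin{itemize}
\item the bounds $\mathrm{rank}\,(X-\eps/4)_+\le\mathrm{Tr}(g(X))$ and $\mathrm{Tr}(g_k(Y))\le\mathrm{rank}\,Y$ for suitable continuous $0\le g,g_k\le 1$;
\item a Dini argument giving a uniform gap $\tau(g(a))+\delta<\tau(g_k(b'))$ on $\tr(A)$;
\item the F{\o}lner compactness argument of Proposition \ref{cos}, run for $g(a)$ and $g_k(b')$ simultaneously.
\end{itemize}
This yields the strict fibrewise inequality with a margin of order $\delta\abs{\Gamma_s}$.

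Separately, your reduction for $\mathrm{M}_k(A)$ is wrong as written: $\Gamma$ acting trivially on $\Omega\times\{1,\dots,k\}$ gives $A^{\oplus k}$, not $\mathrm{M}_k(A)$. Use $\Gamma\times\Int/k\Int$ acting on $\Omega\times\Int/k\Int$, as in the proof of Theorem \ref{cp-gp}. This preserves total disconnectedness, minimality and the strong URP. It also makes the (COS) step for $(1-h)\otimes 1_k$ legitimate, which Proposition \ref{cos} does not cover directly.

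Your separate treatment of finite $\Gamma$ is fine. It covers a case that the paper's proof, through Lemmas \ref{sub-dom} and \ref{full-element}, tacitly excludes.
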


\begin{proof}
The proof is similar to the proof of Theorem 4.8 of \cite{Niu-MD-Z}.

By \cite{RC-5-person}, we may assume that $a, b \in \mathrm{M}_\infty(A)$. Let $a, b \in \mathrm{M}_n(A)$, where $n$ is an arbitrary natural number if $(\Omega, \Gamma)$ is minimal, and $n=1$ for the general case.

By Lemma \ref{sub-lem-Cu} and the compactness of $\tr(A)$, upon replacing $a$ by $(a-\eps)_+$ for an arbitrary $\eps>0$, we may assume that there is $\delta>0$ such that 
$$ \mathrm{d}_\tau(a) + \delta < \mathrm{d}_\tau(b),\quad \tau \in \tr(A). $$

Note that $\mathrm{d}_\tau(b) >0$ for all $\tau \in \tr(A)$, and hence $\tau(b) > 0$ for all $\tau \in \tr(A)$. 

In the case that $(\Omega, \Gamma)$ is minimal (so $n$ is arbitrary), since $b \in \mathrm{M}_n(A)$ is not zero, pick a non-zero diagonal element $b_0$. Since $A$ is simple in this case, one has $\tau(b_0) > 0$ for all $\tau \in \tr(A)$. Note that $b_0\precsim b$ since $b_0 = ebe$ for a rank one projection $e$. In the general case (when $n=1$), just set $b_0 = b$.

By Lemma \ref{sub-dom}, there is a projection $p' \in \mathrm{C}(\Omega)$ such that $p' \precsim b_0 \precsim b$ and 
\begin{equation}\label{eq-01001}
 0 < \tau(p') < \frac{\delta}{2}, \quad \tau \in \tr(A).
 \end{equation}
Then there is a projection $p \in \overline{b\mathrm{M}_n(A)b}$ such that $p$ is Murray-von Neumann equivalent to $p'$. %and hence
%$$ \delta_1 < \tau(p) < \frac{\delta}{2}, \quad \tau \in \tr(A).$$
%for some $\delta_1>0$. 

Pick mutually orthogonal and mutually equivalent non-zero projections $p'_1, ..., p'_n \leq p'$ (in the case that $n=1$, just choose $p_1' = p'$). Note that
\begin{equation}\label{eq-0100101}
\tau(p'_1) > \delta_1, \quad \tau \in \tr(A),
\end{equation}
for some $\delta_1>0$.

By Lemma \ref{cut-out-proj}, there are positive elements $\tilde{b}, q \in \overline{b\mathrm{M}_n(A)b}$ such that $q$ is a projection, $q$ is Murray-von Neumann equivalent to $p$, $\tilde{b} \perp q$, and $\tilde{b} + q \sim b$. Therefore, by \eqref{eq-01001}, 
\begin{equation}\label{trace-comp}
\mathrm{d}_\tau(a) + \frac{\delta}{2} < \mathrm{d}_\tau(\tilde{b}), \quad \tau \in \tr(A).
\end{equation} 

In particular, $\tau(\tilde{b}) >0$, $\tau \in \tr(A)$. By Lemma \ref{full-element}, the positive element $\tilde{b}$ is full in $A$ (note that $n=1$ for the possibly non-simple case). Thus, the Cuntz class of $\tilde{b}$ is a strong order unit of the Cuntz semigroup of $\mathrm{C}(\Omega) \rtimes \Gamma$ (i.e., for each positive element $s \in \mathrm{M}_\infty(\mathrm{C}(\Omega) \rtimes \Gamma)$, there is $m \in \mathbb N$ such that $s \precsim \tilde{b}\otimes 1_m$). By \eqref{trace-comp} and the proof of Proposition 3.2 of \cite{RorUHF2}, there is $N \in \mathbb N$ such that
\begin{equation}\label{mult-comp}
  a \otimes 1_{N+1} \precsim  \tilde{b} \otimes 1_N. 
\end{equation}

Let $\eps>0$ be arbitrary. Then, there is $(x_{i, j})_{i, j=1,..., N+1}$ such that
\begin{equation}\label{close-relation}
\norm{a \otimes 1_{N+1} - (x_{i, j})^*(\tilde{b} \otimes 1_N)(x_{i, j})} < \eps.
\end{equation}

Denote by $\delta' := \delta(\eps, \norm{(x_{i, j})}, N+1)$ the constant of Lemma 4.6 of \cite{Niu-MD-Z} with respect to $\norm{(x_{i, j})}$ and $\eps$.

Since $(\mathrm{C}(\Omega), \Gamma)$ has the URP, by Proposition \ref{urp}, for any $\eps'>0$ (to be determined), there exist a unital sub-C*-algebra $C$, $h \in C$, $a'$, $b'$, and $x_{i, j}'$, $i, j =1, ..., N+1$, in $\mathrm{M}_n(A)$ such that
$$C \cong \bigoplus_{s=1}^S \mathrm{M}_{n_s}(\mathrm{C}(Z_s)),$$ where $Z_s \subseteq \Omega$, $s=1, ..., S$, are clopen subsets, 
$$ \norm{x'_{i, j} - x_{i, j}},\  \norm{a' - a},\  \norm{b' - \tilde{b}} < \eps' < \frac{\eps^3}{16 \max\{1, \norm{(x_{i, j})}^6\}}, \quad i, j = 1, ..., N+1,$$
\begin{equation}\label{almost-comm-0} 
\norm{[x'_{i, j}, h\otimes 1_n]},\  \norm{[a', h\otimes 1_n]},\  \norm{[b', h\otimes 1_n]} < \eps' < \delta', \quad i, j = 1, ..., N+1,
\end{equation}
\begin{equation}\label{in-C-01} 
hx_{i, j}'h, ha'h, hb'h \in \mathrm{M}_n(C),\quad i, j=1, ..., N+1, 
\end{equation}
and
$$\mathrm{d}_\tau(1 - h) < \delta_1,\quad \tau \in \tr(A).$$

Then $\eps'$ can be chosen to be sufficiently small that
\begin{equation}\label{appro-equ-1}
\norm{a' \otimes 1_{N+1}  - (x'_{i, j})^* (b' \otimes 1_N) (x'_{i, j})} < \eps,
\end{equation}
and
\begin{equation}\label{appro-equ-2}
\norm{b' - \tilde{b}} < \frac{\eps^3}{8 \max\{1, \norm{(x'_{i, j})}^6\}}.
\end{equation}

By \eqref{eq-0100101}, $$\mathrm d_\tau(1-h) < \delta_1 < \mathrm{d}_\tau(p'_1),\quad \tau \in \tr(A),$$
and by the property (COS) (Proposition \ref{cos}), 
\begin{equation}\label{eq-01002}
(1 - h)\otimes 1_n \precsim p' \sim p \sim q.
\end{equation}

By \eqref{appro-equ-1}, \eqref{almost-comm-0} and \eqref{in-C-01}, it follows from Lemma 4.6 of \cite{Niu-MD-Z} that there is $c \in \mathrm{M}_n(C)$ such that
$$ ((h\otimes 1_n)a'(h\otimes 1_n) - 102\eps)_+\otimes 1_{N+1} \precsim_C c \otimes 1_N$$
and
\begin{equation}\label{eqn-26}
c \precsim (h \otimes 1_n )(b' - \frac{\eps^3}{8 \max\{1, \norm{x'}^6\}})_+(h\otimes 1_n).
\end{equation}
In particular
$$ \mathrm{rank}(((h\otimes 1_n)a'(h\otimes 1_n) - 102\eps)_+(x)) \leq \mathrm{rank}(c(x)), \quad x\in \widehat{\mathrm{M}_n(C)} = \bigsqcup_{s = 1}^S Z_s.$$
Since $\Omega$ is zero dimensional, the clopen sets $Z_1, ..., Z_S$ are also zero dimensional. By Lemma \ref{matrix-C-0}, 
%$\widehat{C}$ is zero dimensional (so it has comparison), 
one has 
\begin{equation}\label{eqn-27}
((h\otimes 1_n)a'(h\otimes 1_n) - 102\eps)_+ \precsim_C c.
\end{equation}

Then
\begin{eqnarray*}
a 
& \approx_{\eps} &
a' \\
& \approx_\eps &
((1-h)^{\frac{1}{2}} \otimes 1_n)a'((1-h)^{\frac{1}{2}} \otimes 1_n) + (h^{\frac{1}{2}} \otimes 1_n)a'(h^{\frac{1}{2}} \otimes 1_n) \\
& \approx_{102\eps} &
((1-h)^{\frac{1}{2}} \otimes 1_n)a'((1-h)^{\frac{1}{2}} \otimes 1_n) + ((h^{\frac{1}{2}} \otimes 1_n)a'(h^{\frac{1}{2}} \otimes 1_n) - 102\eps)_+,
\end{eqnarray*}
and by \eqref{eqn-27}, \eqref{eq-01002}, \eqref{eqn-26},  and \eqref{appro-equ-2},
\begin{eqnarray*}
&& ((1-h)^{\frac{1}{2}} \otimes 1_n)a'((1-h)^{\frac{1}{2}} \otimes 1_n) + ((h^{\frac{1}{2}} \otimes 1_n)a'(h^{\frac{1}{2}} \otimes 1_n) - 102\eps)_+\\
  & \precsim & (1-h) \otimes 1_n + c \\
& \precsim &q \oplus (b' - \frac{\eps^3}{8 \max\{1, \norm{x'}^6\}})_+ \\
& \precsim & q + \tilde{b} \precsim b,
\end{eqnarray*}
which implies $(a - 104\eps)_+ \precsim b$. Since $\eps$ is arbitrary, one has $a \precsim b$, as desired.
\end{proof}

Since $l^\infty(\Gamma)$ has real rank zero, its spectrum $\widehat{l^\infty(\Gamma)} = \beta\Gamma$ is totally disconnected. Therefore, we have the following corollary:
\begin{thm}\label{cp-gp}
Let $\Gamma$ be a countable discrete amenable group. Let $A = l^\infty(\Gamma) \rtimes \Gamma$ or $A = \mathrm{C}(M)\rtimes\Gamma$, where $M$ is the universal minimal set of $\Gamma$. 

If $a, b \in A \otimes \mathcal K$ are positive elements such that $$\mathrm{d}_\tau(a) < \mathrm{d}_\tau(b),\quad \tau \in \tr(A),$$ then $a \precsim b$.
\end{thm}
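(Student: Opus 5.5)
The statement follows from Proposition \ref{main-thm}, applied to $\Omega=\beta\Gamma=\widehat{l^\infty(\Gamma)}$ and to $\Omega=M$. Both systems have the strong URP: for $\beta\Gamma$ by Proposition \ref{S-URP}, and for $M$ by Corollary \ref{S-URP-M}. Both spaces are totally disconnected, since $l^\infty(\Gamma)$ has real rank zero and $M\subseteq\beta\Gamma$ is closed. The case $\abs{\Gamma}<\infty$ I will dispose of separately: then $l^\infty(\Gamma)\rtimes\Gamma\cong \mathrm{M}_{\abs{\Gamma}}(\Comp)$, $M$ is a single free orbit, and $\mathrm{C}(M)\rtimes\Gamma$ is the same matrix algebra, where strict comparison is clear. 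So assume $\abs{\Gamma}=\infty$.

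For $A=\mathrm{C}(M)\rtimes\Gamma$ the system is minimal, so the second part of Proposition \ref{main-thm} already gives the conclusion for all positive $a,b\in A\otimes\mathcal K$, and nothing more is needed. The real work is for $A=l^\infty(\Gamma)\rtimes\Gamma$ and $a,b\in \mathrm{M}_n(A)$ with $n>1$. Proposition \ref{main-thm} covers only $n=1$ there, and the reduction to $\mathrm{M}_\infty(A)$ via \cite{RC-5-person} is as before. My plan is to rerun the proof of Proposition \ref{main-thm} for $n$ arbitrary, checking the only two places where minimality (simplicity) was used.

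\emph{First gap: fullness of $\tilde b$ in $\mathrm{M}_n(A)$.} Given $\tau(\tilde b)>0$ for all $\tau$ (trace taken with $\mathrm{Tr}\otimes\tau$), the element $\sum_k \tilde b_{kk}\in A$ has strictly positive trace. Lemma \ref{full-element} then makes it full in $A$, so the ideal of $\mathrm{M}_n(A)$ generated by $\tilde b$ contains $\mathrm{M}_n(I)$ with $I=A$.

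\emph{Second gap: producing $p'$.} We need a projection $p'\in \mathrm{C}(\Omega)$ containing $n$ orthogonal equivalent subprojections $p'_1,\dots,p'_n$ with $p'\precsim b$ and $0<\tau(p')<\delta/2$. I would redo Lemma \ref{sub-dom} inside $\mathrm{M}_n(A)$, using the conditional expectation onto the diagonal $\mathrm{M}_n(\mathrm{C}(\Omega))$. The averaging argument over Rokhlin towers then yields a projection $P$ in the diagonal of $\mathrm{M}_n(\mathrm{C}(\Omega))$ with $P\precsim b$ and trace bounded below by some $\delta_0>0$. Taking the towers with $\abs{\Gamma_s}$ large, and then choosing inside each tower suitably many distinct levels (all Murray--von Neumann equivalent in $A$), I obtain from $P$ a projection $p'\in\mathrm{C}(\Omega)$. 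This $p'$ should be a sum of $n$ equivalent pieces, have tracial size between $\delta_1$ and $\delta/2$, and satisfy $p'\precsim P$.

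This last comparison is the step I expect to be the main obstacle, because the rank of $P$ may sit in different matrix coordinates over different points. I would establish it by comparing ranks fibrewise in the Rokhlin algebra $C\otimes\mathrm{M}_n$ of Proposition \ref{urp}, using Lemma \ref{matrix-C-0}; the bases $Z_s$ are zero-dimensional, so strict rank inequality suffices.

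With these two replacements the remainder of the proof is verbatim. Proposition \ref{urp} supplies $h$ with $\mathrm{d}_\tau(1-h)<\delta_1$. (COS) from Proposition \ref{cos}, applied to $(1-h)$ against $p'_1$ and tensored with $1_n$, gives $(1-h)\otimes 1_n\precsim p'\sim q$. Lemma 4.6 of \cite{Niu-MD-Z} and Lemma \ref{matrix-C-0} then handle the $h$-part inside $\mathrm{M}_n(C)$, and together these give $(a-104\eps)_+\precsim b$ for all $\eps>0$, hence $a\precsim b$.
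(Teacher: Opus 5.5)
Your proposal is correct, but for $l^\infty(\Gamma)\rtimes\Gamma$ with $n>1$ it takes a different route from the paper.

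The paper does not reopen the proof of Proposition \ref{main-thm}. It uses the isomorphism $\mathrm{M}_n(l^\infty(\Gamma)\rtimes\Gamma)\cong l^\infty(\Gamma\times\Int/n\Int)\rtimes(\Gamma\times\Int/n\Int)$. This turns the matrix case into the scalar ($n=1$) case of Proposition \ref{main-thm} for the countable amenable group $\Gamma\times\Int/n\Int$, whose Stone--\v{C}ech system again has the strong URP by Proposition \ref{S-URP}. No new lemma is needed.

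You instead repair the two places where simplicity was used, and both repairs are sound:
\begin{itemize}
\item Fullness of $\tilde b$ in $\mathrm{M}_n(A)$ holds because the ideal of $A$ generated by the entries of $\tilde b$ contains $\sum_k\tilde b_{kk}$, which is full by Lemma \ref{full-element}.
\item The matrix version of Lemma \ref{sub-dom} goes through.
\end{itemize}
What your route buys is the matrix case of Proposition \ref{main-thm} for every system, minimal or not, with the strong URP on a totally disconnected space. The paper's product trick applied to $(\Omega\times\Int/n\Int,\Gamma\times\Int/n\Int)$ would give this too. You are also more careful than the paper about finite $\Gamma$, which Lemmas \ref{sub-dom} and \ref{full-element} exclude.

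The step you flag as the main obstacle can be avoided. When redoing Lemma \ref{sub-dom}, choose for each clopen piece $Z_{s,m}$ of a base a single level $\gamma_{s,m}\in\mathrm{int}_{\mathcal G}(\Gamma_s)$ and a single coordinate $k_{s,m}$ on which the entry $\mathbb E(b'_{k_{s,m}k_{s,m}})$ is bounded below on $Z_{s,m}\gamma_{s,m}$. This is possible because the sum over $k$ of these entries has large average along each tower. Then $P=\mathrm{diag}(P_1,\dots,P_n)$ has mutually orthogonal entries $P_k\in\mathrm{C}(\Omega)$. Hence $Pb'P=\mathrm{diag}(P_k\mathbb E(b'_{kk}))$, $P\precsim b$, and $P\sim (P_1+\cdots+P_n)\otimes e_{11}$. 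So $p'\precsim P$ follows directly from the $(1,1)$-(COS) of Proposition \ref{cos}, with no fibrewise comparison in $C\otimes\mathrm{M}_n$.
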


\begin{proof}
By \cite{RC-5-person}, one may assume that $a , b \in \mathrm{M}_\infty(A)$.

It follows from Proposition \ref{S-URP} and Corollary \ref{S-URP-M} that both dynamical systems have the strong URP of Proposition \ref{S-URP}. The statement of theorem for the universal minimal set $(M, \Gamma)$ follows directly from Proposition \ref{main-thm}.

Let us consider $l^\infty(\Gamma)\rtimes \Gamma$. 
Note that 
\begin{eqnarray*}
l^\infty( \Gamma \times \Int/n\Int) \rtimes (\Gamma \times \Int/n\Int) 
& \cong & (l^\infty(\Gamma) \otimes l^\infty(\Int/n\Int)) \rtimes (\Gamma \times \Int/n\Int)  \\ 
& \cong & (l^\infty(\Gamma) \rtimes \Gamma) \otimes (l^\infty(\Int/n\Int) \rtimes \Int/n\Int) \\
& \cong & \mathrm{M}_n(l^\infty(\Gamma) \rtimes \Gamma). 
\end{eqnarray*}
Then, the strict comparison of positive elements $a, b \in \mathrm{M}_n(l^\infty(\Gamma) \rtimes \Gamma)$ follows from Proposition \ref{main-thm} applied to the dynamical system $(l^\infty(\Gamma \times \Int/n\Int), \Gamma \times \Int/n\Int)$. 
\end{proof}

\begin{rem}
In general,  there exist positive elements $p \in A = l^\infty(\Gamma) \rtimes \Gamma$ and tracial states $\tau_1, \tau_2 \in \tr(A) $ such that $\tau_1(p) = 0$ but $\tau_2(p) \neq 0$. For example, one can take $\Gamma = \Int$ and $p = \chi_{(-\infty, 0]}$. Then it is straightforward to construct two F{\o}lner sequences such that the density of the set $(-\infty, 0]$ along one F{\o}lner sequence is $0$ (so there is a trace $\tau_0$ such that $\tau_0(p) = 0$), while its density along the other F{\o}lner sequence is $1$ (so there is another trace $\tau_1$ such that $\tau_1(p) = 1$). In particular, this implies that $A/J$ in general is not simple, where $J = \{a \in A: \tau(a^*a) = 0,\ \tau \in \tr(A)\}$.

In fact, there is no faithful trace on $A/\mathcal K$ when $\abs{\Gamma} = \infty$, as $l^\infty(\Gamma) / c_0(\Gamma) \subseteq A/\mathcal K$ contains an uncountable family of mutually orthogonal projections. So, none of the invariant means (actually none of the states of $A/\mathcal K$) is faithful on $\beta\Gamma \setminus\Gamma$. Hence, for each invariant mean $\tau$, the invariant ideal 
$$l^\infty(\Gamma)\cap \{a\in A: \tau(a^*a) = 0\}$$ 
is proper, and therefore induces an invariant closed subset of $\beta\Gamma \setminus\Gamma$. (On the other hand, for any non-empty closed invariant subset of  $\Omega' \subseteq \beta\Gamma\setminus\Gamma$, there is always an invariant mean with support inside $\Omega'$.)

Does $A/J$ contain an uncountable family of mutually orthogonal contractions (or even projections)?

%
%Let us call a subset $S \subseteq \Gamma$ sparse if $\tau(\chi_S) = 0$ for all $\tau \in \tr(A)$. Still denoted by $J$ the collection of sparse sets. It is an ideal of the Boolean algebra $\mathcal P(\Gamma) = \{0, 1\}^{\Gamma}$. What the quotient algebra
%$$\mathcal P(\Gamma) / J $$
%looks like?
%
%Note that $$ 0 \oplus \Int $$ is sparse in $\Int^2$. More generally, if a set $S \subseteq \Gamma$ has a property that there are $\gamma_1, \gamma_2, ... \in \Gamma$ such that for every $i \neq j$,
%$$\abs{S\gamma_{i} \cap S\gamma_j} <+\infty, $$ then $S$ is sparse. So $$\{n^2: n\in \Int\} \subseteq \Int$$
%is sparse; indeed, one can take $\gamma_i = i, $ $i=1, 2, ...$; then, if $i \neq j$,  there are at most finitely many pairs $(m, n)$ such that
%$$m^2 + i = n^2 + j,$$
%$$m^2 - n^2 = (m+n)(m-n) = j - i.$$
%
%Is there a concrete family of extreme traces (means)?
%
%Is the C*-algebra $J/\mathcal K$ (separably) stable?

\end{rem}

\subsection{The C*-algebra of the universal minimal set} 
Let us consider the simple C*-algebra $\mathrm{C}(M) \rtimes \Gamma$ (or, more generally, the simple C*-algebra $\mathrm{C}(\Omega) \rtimes \Gamma$, where $(\Omega, \Gamma)$ is a minimal dynamical system which satisfies the strong URP and $\Omega$ is a zero-dimensional compact space, but not necessarily metrizable). Note that the C*-algebra $\mathrm{C}(M) \rtimes \Gamma$ is the unique (non-zero) simple quotient of the Roe algebra $l^\infty(\Gamma) \rtimes \Gamma$: 

Since $(M, \Gamma)$ is minimal and free (\cite{Ellis_1960}), and since $\Gamma$ is amenable, the (full) crossed product C*-algebra $\mathrm{C}(M)\rtimes\Gamma$ is simple (Theorem 3.2 and Remark 3.7 of \cite{Elliott_1980}), and therefore it is isomorphic to the canonical quotient of $l^\infty(\Gamma) \rtimes \Gamma$ by restricting to $M$.

On the other hand, let $B$ be a (non-zero) simple quotient of $l^\infty(\Gamma) \rtimes \Gamma$, and let us show that $B \cong \mathrm{C}(M)\rtimes\Gamma$. Note that $B$ is a crossed product of $l^\infty(\Gamma) \cong \mathrm{C}(\beta\Gamma)$ by $\Gamma$. Write the quotient of $\mathrm{C}(\beta\Gamma)$ in $B$ as $\mathrm{C}(Y)$, where $Y \subseteq \beta\Gamma$ is a closed invariant subset. Then $B$ is a crossed product of $\mathrm{C}(Y)$ by $\Gamma$. Since $B$ is simple, $Y$ must be minimal, as if $Y' \subseteq Y$ is a non-trivial closed invariant subset, then $\mathrm{C}_0(Y \setminus Y') \subseteq B$ would generate a  non-trivial ideal of $B$. Since the action of $\Gamma$ on $\beta\Gamma$ is free (\cite{Ellis_1960}), its restriction to $Y$ is also free. Therefore, the (full) crossed product C*-algebra $\mathrm{C}(Y)\rtimes\Gamma$ is simple (\cite{Elliott_1980}), and it must be isomorphic to $B$. By \cite{Ellis_1960}, all (non-empty) minimal subsets of $(\beta\Gamma, \Gamma)$ are isomorphic (and is isomorphic to the universal minimal set $(M , \Gamma)$), and therefore $B \cong \mathrm{C}(M)\rtimes\Gamma$.

%As pointed out in Corollary 1.3 and Remark 4.3 of \cite{Suzuki-sr1}, this C*-algebra has stable rank one and real rank zero. 

In this subsection, we shall show that the C*-algebra $\mathrm{C}(M) \rtimes\Gamma$ is an increasing union of $\mathcal Z$-absorbing AH algebras with real rank zero, and is approximately divisible. 

%By Corollary \ref{stable-rank-one}, it has stable rank one. In this subsection, let us show that it also has real rank zero. 
First, let us show that the canonical image of $\Kzero(\mathrm{C}(\Omega) \rtimes \Gamma)$ is dense in $\mathrm{Aff}(\mathrm{T}(\mathrm{C}(\Omega) \rtimes \Gamma))$ (with respect to the topology of uniform convergence) if $(\Omega, \Gamma)$ has the strong URP and $\Omega$ is zero dimensional (this actually also holds for the non-simple C*-algebra $l^\infty(\Gamma) \rtimes \Gamma$).

\begin{lem}\label{dense-range}
Let $(\Omega, \Gamma)$ be a topological dynamical system which has the strong URP of Definition \ref{defn-S-URP} (i.e., Rokhlin towers form a partition of unity), and denote by $A$ the crossed product $\mathrm{C}(\Omega) \rtimes \Gamma$. Assume that $\Omega$ is totally disconnected.
%Let $\Gamma$ be a discrete amenable group. 
%Then the C*-algebra  $\mathrm{C}(\Omega) \rtimes \Gamma$ has the following property:
%Let $\Gamma$ be a discrete amenable group, and denote by $A = \mathrm{C}(M) \rtimes \Gamma$, where $(M, \Gamma)$ is the minimal set of $\Gamma$. 
Then the canonical image of $\Kzero(A)$ is dense in $(\mathrm{Aff}(\tr(A)), \norm{\cdot}_\infty)$. Indeed, for any $\rho \in \mathrm{Aff}^+(\tr(A))$ and $\eps>0$, there is a diagonal projection in $\mathrm{M}_\infty(\mathrm{C}(\Omega))$ such that $\abs{\rho(\tau) - \tau(p)} < \eps$, $\tau \in \tr(A)$. %Moreover, the projection $p$ can be chosen to be a diagonal projection of $\mathrm{M}_\infty(\mathrm{C}(\Omega))$.

%In particular, for every positive elements $a, b \in A$ such that $\mathrm{d}_\tau(a) < \mathrm{d}_\tau(b)$ for all $\tau \in \tr(A)$, there is a projection $p \in A$ such that $ \mathrm{d}_\tau(a) < \tau(p) < \mathrm{d}_\tau(b) $ for all $\tau \in \tr(A)$.
\end{lem}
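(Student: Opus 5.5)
The plan is to approximate a given $\rho \in \mathrm{Aff}^+(\tr(A))$ first by the trace function of a positive element of $\mathrm{C}(\Omega)$, and then to replace that function by a diagonal projection using a Rokhlin partition from the strong URP. Density of the image of $\Kzero(A)$ then follows: a general element of $\mathrm{Aff}(\tr(A))$ is a difference of two strictly positive affine functions, since one may add a large constant, and the constant $1$ is the trace of the unit.

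\emph{Step 1 (realize $\rho$ by an element of $\mathrm{C}(\Omega)$).} Restriction to $\mathrm{C}(\Omega)$ identifies $\tr(A)$ with $\mathcal M_1(\Omega,\Gamma)$. Indeed, $\tau\mapsto \tau|_{\mathrm{C}(\Omega)}$ is injective: by the proof of Lemma \ref{sub-dom} we have $\tau(a)=\tau(\mathbb E(a))$, since traces vanish on $fu_\gamma$ for $\gamma\neq e$ when the action is free. Alternatively, one can use Proposition \ref{urp}, where $\mathrm{C}(\Omega)$ is the diagonal of $C$, and traces on $C$ are determined by the diagonal. The functions $\mu\mapsto\mu(f)$, $f\in \mathrm{C}(\Omega)_{\mathrm{sa}}$, form a subspace of $\mathrm{Aff}(\tr(A))$ that contains the constants and separates points. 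By the Kadison function representation / Stone--Weierstrass type lemma for compact convex sets (the image of $A_{\mathrm{sa}}$ is dense in $\mathrm{Aff}(\tr(A))$, and $\tau(a)=\tau(\mathbb E(a))$), such functions are dense. So there is $f\in\mathrm{C}(\Omega)_{\mathrm{sa}}$ with $|\rho(\tau)-\tau(f)|<\eps/3$. Replacing $f$ by $f+c$ and $\rho$ by $\rho+c$ if needed, we may take $f\ge 0$. Since $\Omega$ is totally disconnected, $f$ can be uniformly approximated by a finite combination $\sum_k r_k \chi_{E_k}$ with $r_k=m_k/m$ rational and $E_k$ clopen. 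Hence $\tau(f)\approx_{\eps/3} \frac1m\tau(P)$, where $P=\bigoplus_k \chi_{E_k}\otimes 1_{m_k}$ is a diagonal projection in $\mathrm{M}_\infty(\mathrm{C}(\Omega))$.

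\emph{Step 2 (remove the denominator $m$; the main obstacle).} We must find a diagonal projection $p$ with $\tau(p)\approx \frac1m\tau(P)$ for all traces uniformly. I would take a strong URP partition $(Z_s,\Gamma_s)$ with the $\Gamma_s$ so invariant that each $|\Gamma_s|$ is large. I also need, for every $x\in Z_s$, that the orbit average $\frac1{|\Gamma_s|}\sum_{\gamma\in\Gamma_s}P(x\gamma)$ is within $\eps/3$ of the values $\tau(P)$ for nearby invariant measures; this is not quite needed pointwise, only on average. Concretely, on each tower and each clopen piece of $Z_s$ where the pattern $(\mathrm{rank}\,P(x\gamma))_{\gamma\in\Gamma_s}$ is constant, let $R=\sum_\gamma \mathrm{rank}\,P(x\gamma)$. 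Then choose $\lfloor R/m\rfloor$ levels of the tower, repeating levels up to multiplicity $\lceil\max\mathrm{rank}/m\rceil$ using matrix amplification, and let $p$ be the sum of the corresponding diagonal projections. The pieces are finitely many by compactness and total disconnectedness, as in the proof of Lemma \ref{sub-dom}. Since every invariant measure gives all levels of a tower equal mass, $\tau(p)=\sum_{s,\text{pieces}}\lfloor R/m\rfloor\,\mu(\text{piece})$ while $\frac1m\tau(P)=\sum R/m\cdot\mu(\text{piece})$. The error is at most $\sum_s\mu(Z_s)\le \max_s 1/|\Gamma_s|$, which is less than $\eps/3$ once $|\Gamma_s|>3/\eps$. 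Such towers exist by the strong URP, provided $\Gamma$ is infinite; if $\Gamma$ is finite, take $\Gamma_s=\Gamma$ and argue directly in $\mathrm{M}_{|\Gamma|}(\mathrm{C}(Z))$.

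The point to check carefully is the exact identity $\tau(\chi_{Z\gamma})=\tau(\chi_Z)$ together with the count above: no Følner averaging is needed, since equal masses of the levels make the computation exact up to the rounding. The delicate part is rather Step 1, the density of $\{\tau\mapsto\tau(f)\}$ in $\mathrm{Aff}(\tr(A))$. For that I would rely on the standard fact that for a unital C*-algebra the map $A_{\mathrm{sa}}\to\mathrm{Aff}(\tr(A))$ has dense range, combined with $\tau=\tau\circ\mathbb E$.
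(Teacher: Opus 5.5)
\textbf{Gap: the shift by $c$ loses the second assertion.} Your Step 2 is correct. It is essentially the paper's final rounding step, done directly on the towers: for a diagonal projection $P$ with nonnegative multiplicities, placing $\lfloor R/m\rfloor$ copies of $\chi_{Z'}$ on the base of each tower piece $Z'$ gives total error at most $\sum_s\mu(Z_s)\le\max_s 1/\abs{\Gamma_s}$.

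Step 1 is also fine in substance. Freeness is not assumed in the lemma, but $\tau=\tau\circ\mathbb E$ follows from the strong URP itself. Cut $\tau(fu_\gamma)$ by the level projections $\chi_{Z_sg}$: every term in which $\gamma$ moves the level $Z_sg$ to another level of the same tower vanishes, and the remaining levels have small total measure.

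With $c$ taken to be a positive integer, your argument proves the first assertion, since $[p]-c[1_A]\in\Kzero(A)$ approximates $\rho$. It does not prove the second assertion: that a given $\rho\ge 0$ is approximated by $\tau(p)$ for a single diagonal projection $p$. The paper uses exactly this form afterwards; the corollary following the lemma needs projections $p_n\in\mathrm{C}(\Omega)$ with $\tau(p_n)\approx\rho_n(\tau)$.

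The reduction ``replacing $f$ by $f+c$ and $\rho$ by $\rho+c$, we may take $f\ge 0$'' changes the target. You obtain a projection for $\rho+c$. You cannot then remove $c$ copies of the unit unless, tower by tower, the signed count $R=\sum_{\gamma\in\Gamma_s}\sum_k m_k\chi_{E_k}(x\gamma)$ attached to $f$ itself is not substantially negative. Nonnegativity of $\rho$ only controls integrals of $f$ against invariant measures, while $f$ may be very negative on large regions of $\Omega$. Your closing remark that no F{\o}lner averaging is needed is exactly where the argument falls short.

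\textbf{The missing ingredient} is the compactness argument in the paper's proof. Choose the towers so invariant that $\frac{1}{\abs{\Gamma_s}}\sum_{\gamma\in\Gamma_s}f(x\gamma)>-\eps$ for all $x\in Z_s$ and all $s$.
\begin{itemize}
\item Suppose no $(K,\delta)$ achieved this. Then there would be a F{\o}lner sequence $(\Gamma_n)$ and points $x_n$ with these averages $\le-\eps$.
\item Any weak$^*$ accumulation point $\mu$ of $\frac{1}{\abs{\Gamma_n}}\sum_{\gamma\in\Gamma_n}\delta_{x_n\gamma}$ is invariant, with $\mu(f)\le-\eps$.
\item Hence $\rho(\mu\circ\mathbb E)<\mu(f)+\eps/3<0$, a contradiction.
\end{itemize}

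Granting this, run your Step 2 with a signed step function $\tilde f=\frac1m\sum_k m_k\chi_{E_k}$, where $\norm{f-\tilde f}<\eps'$. Put $\max\{\lfloor R/m\rfloor,0\}$ copies on each piece. Since $R/m>-(\eps+\eps')\abs{\Gamma_s}$ there, and $\sum_s\abs{\Gamma_s}\mu(Z_s)=1$, the rounding error is at most $\sum_s\mu(Z_s)(1+(\eps+\eps')\abs{\Gamma_s})\le\max_s1/\abs{\Gamma_s}+\eps+\eps'$. This is the counterpart of \eqref{lbd-e}.

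\textbf{The finite-$\Gamma$ aside is wrong.} For finite $\Gamma$ the lemma is false: for the trivial group, $\tau(p)$ is integer-valued on point masses. The paper's proof genuinely uses $\abs{\Gamma}=\infty$, so you should drop that aside.
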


\begin{proof}
Let $\rho: \tr(A) \to \Real $ be a positive continuous affine function, and let $\eps>0$. By Kadison's representation theorem, there is a self-adjoint element $a \in A$ such that $$\rho(\tau) = \tau(a),\quad \tau \in \tr(A).$$

By Proposition \ref{urp} and the assumption that $\Omega$ is totally disconnected, there are a finite dimensional C*-algebra $C \subseteq A$ and a self-adjoint element  $a' \in C$ such that  $$\abs{\tau(a) - \tau(a')} < \eps/2, \quad \tau \in \tr(A).$$ Since $|\Gamma| = \infty$, the C*-algebra $C$ can be chosen such that the order of each minimal direct summand is at least $1/\eps$. Since $\rho$ is positive, one has $\tau(a') > -\eps/2$ for all $\tau\in \tr(A)$. Then, the following compactness argument shows that $C$ can be chosen such that 
\begin{equation}\label{lbd-e}
 \tau(a') > -\eps, \quad \tau \in \tr(C).
 \end{equation}
Otherwise, there would exist a F{\o}lner sequence $(\Gamma_n)$, a sequence $(x_n)  \subseteq \Omega$, finite dimensional C*-algebras $(C_n)$, and self-adjoint elements $a'_n \in C_n$ such that 
$$\tau_{x_n, \Gamma_n}(a'_n) \leq -\eps,$$
where $\tau_{x_n, \Gamma_n}$ is the probability measure
$$\frac{1}{\abs{\Gamma_n}}\sum_{\gamma \in \Gamma_n} \delta_{x_n \gamma}.$$

Note that the probability measures $ \tau_{x_n, \Gamma_n}$, $n=1, 2, ...$, have an accumulation point, say $\tau_\infty$, which is necessarily an invariant probability measure (so it induces a trace of $A$, which will still be denoted by $\tau_\infty$). Then, with $n$ sufficiently large, 
$$\tau_\infty(a) \approx_{\eps/2} \tau_\infty(a'_n) \approx_{\eps/4} \tau_{x_n, \Gamma_n}(a) \leq -\eps,$$
which implies that $\rho(\tau_\infty) = \tau_\infty(a) < 0$, a contradiction to the positivity of $\rho$.

Then, since $C$ is finite dimensional and such that the order of each minimal direct summand is at least $1/\eps$, by \eqref{lbd-e}, there is a projection $p \in C$ such that
$$\abs{\tau(p) - \tau(a')} < \eps, \quad \tau \in \tr(C),$$
and thus, $$ \abs{\tau(p) - \rho(\tau)} < 2\eps, \quad \tau \in \tr(A), $$
as desired.
\end{proof}

It is worth pointing out that the limit of each increasing sequence of strictly positive affine functions on $\mathrm{T}(A)$ arises as the rank function of an open subset of $\Omega$:
\begin{cor}[cf.~\cite{Thiel-sr1}]
Let $(\Omega, \Gamma)$ be a topological dynamical system which has the strong URP of Definition \ref{defn-S-URP} (i.e., Rokhlin towers form a partition of unity), and denote by $A$ the crossed product $\mathrm{C}(\Omega) \rtimes \Gamma$. Assume that $\Omega$ is totally disconnected.

Let $ \rho: \mathrm{T}(A) \to [0, +\infty)$ be a function which is the pointwise limit of an increasing sequence of strictly positive continuous affine function on $\mathrm{T}(A)$, and assume that $\norm{\rho}_\infty \leq 1$. Then there is an increasing sequence of projections $p_1 \leq p_2 \leq \cdots$ in $\mathrm{C}(\Omega)$ such that 
$$ \rho(\tau) = \lim_{n \to\infty}\tau(p_n),\quad \tau \in \mathrm{T}(A). $$ In other words, the affine function $\rho$ can be realized as the rank function of an open set of $\Omega$.
\end{cor}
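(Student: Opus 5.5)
Given $\rho$ with $\norm{\rho}_\infty\le 1$ that is the pointwise limit of an increasing sequence $\rho_1\le\rho_2\le\cdots$ of strictly positive continuous affine functions, I will build an increasing sequence of projections $p_n\in\mathrm{C}(\Omega)$ whose traces track the $\rho_k$ from below, with errors going to zero. The first move is a normalization. Replace $\rho_k$ by $\rho_k' := (1-2^{-k})\rho_k - 2^{-k}\eta_k$, where $\eta_k>0$ is chosen small. By compactness of $\mathrm{T}(A)$, each $\rho_k$ is bounded below by a positive constant, so this keeps $\rho_k'$ strictly positive. I would also like $\rho_k'$ strictly increasing in $k$ with a uniform gap on $\mathrm{T}(A)$, and strictly below $1$. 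Getting a uniform gap may require passing to a subsequence and using a Dini-type argument: since the increasing sequence converges pointwise, for any compact set and any strictly smaller continuous function there is eventually domination. With strict domination $\rho_k' + \theta_k < \rho_{k+1}'$ for constants $\theta_k>0$, the limit is still $\rho$.

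The main step is inductive. Suppose a projection $p_n\in\mathrm{C}(\Omega)$ satisfies $\rho_n' - \theta_n/4 < \tau(p_n) < \rho_n'(\tau)$ for all $\tau\in\mathrm{T}(A)$. I then need $p_{n+1}\ge p_n$ in $\mathrm{C}(\Omega)$ with the analogous estimate for $\rho_{n+1}'$. Apply Lemma \ref{dense-range} to the strictly positive function $\rho_{n+1}' - \tau(p_n) - \theta_{n+1}/8$; it is bounded below by roughly $\theta_n/2$, so positive. This produces a diagonal projection $e\in\mathrm{M}_\infty(\mathrm{C}(\Omega))$ whose trace approximates that function. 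Now $e$ is a projection in $\mathrm{M}_\infty(\mathrm{C}(\Omega))$, not in $\mathrm{C}(\Omega)$, and it has to be placed under $1-p_n$.

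Note that $\tau(1-p_n) > 1-\rho_n' > \tau(e)$ strictly for all $\tau$, because $\rho\le 1$ and the normalization keeps everything below $1$. So I would like to use the dynamical comparison in Proposition \ref{cos} (the $(1,1)$-(COS) with permutation unitaries in the totally disconnected case). To do this I first push $e$ into $\mathrm{C}(\Omega)$ itself. I would go back into the proof of Lemma \ref{dense-range}: the projection there comes from a finite-dimensional subalgebra $C$ of a Rokhlin partition, and the diagonal projections of $C$ live in $\mathrm{C}(\Omega)$. The cleanest route is to rerun that argument directly for the function relative to the clopen set $\Omega\setminus\mathrm{supp}(p_n)$. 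Inside each tower, the level sets in the complement of $p_n$ can be counted, and a union of enough of them gives a projection $e'\le 1-p_n$ with the right trace up to error. This works provided the towers are invariant enough that the counts are uniformly close to traces, using the same compactness/F{\o}lner-accumulation argument as in Lemma \ref{dense-range} and Proposition \ref{cos}. Then set $p_{n+1}=p_n+e'$.

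I expect this last point to be the main obstacle: producing the approximating projection \emph{inside} $1-p_n$ rather than in matrices. If the direct tower count becomes messy, my fallback is to take $e$ from Lemma \ref{dense-range}, use the second half of Lemma \ref{matrix-C-0} on a Rokhlin subalgebra, and realize $e\precsim 1-p_n$ by a permutation, which moves $e$ to a diagonal subprojection of $1-p_n$. This needs $e$ first compressed to a single matrix level, which the strict inequality $\tau(e)<\tau(1-p_n)$ together with (COS) should allow. Finally, $\tau(p_n)\to\rho(\tau)$ pointwise because $|\tau(p_n)-\rho_n'(\tau)|<\theta_n\to0$ and $\rho_n'\to\rho$. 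The open set is $U=\bigcup_n\mathrm{supp}(p_n)$, whose rank function is $\rho$ by monotone convergence.
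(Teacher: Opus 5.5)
Your proposal is correct and is essentially the paper's argument. Like the paper, you take projections from Lemma \ref{dense-range} approximating a strictly increasing sequence of continuous affine functions with uniform gaps, and nest them using the fiberwise rank comparison on a Rokhlin tower algebra (as in Proposition \ref{cos}) together with the permutation-unitary part of Lemma \ref{matrix-C-0}; the only difference is bookkeeping, since the paper conjugates $p_{n+1}$ so that $p_n \le u^*p_{n+1}u$ while you move the increment under $1-p_n$, and your fallback for getting that increment into $\mathrm{C}(\Omega)$ (a point the paper passes over) works by applying the permutation form of (COS) to the diagonal entries of $e$ one at a time against what remains of $1-p_n$.
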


\begin{proof}
Note there is a sequence of continuous affine functions $\rho_n: \tr(A) \to [0,  +\infty)$ such that $$\rho_n(\tau) < \rho(\tau), \quad \tau \in \tr(A)$$ and $$ \lim_{n \to \infty} \rho_n(\tau) = \rho(\tau),\quad \tau \in \tr(A). $$ Note that, since $\tr(A)$ is compact and $\rho_n$, $n=1, 2, ...$, are continuous, there are $\delta_n>0$, $n=1, 2, ..., $ such that 
$$ \rho_{n+1}(\tau) - \rho_n(\tau) > \delta_n, \quad n=1, 2, ... .$$ Without loss of generality, one may assume that $\delta_n < \delta_{n+1}$, $n=1, 2, ...$.

By Lemma \ref{dense-range}, there are projections $p_i \in \mathrm{C}(\Omega)$, $i=1, 2, ...$, such that
$$\abs{\rho_n(\tau) - \tau(p_n)} < \frac{1}{2}\delta_n,\quad n=1, 2, ... .$$ In particular, $$\tau(p_n) < \tau(p_{n+1}), \quad \tau \in \tr(A).$$
Then, for each $n$, by Proposition \ref{urp} (and the argument of Proposition \ref{cos}), there is a unital sub-C*-algebra $\mathrm{C}(\Omega) \subseteq C \subseteq \mathrm{C}(\Omega) \rtimes \Gamma$ such that
$$C = \bigoplus_{s = 1}^S \mathrm{M}_{\abs{\Gamma_s}}(\mathrm{C}(Z_s)),$$
with $\mathrm{C}(\Omega)$ the diagonal subalgebra and $\mathcal Z_s$, $s=1, ..., S$ zero dimensional, such that
$$\mathrm{rank}(p_n)(x) < \mathrm{rank}(p_{n+1})(x), \quad x \in Z_s,\ s=1, ..., S.$$
Then, by Lemma \ref{matrix-C-0}, there is a unitary $u \in A$ such that
$$u^*p_{n+1}u \in \mathrm{C}(\Omega) \quad \mathrm{and} \quad p_n \leq u^*p_{n+1}u.$$ Thus, upon replacing $p_n$ by a suitable conjugation, one may assume that $$p_1 \leq p_2 \leq \cdots,$$
as desired. 
\end{proof}

\begin{rem}
In the case $\mathrm{C}(\Omega) = l^\infty(\Gamma)$, write $p_n = \chi_{E_n}$ where $E_n \subseteq  \Gamma$. Then $E_1 \subseteq E_2 \subseteq \cdots$, and
$\rho(\tau) = \lim_{n \to\infty}\tau(\chi_{E_n})$. But, in general, $\rho(\tau) \neq \tau(\chi_{\bigcup_{n=1}^\infty E_n})$, as $\tau$ is not a normal state on $l^\infty(\Gamma)$.
\end{rem}

\begin{lem}\label{sep-subalgebra}
Let $(\Omega, \Gamma)$ be a topological dynamical system which has the strong URP of Definition \ref{defn-S-URP} (i.e., there are Rokhlin towers forming  a partition of unity), and assume that $\Omega$ is totally disconnected. Then, for any countable set $\mathcal F \subseteq \mathrm{C}(\Omega)$, there is a separable unital sub-C*-algebra $C=\mathrm{C}(\Omega') \subseteq \mathrm{C}(\Omega)$ which is invariant under the action of $\Gamma$ such that
\begin{enumerate}
\item $\mathcal F \subseteq \mathrm{C}(\Omega')$,
\item $\Omega'$ is totally disconnected,
\item $(\Omega', \Gamma)$ has the strong URP.
%\item $(\Omega', \Gamma)$ is minimal.
\end{enumerate}
\end{lem}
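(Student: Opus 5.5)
We build $\mathrm{C}(\Omega')$ as the closure of an increasing union $D_1 \subseteq D_2 \subseteq \cdots$ of countably generated, $\Gamma$-invariant unital sub-C*-algebras of $\mathrm{C}(\Omega)$. Each $D_k$ is generated by projections, and at stage $k$ we add the base projections of enough Rokhlin partitions to witness the strong URP. This is a standard L\"owenheim--Skolem style closing-off argument.

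\textbf{Bookkeeping.} Since $\Gamma$ is countable, we first fix an enumeration $(K_j,\eps_j)_{j\geq 1}$ of all pairs consisting of a finite set $K_j \subseteq \Gamma$ and $\eps_j = 1/m$, $m \in \mathbb N$. Since $\Omega$ is totally disconnected, $\mathrm{C}(\Omega)$ has real rank zero. Hence each $f \in \mathcal F$ is a norm limit of finite linear combinations of projections of $\mathrm{C}(\Omega)$, namely characteristic functions of clopen sets. We choose countably many such projections approximating each element of $\mathcal F$ and call this countable set $P_0$.

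\textbf{Inductive step.} Given a countable set $P_k$ of projections in $\mathrm{C}(\Omega)$, we take $D_k$ to be the unital C*-algebra generated by $\{\gamma(p): p \in P_k,\ \gamma\in\Gamma\}$. This is a countably generated commutative algebra, generated by projections, hence separable and $\Gamma$-invariant. To obtain $P_{k+1}$, we apply Proposition \ref{S-URP}, through Definition \ref{defn-S-URP}, to $(\Omega,\Gamma)$ for each pair $(K_j,\eps_j)$. This gives clopen towers $(p^{(j)}_s,\Gamma^{(j)}_s)$, $s=1,\dots,S_j$, with $(K_j,\eps_j)$-invariant shapes and
$$\sum_s\sum_{\gamma\in\Gamma^{(j)}_s}\gamma(p^{(j)}_s)=1.$$
We put all the projections $p^{(j)}_s$, for all $j$ and $s$, into $P_{k+1}$ together with $P_k$.

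\textbf{Conclusion.} Let $C$ be the closure of $\bigcup_k D_k$. Then $C$ is separable, unital, $\Gamma$-invariant, and contains $\mathcal F$, since it contains the approximating projections and is closed. Write $C = \mathrm{C}(\Omega')$.
\begin{itemize}
\item \emph{$\Omega'$ is totally disconnected.} $C$ is generated by projections, so it is an inductive limit of finite-dimensional commutative algebras, and hence its spectrum is totally disconnected.
\item \emph{$(\Omega',\Gamma)$ has the strong URP.} The projections $p^{(j)}_s$ lie in $C$. The relations expressing that $(p^{(j)}_s,\Gamma^{(j)}_s)$ are mutually disjoint towers (mutual orthogonality of the $\gamma(p^{(j)}_s)$) and that they sum to $1$ are algebraic identities. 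They continue to hold in $C$, and the action on $C$ is the restriction of the action on $\mathrm{C}(\Omega)$. So these projections give clopen towers in $\Omega'$ with the required invariance that partition $\Omega'$.
\end{itemize}

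There is essentially no obstacle. The one point to check is that the URP witnesses, which are found in $\mathrm{C}(\Omega)$, remain witnesses in the subalgebra. This holds because the defining conditions are purely algebraic relations among projections and the shapes $\Gamma_s$ do not depend on the space. In fact a single stage already suffices, since $P_1$ contains witnesses for every $(K,\eps)$. The induction is only needed if one also wants closure under further operations, for example adding minimality data, and can be dropped here.
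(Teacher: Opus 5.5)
Your proof is correct. It departs from the paper only in how total disconnectedness of $\Omega'$ is obtained.

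The paper takes as generators the Rokhlin base projections together with the translates $\gamma(f)$, $f\in\mathcal F$, themselves. The resulting algebra $C_0$ need not have totally disconnected spectrum: already $\mathrm{C}^*(1,f)$ can be isomorphic to $\mathrm{C}([0,1])$, e.g.\ when $f$ maps a Cantor set onto $[0,1]$. So the paper runs Blackadar's closing-off argument. It repeatedly adjoins countably many invertible self-adjoint elements of $\mathrm{C}(\Omega)$, together with their translates, approximating the self-adjoint part of $C_n$. It then concludes that $\overline{\bigcup_n C_n}$ has real rank zero.

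You avoid this by never using $f$ as a generator. Each $f$ is replaced by countably many clopen projections whose linear combinations converge to it. The algebra you get is generated by countably many commuting projections, so it is a commutative AF algebra with totally disconnected (metrizable) spectrum. It contains $\mathcal F$ because it is closed, and, as you observe, a single stage suffices.

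Your route is shorter, and it depends essentially on commutativity. The paper's iteration is the general mechanism for preserving real rank zero in separable subalgebras. It does not need commutativity and is the natural template when further properties must be closed off at the same time. Minimality, incidentally, needs no closing off: a factor of a minimal system is minimal, as the paper notes right after the lemma.

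A small bonus of enumerating all pairs $(K,1/m)$ is that you get witnesses for every $(K,\eps)$ directly. The paper instead goes through an exhausting sequence $K_1\subseteq K_2\subseteq\cdots$ with $\eps_i\to 0$.
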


\begin{proof}
The statement follows from the Blackadar argument.

Let $K_1 \subseteq K_2 \subseteq \cdots \subseteq \Gamma$ be a sequence of finite set such that $\bigcup_{i=1}^\infty K_i = \Gamma$, and let $\eps_1 > \eps_2 > \cdots $ be a sequence of positive numbers such that $\lim_{i\to\infty} \eps_i = 0$. Since $(\Omega, \Gamma)$ has the URP with Rokhlin towers forming a partition of unity, for each $i=1, 2, ...$, there are projection $p_{i, s} \in \mathrm{C}(\Omega)$, and finite sets $\Gamma_{i, s} \subseteq \Gamma$, $s = 1, ..., S_i$, such that
\begin{enumerate}
\item $\Gamma_{i, s}$, $s=1, ..., S_i$, are $(K_i, \eps_i)$-invariant, and
\item $\sum_{s=1}^{S_i} \sum_{\gamma \in \Gamma_{i, s}} \gamma(p_{i, s}) = 1_{\mathrm{C}(\Omega)}$. 
\end{enumerate}

Set $$C_0:=\textrm{C*}\{\gamma(p_{i, s}), \gamma(f): s=1, ..., S_i,\ i=1, 2, ..., f \in \mathcal F,\ \gamma \in \Gamma\} \subseteq \mathrm{C}(\Omega).$$ It is a separable sub-C*-algebra which is invariant under $\Gamma$ and contains $\{p_{i, s}: s=1, ..., S_i,\ i=1, 2, ...\}$ and $\mathcal F$.

Pick a countable set $\mathcal G_0 \subseteq C_0$ which is dense in the set of  self-adjoint elements of $C_0$. Since $\mathrm{C}(\Omega)$ has real rank zero, there is a countable set $\mathcal H_0 \subseteq \mathrm{C}(\Omega)$ consisting of invertible self-adjoint elements such that $\mathcal G_0 \subseteq \overline{\mathcal H_0}$. Consider the separable C*-algebra
$$C_1: = \textrm{C*}\{C_0, \gamma(\mathcal H_0): \gamma \in \Gamma\} \subseteq \mathrm{C}(\Omega).$$
Then the C*-algebra $C_1$ is invariant under the action of $\Gamma$, $C_0 \subseteq C_1$, and any self-adjoint element of $C_0$ can be approximated by invertible self-adjoint elements of $C_1$.

Repeating this process, one obtains a sequence of $\Gamma$-invariant separable unital sub-C*-algebras $$C_0 \subseteq C_1 \subseteq \cdots \subseteq \mathrm{C}(\Omega)$$
such that each self-adjoint element of $C_n$, $n=0, 1, ...$, can be approximated by invertible self-adjoint elements of $C_{n+1}$. Then the C*-algebra $$C:=\overline{\bigcup_{n=0}^\infty C_n}$$ is $\Gamma$-invariant, separable, and has real rank zero. 

It is clear that $\mathcal F \subseteq C$. Since for each $i=1, 2, ...$, $$\{p_{i, s}: s=1, ..., S_i\} \subseteq C,$$ the dynamical system $(C, \Gamma)$ has partitions by Rokhlin towers which are $(K_i, \eps_i)$-invariant, which is the strong URP.
\end{proof}

Now, let us turn our attention to minimal systems. Note that the quotient system $(\Omega', \Gamma)$ obtained above is minimal if $(\Omega, \Gamma)$ is minimal. Also recall from \cite{BKR-ADiv} that a unital C*-algebra $A$ is approximately divisible if for any finite set $\mathcal F \subseteq A$ and any $\eps>0$, there is a unital embedding $\phi: F \to A$, where $F=\mathrm{M}_2(\Comp) \oplus \mathrm{M}_3(\Comp)$, such that $$ \norm{a \phi(x) - \phi(x) a} < \eps,\quad a \in \mathcal F,\ x \in F,\ \norm{x} \leq 1.$$

\begin{thm}\label{min-alg}
Let $(\Omega, \Gamma)$ be a minimal topological dynamical system which has the strong URP of Definition \ref{defn-S-URP} (i.e., there are Rokhlin towers forming a partition of unity), and assume that $\Omega$ is totally disconnected. 

Then, for any countable set $\mathcal F \subseteq \mathrm{C}(\Omega) \rtimes \Gamma $, there is a unital %simple separable nuclear $\mathcal Z$-absorbing 
sub-C*-algebra $B \subseteq \mathrm{C}(\Omega) \rtimes \Gamma$ which is isomorphic to a simple unital $\mathcal Z$-absorbing AH algebra with real rank zero
%satisfies the UCT and has real rank zero 
such that $$\mathcal F \subseteq B.$$               
In particular, the C*-algebra $\mathrm{C}(\Omega) \rtimes \Gamma$ is approximately divisible.
\end{thm}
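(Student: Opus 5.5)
The plan is to reduce to a separable (metrizable) subsystem and then invoke the classification theorem for simple separable nuclear $\mathcal Z$-absorbing C*-algebras satisfying the UCT.

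\emph{Reduction.} Given a countable $\mathcal F \subseteq \mathrm{C}(\Omega)\rtimes\Gamma$, each element is a norm limit of finite sums $\sum f_\gamma u_\gamma$. Collect the countably many coefficients $f_\gamma \in \mathrm{C}(\Omega)$ into a countable set $\mathcal F_0$. Lemma \ref{sep-subalgebra} then gives a $\Gamma$-invariant separable unital $\mathrm{C}(\Omega') \subseteq \mathrm{C}(\Omega)$ containing $\mathcal F_0$. Here $\Omega'$ is totally disconnected and metrizable, and $(\Omega',\Gamma)$ has the strong URP. Since $\Omega'$ is a factor of $\Omega$, it is minimal. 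Set $B := \mathrm{C}(\Omega')\rtimes\Gamma$. Because $\Gamma$ is amenable, full and reduced crossed products agree, and the canonical map $B \to \mathrm{C}(\Omega)\rtimes\Gamma$ is injective (reduced crossed products are functorial for equivariant inclusions). So $B$ is a unital sub-C*-algebra containing $\mathcal F$.

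\emph{Properties of $B$.}
\begin{itemize}
\item By Remark \ref{tp-free}, $(\Omega',\Gamma)$ is topologically free, so $B$ is simple.
\item $B$ is separable, nuclear (amenable group, commutative coefficients) and satisfies the UCT by Tu's theorem on amenable groupoids.
\item $B$ has strict comparison by Proposition \ref{main-thm} applied to $(\Omega',\Gamma)$, using its minimal case for $\mathrm{M}_\infty$.
\item Lemma \ref{dense-range} shows that $\Kzero(B)$ has dense image in $\mathrm{Aff}(\tr(B))$.
\end{itemize}

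For $\mathcal Z$-stability I would cite Theorem 4.8 of \cite{Niu-MD-Z}, which applies to free minimal metrizable systems with URP and (COS). Freeness needs care: topological freeness may be enough, and otherwise I would ensure freeness through the choice of $\Omega'$. Alternatively, I would use Proposition \ref{urp} to verify tracial $\mathcal Z$-stability directly, since the approximating algebras are $\bigoplus \mathrm{M}_{n_s}(\mathrm{C}(Z_s))$ with large $n_s$ and zero-dimensional $Z_s$. Given $\mathcal Z$-stability, real rank zero follows from strict comparison together with the dense image of $\Kzero$ (the standard criterion for simple $\mathcal Z$-stable algebras: projections separate traces densely). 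The Elliott invariant of $B$ is then that of a simple unital real rank zero $\mathcal Z$-stable algebra, and $\Kzero(B)$ is a simple dimension group because of strict comparison, stable rank one and real rank zero. By the range-of-invariant results, some simple unital AH algebra with real rank zero has the same invariant. Classification then identifies $B$ with it.

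\emph{Approximate divisibility.} Approximate divisibility is local: given a finite $\mathcal F$, I place it inside such a $B$. A simple unital $\mathcal Z$-absorbing algebra is approximately divisible: since $B\cong B\otimes\mathcal Z$ and $\mathcal Z$ is strongly self-absorbing, there are unital embeddings of $\mathrm{M}_2\oplus\mathrm{M}_3$ into $\mathcal Z$ up to approximate centrality. In fact, a real rank zero AH algebra with slow dimension growth is approximately divisible directly. Taking the directed family of such $B$ over countable $\mathcal F$ gives the net.

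\emph{Main obstacle.} I expect the main obstacle to be $\mathcal Z$-absorption of $B$, in particular checking the hypotheses of Theorem 4.8 of \cite{Niu-MD-Z}, where freeness versus topological freeness of $(\Omega',\Gamma)$ must be handled. My first attempt is to add countably many clopen sets witnessing freeness of $(\Omega,\Gamma)$ into the Blackadar construction.
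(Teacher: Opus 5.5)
Your route is the paper's: reduce via Lemma \ref{sep-subalgebra}, get simplicity from Remark \ref{tp-free}, invoke Niu's $\mathcal Z$-stability theorem, obtain real rank zero from Lemma \ref{dense-range} and R\o rdam's criterion, then apply classification and \cite{EGL-ADiv}. The step you flag as the main obstacle, however, is not closed.

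\textbf{The $\mathcal Z$-stability step.} Freeness of $(\Omega,\Gamma)$ is not a hypothesis of the theorem. Minimality plus the strong URP only yields topological freeness (Remark \ref{tp-free}). Your concrete fix is to add to the Blackadar construction finite clopen partitions $U_1,\dots,U_k$ with $U_i\gamma\cap U_i=\emptyset$. That does make $(\Omega',\Gamma)$ free when $(\Omega,\Gamma)$ is free, so it handles the universal minimal set (free by \cite{Ellis_1960}), but not the statement as given. Your other two options are unsupported:
\begin{itemize}
\item ``Topological freeness may be enough'' is exactly what needs proof.
\item Tracial $\mathcal Z$-stability does not follow directly from Proposition \ref{urp}, which only places $hFh$ in $C$. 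You would need order-zero maps $\mathrm M_n\to A$ that approximately commute with $F$ itself and satisfy $1-\phi(1)\precsim a$ for arbitrary non-zero $a$. That is the substance of \cite{Niu-MD-Z-absorbing}.
\end{itemize}
The missing observation, which is how the paper closes this step, concerns where freeness enters \cite{Niu-MD-Z-absorbing}. It is not used in Proposition 3.1 or Lemma 4.1. Where it does enter (Corollary 3.2 and Theorem 4.8), it serves only to supply strict comparison of the crossed product. You already listed strict comparison of $B$, from Proposition \ref{main-thm} applied to the minimal system $(\Omega',\Gamma)$. So Theorem 4.8 of \cite{Niu-MD-Z-absorbing} gives $B\cong B\otimes\mathcal Z$ with no freeness needed.

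\textbf{Approximate divisibility.} Your first argument is wrong. $\mathcal Z$ is projectionless, so it contains no unital copy of $\mathrm M_2\oplus\mathrm M_3$ at all. Hence $\mathcal Z$-absorption does not imply approximate divisibility: $\mathcal Z$ itself is simple, unital and $\mathcal Z$-absorbing but not approximately divisible. Real rank zero is what is needed. Your fallback, that a simple unital real rank zero AH algebra without dimension growth is approximately divisible \cite{EGL-ADiv}, is the paper's argument. The passage to $\mathrm{C}(\Omega)\rtimes\Gamma$ is then local, as you say, because $1_B=1_A$.

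A minor slip: $\Kzero(B)$ need not be a dimension group, since it may have torsion. It is a simple weakly unperforated Riesz group, which is what the range results for real rank zero AH algebras cover, so your classification step is unaffected.
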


\begin{proof}
Approximating each element of $\mathcal F$ with elements in the algebraic crossed  product, one may assume that each element of $\mathcal F$ is a $\mathrm{C}(\Omega)$-valued function on $\Gamma$ with finite support. Then, denote by  $\mathcal F' \subseteq \mathrm{C}(\Omega)$ the countable set of all the coefficients of elements of $\mathcal F$ (i.e., their values as functions $\Gamma \to \mathrm{C}(\Omega)$). By Lemma \ref{sep-subalgebra}, there is a $\Gamma$-invariant separable sub-C*-algebra $\mathrm{C}(\Omega') \subseteq \mathrm{C}(\Omega)$ such that
$\mathcal F' \subseteq \mathrm{C}(\Omega'). $

Regarding $B:=\mathrm{C}(\Omega') \rtimes \Gamma$ as a sub-C*-algebra of $\mathrm{C}(\Omega) \rtimes \Gamma$, one then has $\mathcal F \subseteq B$. The C*-algebra $B$ is simple since $(\Omega', \Gamma)$ is minimal and has the strong URP (hence $(\mathrm{C}(\Omega'), \Gamma)$ is topologically free; see Remark \ref{tp-free}). It is also nuclear and satisfies the UCT since $\Gamma$ is amenable. 

Since $\Omega'$ is zero dimensional, the system $(\Omega', \Gamma)$ has zero mean dimension. It then follows from Theorem 4.8 of  \cite{Niu-MD-Z-absorbing} that $B \cong B \otimes \mathcal Z$. (The freeness assumption of Theorem 4.8 of  \cite{Niu-MD-Z-absorbing} actually is not necessary. This assumption is not used in the proof of Proposition 3.1 and Lemma 4.1 of \cite{Niu-MD-Z-absorbing}. When the freeness is used in Corollary 3.2 and Theorem 4.8, it is strict comparison that is needed. But, by Proposition \ref{main-thm}, strict comparison actually just follows from the URP.)

%By Theorem \ref{main-thm}, the C*-algebra $B$ has the strict comparison property. Together with Proposition 4.7 of \cite{Niu-MD-Z-absorbing}, this implies that the C*-algebra $B$ is tracially $\mathcal Z$-absorbing (the freeness assumption in Lemma 4.1 and Proposition 4.7 of \cite{Niu-MD-Z-absorbing} indeed are not necessary for the proofs). Since $B$ is nuclear, one has that $B$ is $\mathcal Z$-absorbing. 

Since $B$ is finite, it has stable rank one (\cite{Ror-Z-stable}). By Lemma \ref{dense-range}, the image of $\Kzero(B)$ is uniformly dense in $\mathrm{Aff}(\mathrm{T}(B))$, and hence, by Theorem 7.2 of \cite{Ror-Z-stable}, the C*-algebra $B$ has real rank zero.

Therefore, $B$ is a unital simple separable nuclear $\mathcal Z$-absorbing C*-algebra which satisfies the UCT. It is classified by the conventional Elliott invariant. Since $B$ has real rank zero, it is isomorphic to an AH algebra (without dimension growth), and therefore is approximately divisible (\cite{EGL-ADiv}). 
\end{proof}

\begin{cor}\label{AH-structure}
Let $(\Omega, \Gamma)$ be a minimal topological dynamical system which has the strong URP of Definition \ref{defn-S-URP} (i.e., there are Rokhlin towers forming a partition of unity), and assume that $\Omega$ is totally disconnected. 

Then there is an increasing net $A_\lambda$, $\lambda \in \Lambda$, of sub-C*-algebras of $A: = \mathrm{C}(\Omega) \rtimes \Gamma$ such that 
each $A_\lambda$, $\lambda \in \Lambda$, is a  (separable) simple $\mathcal Z$-absorbing AH algebra with real rank zero, and $$\bigcup_{\lambda \in \Lambda} A_\lambda  = A. $$
\end{cor}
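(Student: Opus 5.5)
The plan is to index the net by countable subsets of $A$ and use Theorem \ref{min-alg} to produce the subalgebras. The one point needing care is that the algebras must increase along the net. Theorem \ref{min-alg} alone does not give this, so I will build the algebras in a canonical, monotone way, following its proof.

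Let $\Lambda$ be the set of $\Gamma$-invariant separable unital sub-C*-algebras $\mathrm{C}(\Omega')\subseteq \mathrm{C}(\Omega)$ such that $\Omega'$ is totally disconnected and $(\Omega',\Gamma)$ has the strong URP. Order $\Lambda$ by inclusion. For $\lambda=\mathrm{C}(\Omega')\in\Lambda$, set $A_\lambda:=\mathrm{C}(\Omega')\rtimes\Gamma$, regarded as a sub-C*-algebra of $A$. Since $\Gamma$ is amenable, the full and reduced crossed products agree, so this inclusion is faithful. By construction $\lambda\le\lambda'$ implies $A_\lambda\subseteq A_{\lambda'}$.

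Exactly as in the proof of Theorem \ref{min-alg}, each $A_\lambda$ has the required properties:
\begin{itemize}
\item minimality passes to the factor $(\Omega',\Gamma)$, so by Remark \ref{tp-free} the system is topologically free and $A_\lambda$ is simple;
\item $A_\lambda$ is separable, nuclear and satisfies the UCT;
\item $A_\lambda$ is $\mathcal Z$-absorbing, by Theorem 4.8 of \cite{Niu-MD-Z-absorbing} together with Proposition \ref{main-thm};
\item $A_\lambda$ has real rank zero, by Lemma \ref{dense-range} and \cite{Ror-Z-stable};
\item $A_\lambda$ is AH, by classification.
\end{itemize}

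It remains to show that $\Lambda$ is directed and that the union of the $A_\lambda$ is all of $A$.

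\emph{Directedness.} Given $\lambda_1,\lambda_2\in\Lambda$, choose countable dense subsets $\mathcal F_1\subseteq\lambda_1$ and $\mathcal F_2\subseteq\lambda_2$. Apply Lemma \ref{sep-subalgebra} to $\mathcal F_1\cup\mathcal F_2$. It gives some $\lambda_3\in\Lambda$ containing $\mathcal F_1\cup\mathcal F_2$, hence containing $\lambda_1$ and $\lambda_2$.

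\emph{Exhaustion.} Let $a\in A$. Then $a$ lies in the closure of the algebraic crossed product, so $a=\lim_n a_n$ with each $a_n$ a finite sum $\sum f_{n,\gamma}u_\gamma$. The coefficients $f_{n,\gamma}$, over all $n$ and $\gamma$, form a countable set $\mathcal F'$. Lemma \ref{sep-subalgebra} yields $\lambda\in\Lambda$ containing $\mathcal F'$. Then every $a_n$ lies in $A_\lambda$, and since $A_\lambda$ is closed, $a\in A_\lambda$. Hence $\bigcup_\lambda A_\lambda=A$ with no closure needed.

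The main obstacle is the monotonicity. Choosing one $B$ per countable set through Theorem \ref{min-alg} would not give an increasing family. The fix is to index by the invariant real rank zero subalgebras $\mathrm{C}(\Omega')$ themselves, so that inclusion of indices directly gives inclusion of crossed products.

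The secondary point is the faithfulness of $\mathrm{C}(\Omega')\rtimes\Gamma\to\mathrm{C}(\Omega)\rtimes\Gamma$. It follows from amenability: the reduced crossed products embed compatibly through the canonical conditional expectations, and full and reduced coincide.
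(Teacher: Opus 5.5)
Your proposal is correct, but it resolves the one real issue, monotonicity, differently from the paper.

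\textbf{The paper's route.} The paper indexes the net by finite subsets $\lambda$ of $A$ and builds $A_\lambda$ by recursion on $\abs{\lambda}$. Once separable algebras $A_{\lambda'}$ have been chosen for all $\lambda'\subseteq\lambda$ with $\abs{\lambda'}=\abs{\lambda}-1$, it applies Theorem \ref{min-alg} to a countable dense subset of their finite union. This yields an $A_\lambda$ containing all of them. Monotonicity and $\lambda\subseteq A_\lambda$ are then checked inductively.

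\textbf{Your route.} You index by the $\Gamma$-invariant separable subalgebras $\mathrm{C}(\Omega')\subseteq\mathrm{C}(\Omega)$ with totally disconnected spectrum and the strong URP. Monotonicity is then automatic, from functoriality of the crossed product and its injectivity (via amenability). Directedness and exhaustion both reduce to Lemma \ref{sep-subalgebra}.

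\textbf{What each buys.} The paper uses Theorem \ref{min-alg} purely as a black box: any countable set lies in a subalgebra of the required type. Its argument is therefore a general principle converting such a separable-absorption property into an exhausting increasing net. The cost is a recursive choice construction. Its convention $A_{\emptyset}=\mathbb C 1_A$ is also a small glitch, since $\mathbb C$ is not $\mathcal Z$-absorbing.

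Your net is canonical and needs no recursion. It also gives extra structure: each $A_\lambda$ is itself the crossed product by $\Gamma$ of a metrizable minimal factor of $(\Omega,\Gamma)$, and these factors exhaust $\mathrm{C}(\Omega)$. The price is that you must reopen the proof of Theorem \ref{min-alg} rather than cite its statement. You have to observe that it uses only properties (1)--(3) of Lemma \ref{sep-subalgebra}, separability, and minimality of the factor, which is indeed the case. Your exhaustion step collects the coefficients of an entire approximating sequence, which makes explicit a point the paper's proof of Theorem \ref{min-alg} glosses over.
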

\begin{proof}
Denote by $\Lambda$ set of all finite subsets of $A$. It is an upward directed set under the inclusion. 

Put $A_{\O} = 1_A$.

For each $a \in A$, by Theorem \ref{min-alg}, there is a (separable) $\mathcal Z$-absorbing real rank zero AH algebra $A_a \subseteq A$ such that $a \in A_a$. Then, for each $\lambda \in \Lambda$ such that $\abs{\lambda} = 1$, define $A_\lambda = A_a$, where $\lambda = \{a\}$.

Assume that for some $n\in \mathbb N$, the (separable) C*-algebras $A_\lambda$ are define for all $\abs{\lambda} \leq n$ with the following properties: 
\begin{enumerate}
\item[(1)] $\lambda \subseteq A_\lambda$ and
\item[(2)] $A_{\lambda_1} \subseteq A_{\lambda_2}$ if $\lambda_1 \subseteq \lambda_2$.
\end{enumerate}

For each $\lambda \in \Lambda$ with $\abs{\lambda} = n+1$, consider the separable sub-C*-algebras $$A_{\lambda'},\quad \lambda' \subseteq \lambda,\ \abs{\lambda'} = n. $$
Note that there are only finitely many such $A_{\lambda'}$. By Theorem \ref{min-alg}, there is a (separable) $\mathcal Z$-absorbing real rank zero AH algebra $A_\lambda \subseteq A$ such that 
$$A_{\lambda'} \subseteq A_\lambda, \quad \lambda' \subseteq \lambda,\ \abs{\lambda'} = n.$$ 

Construct such $A_\lambda$ for all $\lambda \in \Lambda$ that $\abs{\lambda} = n+1$. Then, the collection of sub-C*-algebras $$A_\lambda, \quad \lambda \in \Lambda,\ \abs{\lambda} \leq n+1$$ satisfy 
\begin{enumerate}
\item[(1')] $\lambda \subseteq A_\lambda$ and
\item[(2')] $A_{\lambda_1} \subseteq A_{\lambda_2}$ if $\lambda_1 \subseteq \lambda_2$.
\end{enumerate}

Repeating this process, one obtains a net of (separable) $\mathcal Z$-absorbing real rank zero AH algebras $A_\lambda \subseteq A$, $\lambda \in \Lambda$, such that
\begin{enumerate}
\item[(1'')] $\lambda \subseteq A_\lambda$ and
\item[(2'')] $A_{\lambda_1} \subseteq A_{\lambda_2}$ if $\lambda_1 \subseteq \lambda_2$,
\end{enumerate}
which has the desired properties.
\end{proof}

\begin{rem}

If $(\Omega, \Gamma)$ is not minimal, the crossed product C*-algebra might fail to have stable rank one; see, for example, \cite{Poon-PAMS-89} and \cite{BNS-Cantor} for Cantor systems of $\Int$-actions. The uniform Roe algebra $A = l^\infty(\Gamma) \rtimes \Gamma$ also provides such examples: Assume $\Int \subseteq\Gamma$, and consider the projection $$p = \chi_{(-\infty, 0]} \in l^\infty(\Gamma). $$ To see the stable rank of $A$ is not $1$, it is enough to show that the stable rank of  the hereditary subalgebra $pAp$ is not $1$. Denote by $u$ the canonical unitary corresponding to $1 \in \Int$. Then the element 
$$ v := p u p$$
satisfies
$$vv^* = p(upu^*)p = p, \quad\mathrm{and} \quad v^*v = p(u^*pu)p =  u^*pu = \chi_{(-\infty, -1]}. $$ So, $v$ is an isometry. Its image in the quotient $pAp/(pAp\cap \mathcal K)$, where $\mathcal K = c_0(\Gamma) \rtimes \Gamma$,  is a unitary with non-zero index 
$$[p - vv^*]_0 - [ p - v^*v]_0 = -[\chi_{\{-1\}}]_0 \in \Kzero(\mathcal K) \cong \Int.$$
Thus, the stable rank of $pAp$ is not $1$. (Indeed, if $\Gamma  = \Int^d$, it is shown in \cite{LW-Roe} that the stable rank of $A$ is $2$.) (It is also shown in \cite{LW-Roe} that the real rank of $l^\infty(\Gamma) \rtimes \Gamma$ in general fails to be zero due to non-zero exponential maps.)
\end{rem}

\begin{rem}
What are the K-groups and the trace simplex of $\mathrm{C}(M) \rtimes \Gamma$? Note that, since $\mathrm{C}(M) \rtimes \Gamma$ has real rank zero, its trace simplex is canonically isomorphic to the state space of its order-unit $\Kzero$-group.
\end{rem}

\begin{rem}
Let $\Gamma_1$ and $\Gamma_2$ be discrete amenable groups, and consider their universal minimal sets $(M_1, \Gamma_1)$ and $(M_2, \Gamma_2)$. When are these dynamical systems topologically orbit equivalent (\cite{GPS-Cantor})? 
\end{rem}

\bibliographystyle{plainurl}
\bibliography{operator_algebras}

\end{document}